%%%%%%%%%%%%%%%%%%%%%%%%%%%%%%%%%%%%%%%%%%%%%%%%%%%%%%%%%%%%%%%%%%%%%%%%%%%%%%%%%%%%%%%%%%%%%%%%%%%%%%%%%%%%%%%%%%%%
%*******************************************************************************************************************
% BEGIN OF HEADER
%*******************************************************************************************************************
%%%%%%%%%%%%%%%%%%%%%%%%%%%%%%%%%%%%%%%%%%%%%%%%%%%%%%%%%%%%%%%%%%%%%%%%%%%%%%%%%%%%%%%%%%%%%%%%%%%%%%%%%%%%%%%%%%%%

%
% DOCUMENT PROPERTIES
%
\documentclass[
%preprint,
%draft,
10pt
]{article}

%
% PAGE GEOMETRY
%

\usepackage[%
left=1in,
right=1in,
bottom=1in,
top=1in,
footskip=0.3in,
%hoffset=20pt
]{geometry}
%
% COLORS
%
\usepackage{color}
\definecolor{LinkColor}{rgb}{0,0,1}
\definecolor{LinkColor2}{rgb}{1,0,0}
\definecolor{lbcolor}{rgb}{0.85,0.85,0.85}
\definecolor{FrameColor}{rgb}{0.85,0.85,0.85}

%
% FRAMES AND LINE NUMBERS
%

%\usepackage{showframe}\renewcommand*\ShowFrameColor{\color{FrameColor}}

\usepackage{lineno}

\newcommand*\patchAmsMathEnvironmentForLineno[1]{%
	\expandafter\let\csname old#1\expandafter\endcsname\csname #1\endcsname
	\expandafter\let\csname oldend#1\expandafter\endcsname\csname end#1\endcsname
	\renewenvironment{#1}%
	{\linenomath\csname old#1\endcsname}%
	{\csname oldend#1\endcsname\endlinenomath}}% 
\newcommand*\patchBothAmsMathEnvironmentsForLineno[1]{%
	\patchAmsMathEnvironmentForLineno{#1}%
	\patchAmsMathEnvironmentForLineno{#1*}}%
\AtBeginDocument{%
	\patchBothAmsMathEnvironmentsForLineno{equation}%
	\patchBothAmsMathEnvironmentsForLineno{align}%
	\patchBothAmsMathEnvironmentsForLineno{flalign}%
	\patchBothAmsMathEnvironmentsForLineno{alignat}%
	\patchBothAmsMathEnvironmentsForLineno{gather}%
	\patchBothAmsMathEnvironmentsForLineno{multline}%
}

%\linenumbers

%
% LANGUAGE AND FORMAT
%

\usepackage[ngerman, english]{babel}
\usepackage[utf8]{inputenc}
\usepackage{enumerate}
\usepackage[normalem]{ulem}
\usepackage{setspace}

\usepackage{multicol}
\usepackage[babel,french=guillemets,german=swiss]{csquotes}
\usepackage[center]{caption}
\usepackage{calligra}

%
% TABLES AND BOXES
%

\usepackage{array}
\usepackage{booktabs}
\usepackage{framed}
\usepackage{rotating}
\usepackage{longtable}
\usepackage{multirow}
\usepackage{tabularx}
\newcolumntype{L}[1]{>{\raggedright\arraybackslash}p{#1}} % linksb?ndig mit Breitenangabe
\newcolumntype{C}[1]{>{\centering\arraybackslash}p{#1}} % zentriert mit Breitenangabe
\newcolumntype{R}[1]{>{\raggedleft\arraybackslash}p{#1}} % rechtsb?ndig mit Breitenangabe

\setlength{\fboxrule}{1.5pt}

%
% GRAPHICS
%

\usepackage{graphicx}
\usepackage{placeins}
%\captionsetup[figure]{font=scriptsize,labelfont=scriptsize}

%
% MATHEMATISCHE DARSTELLUNGEN
%
\usepackage{amsmath}
\usepackage{amssymb}
\usepackage{dsfont}
\usepackage{nicefrac}
\usepackage{empheq}
\allowdisplaybreaks
\usepackage{amsthm}
\usepackage{upgreek}

%\usepackage{showlabels}

%
% PDF EIGENSCHAFTEN
%

\usepackage[%
pdftitle={Titel},% 
pdfauthor={Autor},% 
pdfcreator={LaTeX, LaTeX with hyperref and KOMA-Script},% Genutzte Programme
pdfsubject={Betreff}, % Betreff
pdfkeywords={Keywords}
]{hyperref} 

\hypersetup{%
	colorlinks	=true,% Definition der Links im PDF File
	linkcolor	=LinkColor,%
	anchorcolor	=LinkColor,%
	citecolor	=LinkColor2,%
	filecolor	=LinkColor,%
	menucolor	=LinkColor,%
	pagecolor	=LinkColor,%
	urlcolor	=LinkColor,%
}

%
% THEOREME
%

\newtheorem{theorem}{Theorem}
\newtheorem{lemma}[theorem]{Lemma}
\newtheorem{proposition}[theorem]{Proposition}
\newtheorem{corollary}[theorem]{Corollary}
\newtheorem{definition}[theorem]{Definition}
\newtheorem{remark}[theorem]{Remark}
\newtheorem{example}[theorem]{Example}

\makeatletter
\renewenvironment{proof}[1][\proofname]{%
	\par\pushQED{\qed}\normalfont%
	\topsep6\p@\@plus6\p@\relax
	\trivlist\item[\hskip\labelsep\bfseries#1\@addpunct{.}]%
	\ignorespaces
}{%
	\popQED\endtrivlist\@endpefalse
}
\makeatother

\makeatletter
\renewcommand\paragraph{\@startsection{paragraph}{4}{\z@}%
	{1ex \@plus1ex \@minus.2ex}%
	{-1em}%
	{\normalfont\normalsize\itshape}}
\renewcommand\subparagraph{\@startsection{paragraph}{4}{\z@}%
	{1ex \@plus1ex \@minus.2ex}%
	{-1em}%
	{\normalfont\normalsize\itshape}}
\makeatother

%
%	MACROS
%

\newcommand{\norm}[1]{\ensuremath\lVert #1 \rVert}
\newcommand{\norml}[2]{\ensuremath\lVert #2 \rVert_{L^{#1}}}
\newcommand{\normL}[2]{\ensuremath\lVert #2 \rVert_{\L^{#1}}}
\newcommand{\normw}[3]{\ensuremath\lVert #3 \rVert_{W^{#1,#2}}}

\newcommand{\normh}[2]{\ensuremath\lVert #2 \rVert_{H^{#1}}}
\newcommand{\normH}[2]{\ensuremath\lVert #2 \rVert_{\H^{#1}}}

%
% MATHEMATISCHE DEFINITIONEN
%

\def\A{\mathcal A}
\def\B{\mathcal B}
\def\S{\mathcal S}
\def\R{\mathbb R}
\def\CR{\mathcal R}
\def\C{\mathcal C}
\def\N{\mathbb N}
\def\n{\mathbf n}

\def\U{\mathbb U}

\def\UR{\mathbb{U}_R}

\def\u{{\bar u}}
\def\L{\mathbf L}
\def\H{\mathbf H}
\def\h{\mathds{h}}
\def\v{\mathbf{v}}
\def\w{\mathbf{w}}
\def\T{\mathbf{T}}
\def\I{\mathbf{I}}
\def\D{{\mathbf{D}}}

\def\V{{\mathcal V}}

\def\F{{\mathbf{F}}}
\def\G{{\mathbf{G}}}

\def\tu{{\tilde u}}
\def\tv{{\tilde\v}}
\def\tvarphi{{\tilde\varphi}}
\def\tmu{{\tilde\mu}}
\def\tsigma{{\tilde\sigma}}
\def\tp{{\tilde p}}

\def\tw{{\tilde\w}}
\def\tphi{{\tilde\phi}}
\def\ttau{{\tilde\tau}}
\def\trho{{\tilde\rho}}

\def\OmegaT{{\Omega_T}}
\def\GammaT{{\Gamma_T}}

\def\intO{\int_\Omega}
\def\intOT{\int_\OmegaT}

\def\dx{\;\mathrm dx}

\def\dt{\;\mathrm dt}
\def\ds{\;\mathrm ds}
\def\dxt{\;\mathrm d(x,t)}

\def\dS{\;\mathrm dS}

\def\del{\partial}

\def\delt{\partial_{t}}

\def\deln{\partial_\n}

\def\d{{\mathrm{d}}}

\def\grad{\nabla}
\def\laplace{\Delta}
\def\divergence{\textnormal{div}}

\def\sminus{\hspace{-2pt}-\hspace{-2pt}}
\def\scdot{\hspace{-2pt}\cdot\hspace{-2pt}}
\def\scolon{\hspace{-2pt}:\hspace{-2pt}}

\def\tand{\quad\text{and}\quad}
\def\twith{\quad\text{with}\quad}

\def\scdot{\hspace{-2pt}\cdot\hspace{-2pt}}

\def\wto{\rightharpoonup}

\def\itema{\item[\textnormal{(a)}]}
\def\itemb{\item[\textnormal{(b)}]}
\def\itemc{\item[\textnormal{(c)}]}
\def\itemd{\item[\textnormal{(d)}]}

\def\itemi{\item[\textnormal{(i)}]}
\def\itemii{\item[\textnormal{(ii)}]}

\def\blk{\color{black}}

%%%%%%%%%%%%%%%%%%%%%%%%%%%%%%%%%%%%%%%%%%%%%%%%%%%%%%%%%%%%%%%%%%%%%%%%%%%%%%%%%%%%%%%%%%%%%%%%%%%%%%%%%%%%%%%%%%%%
%*******************************************************************************************************************
% END OF HEADER
%*******************************************************************************************************************
%%%%%%%%%%%%%%%%%%%%%%%%%%%%%%%%%%%%%%%%%%%%%%%%%%%%%%%%%%%%%%%%%%%%%%%%%%%%%%%%%%%%%%%%%%%%%%%%%%%%%%%%%%%%%%%%%%%%

%%%%%%%%%%%%%%%%%%%%%%%%%%%%%%%%%%%%%%%%%%%%%%%%%%%%%%%%%%%%%%%%%%%%%%%%%%%%%%%%%%%%%%%%%%%%%%%%%%%%%%%%%%%%%%%%%%%%
%*******************************************************************************************************************
% BEGIN OF DOCUMENT
%*******************************************************************************************************************
%%%%%%%%%%%%%%%%%%%%%%%%%%%%%%%%%%%%%%%%%%%%%%%%%%%%%%%%%%%%%%%%%%%%%%%%%%%%%%%%%%%%%%%%%%%%%%%%%%%%%%%%%%%%%%%%%%%%

\begin{document}
	
%
%	TITLE PAGE
%
	
\begin{center}	
	\LARGE{Optimal control theory and advanced optimality conditions\\ for a diffuse interface model of tumor growth}
\end{center}

\bigskip

\begin{center}	
	\normalsize{Matthias Ebenbeck and Patrik Knopf}\\[1mm]
	\textit{Department of Mathematics, University of Regensburg, 93053 Regensburg, Germany}\\[1mm]
	\texttt{Matthias.Ebenbeck@ur.de, Patrik.Knopf@ur.de}\\[5mm]
	\textit{This is a preprint version of the paper. Please cite as:}\\
	M. Ebenbeck and P. Knopf,
	ESAIM Control Optim.~Calc.~Var. 26(71):38, 2020.\\
	\url{https://doi.org/10.1051/cocv/2019059}
\end{center}

\vfill

\begin{abstract}
	We investigate a distributed optimal control problem for a diffuse interface model for tumor growth. The model consists of a Cahn--Hilliard type equation for the phase field variable, a reaction diffusion equation for the nutrient concentration and a Brinkman type equation for the velocity field. These PDEs are endowed with homogeneous Neumann boundary conditions for the phase field variable, the chemical potential and the nutrient as well as a \grqq no-friction" boundary condition for the velocity. The control represents a medication by cytotoxic drugs and enters the phase field equation. The aim is to minimize a cost functional of standard tracking type that is designed to track the phase field variable during the time evolution and at some fixed final time. We show that our model satisfies the basics for calculus of variations and we present first-order and second-order conditions for local optimality. Moreover, we present a globality condition for critical controls and we show that the optimal control is unique on small time intervals. \\[1ex]

	\noindent\textit{Keywords:} Optimal control with PDEs, calculus of variations, tumor growth, Cahn--Hilliard equation, Brinkman equation, first-order necessary optimality conditions, second-order sufficient optimality conditions, uniqueness of globally optimal solutions.\\[1ex]
	
	\noindent\textit{MSC Classification:} 35K61, 76D07, 49J20, 92C50.
\end{abstract}

\vfill

%
%	INTRODUCTION
%

\section{Introduction}
The evolution of cancer cells is influenced by a variety of biological mechanisms like, e.g., cell-cell adhesion or mechanical stresses, see \cite{BearerEtAl}. Although cancer is one of the most common causes of death, the knowledge about underlying processes is still at an unsatisfying level. Due to the complexity of the evolution, experimental and medical research may not be sufficient to predict growth and to establish general treatment strategies. Thus, it is of high importance to develop biologically realistic and predictive mathematical models to identify the influence of different growth factors and mechanisms and to set up individual treatment. \\[1ex]
In the recent past, diffuse interface models gained much interest (e.g., \cite{FrigeriGrasselliRocca,GarckeLamSitkaStyles,HawkinsZeeKristofferOdenTinsley,HilhorstKampmannNguyenZee}) and some of them seem to compare well with clinical data, see \cite{AgostiEtAl,BearerEtAl,FrieboesEtAl}. Typically, these models are derived from balance equations for mass and momentum and they incorporate exchange of mass and momentum between the phases. Mechanisms like chemotaxis, necrosis, angiogenesis and apoptosis can be included or effects due to viscoelasticity and stress, see e.g. \cite{ChristiniLiLowengrubWise,GarckeLamNuernbergSitka,OdenTinsleyHawkins}.\\[1ex]
%Since living cells behave both as elastic solids and viscous fluids, it seems to be appropriate to descibe the mixture as a viscoelastic fluid, see \cite{PerthameVauchelet,ZhengWiseCristini}. The relaxation times of various viscoelastic fluids (see \cite{Fung}) are quite small compared to the tumor doubling timescale, Stokes-like models can be used to approximate those viscoelastic materials, see \cite{FranksKing}. Moreover, the extra-cellular matrix (ECM) is often modelled as a porous medium and Brinkman's law can be used to describe the velocity field, including effects near the pores of the ECM due to the flow of the fluid and viscous effects in the fluid bulk away from the pores, see \cite{SrinivasanRajagopal}.\\[1ex]

\pagebreak[4]
\noindent In general, living biological tissues behave like viscoelastic fluids, see \cite{Chaplain}. Since relaxation times of elastic materials are rather short (see \cite{Fung}), it is reasonable to consider Stokes flow as an approximation of those tissues, see e.g., \cite{FranksKing}. Indeed, many authors used Stokes flow to describe the tumor as a viscous fluid, see \cite{FranksKing2,Friedman}. In classical tumor growth models, velocities are modeled with the help of Darcy's law. In these models the velocity is assumed to be proportional to the pressure gradient caused by the birth of new cells and by the deformation of the tissue, see \cite{ByrneChaplain2,Grennspan}. Brinkman's law is an interpolation between the viscous fluid and the Darcy-type models, see e.g., \cite{PerthameVauchelet,ZhengWiseCristini}.\\[1ex]

\noindent\textbf{Introduction of the model.}\newline
In this paper, we consider the following model: Let $\Omega\subset \R^d$ with $d=2,3,$ be a bounded domain. For a fixed final time $T>0$, we write $\OmegaT:=\Omega\times(0,T)$. By $\n $ we denote the outer unit normal on $\del\Omega$ and $\deln g \coloneqq \grad g\cdot \n $ denotes the outward normal derivative of the function $g$ on $\Gamma$. Our state system is given by
\begin{subequations}
	\label{EQ:CHB}
	\begin{empheq}[left=\text{(CHB)}\empheqlbrace]{align}
		\label{EQ:CHB1}
		\divergence(\v) &= (P\sigma-A)\h(\varphi) &&\text{in}\; \OmegaT,\\
		\label{EQ:CHB2}
		-\divergence(\T(\v,p)) + \nu\v &= (\mu+\chi\sigma)\grad\varphi &&\text{in}\; \OmegaT,\\
		\label{EQ:CHB3}
		\delt\varphi + \divergence(\varphi\v) &= m\laplace\mu + (P\sigma-A-u)\h(\varphi) &&\text{in}\; \OmegaT,\\
		\label{EQ:CHB4}
		\mu &= -\epsilon\laplace\varphi + \epsilon^{-1}\psi'(\varphi) -\chi\sigma &&\text{in} \; \OmegaT,\\
		\label{EQ:CHB5}
		-\laplace\sigma + \h(\varphi)\sigma &= b(\sigma_B-\sigma) &&\text{in}\; \OmegaT,\\[2mm]
		\label{EQ:CHB6}
		\deln\mu=\deln\varphi=\deln\sigma &= 0 &&\text{in}\; \GammaT,\\
		\label{EQ:CHB7}
		\T(\v,p)\n &= 0 &&\text{in}\; \GammaT,\\[2mm]
		\label{EQ:CHB8}
		\varphi(0) &= \varphi_0	&&\text{in}\; \Omega,
	\end{empheq}
\end{subequations}
where the viscous stress tensor is defined by
\begin{equation}
\label{definition_stress_tensor}\T(\v,p) \coloneqq 2\eta \D\v+\lambda\divergence(\v)\I  - p\I ,
\end{equation}
and the symmetric velocity gradient is given by
\begin{equation*}
\D\v\coloneqq \tfrac{1}{2}(\grad\v+\grad\v^T).
\end{equation*}
In \eqref{EQ:CHB}, we denote by $\varphi$ the difference of volume fractions of tumor tissue and healthy tissue where $\{x\in\Omega:\varphi(x) = 1\}$ represents the region of unmixed tumor tissue, $\{x\in\Omega:\varphi(x) = -1\}$ stands for the surrounding healthy tissue. The volume-averaged velocity of the mixture and the pressure are denoted by $\v$ and $p$, respectively. The tumor consumes an unknown species $\sigma\in [0,1]$ acting as a nutrient like e.g. oxygen or glucose. Furthermore, $\mu$ denotes the chemical potential associated with $\varphi$. The mobility is represented by a positive constant $m$, the diffuse interface thickness is proportional to a small parameter $\epsilon>0$ and the constant $\nu>0$ is related to the fluid porosity. The shear and bulk viscosities are given by a positive constant $\eta$ and a non-negative constant $\lambda$, respectively. The non-negative constants $P$, $A$ and $\chi$ represent the proliferation rate, the apoptosis rate and the chemotaxis parameter, respectively. By $\sigma_B$, we denote the nutrient concentration in a preexisting vasculature and $b$ is a positive constant. Hence, the term $b(\sigma_B-\sigma)$ models the nutrient supply from the blood vessels if $\sigma_B>\sigma$ and the nutrient transport away from the domain for $\sigma_B<\sigma$. The term $-u\h(\varphi)$ in \eqref{EQ:CHB3} models the elimination of tumor cells by cytotoxic drugs and the function $u$ will act as our control. 
%This specific control term has been investigated in \cite{GarckeLamRocca} where a simpler model was studied in which the influence of the velocity $\v$ is neglected. \red Verweis auf unser Paper? \blk 
Since it does not play any role in the analysis, we set $\epsilon = 1$.
\\[1ex]
We investigate the following distributed optimal control problem:
\begin{align}
\notag\text{Minimize}\quad I(\varphi,\mu,\sigma,\v,p,u)&:= 
\frac{\upalpha_0} 2 \norml{2}{\varphi(T)-\varphi_f}^2
+ \frac{\upalpha_1} 2 \norm{\varphi-\varphi_d}_{L^2(\Omega_T)}^2
%+ \frac{\upalpha_2} 2\norm{\mu-\mu_d}_{L^2(\Omega_T)}^2 \\
%&\qquad + \frac{\upalpha_3} 2\norm{\sigma-\sigma_d}_{L^2(\Omega_T)}^2
%+ \frac{\upalpha_4} 2\norm{\v-\v_d}_{L^2(\Omega_T)}^2
+ \frac{\kappa} 2\norm{u}_{L^2(\Omega_T)}^2
\end{align}
subject to the control constraint
\begin{align}
u\in \U:=\big\{ u\in L^2(L^2) \;\big\vert\; \bar{a}(x,t) \le u(x,t) \le \bar{b}(x,t) \;\;\text{for almost every}\; (x,t)\in\OmegaT \big\}
\end{align}
for box-restrictions $\bar{a},\bar{b}\in L^2(L^2)$ and the state system \eqref{EQ:CHB}. Here, $\upalpha_0,\upalpha_1$ and $\kappa$ are nonnegative constants. \\[1ex] 
The optimal control problem can be interpreted as the search for a strategy how to supply a medication such that a desired evolution $\varphi_d$ and a therapeutic target $\varphi_f$ are achieved in the best possible way without causing harm to the patient (expressed by both the control constraint and the last term in the cost functional).
%\begin{itemize}
%	\itemi a desired evolution $\varphi_d$ of the tumor cells is realized as good as possible;
%	\itemii a therapeutic target (expressed by the final distribution $\varphi_f$) is achieved in the best possible way;
%	\itemiii the amount of supplied drugs does not cause harm to the patient (expressed by both the control constraint and the last term in the cost functional).
%\end{itemize}
The ratio between the parameters $\upalpha_0$, $\upalpha_1$ and $\kappa$ can be adjusted according to the importance of the individual therapeutic targets.\\[1ex] 
In the case when $\h(-1)=0$, the term $-u\h(\varphi)$ models the elimination of tumor cells by a supply of cytotoxic drugs represented by the control $u$. This specific control term has been investigated in \cite{EbenbeckKnopf} and also in \cite{GarckeLamRocca} where a simpler model was studied in which the influence of the velocity $\v$ is neglected. \blk However, in some situations it may be more reasonable to control, for instance, the evolution at the interface and one has to use a different form for $\h(\cdot)$, see Remark \ref{REM:PSI_H} below. Therefore, we allow $\h(\cdot)$ to be rather general. \\[1ex]
%
%\noindent\textbf{Why is Brinkman's law reasonable to describe the mixture velocity?}\newline
%
\noindent\textbf{Summary of our main results.}\newline
In Section 3, we prove the existence of a control-to-state operator that maps any admissible control $u\in\U$ onto a corresponding unique strong solution of the state equation \eqref{EQ:CHB}. Furthermore, we show that this control-to-state operator is Lipschitz-continuous, Fréchet differentiable and satisfies a weak compactness property. In particular, we establish the fundamental requirements for calculus of variations. \\[1ex] 
In Section 4, we investigate the adjoint system. Its solution, that is called the adjoint state or the costate, is an important tool in optimal control theory as it provides a better description of optimality conditions. We prove the existence of a control-to-costate operator which maps any admissible control onto its corresponding adjoint state. Then, we show that this control-to-costate operator is Lipschitz continuous and Fréchet differentiable. \\[1ex]
Eventually, in Section 5, we investigate the above optimal control problem. First, we show that there exists at least one globally optimal solution. After that, we establish first-order necessary conditions for local optimality. These conditions are of great importance for possible numerical implementations as they provide the foundation for many computational optimization methods. We also present a second-order sufficient condition for strict local optimality, a globality criterion for critical controls and a uniqueness result for the optimal control on small time intervals. \\[1ex]
\noindent\textbf{Comparison with the results established in \cite{EbenbeckKnopf}.}\newline
We want to outline the main features and novelties of our work, especially compared to the results in \cite{EbenbeckKnopf}. The main difference lies in the equation for the nutrient concentration. In \cite{EbenbeckKnopf} the equation
\begin{equation}
\label{EQ:EBKN}
-\laplace\sigma + \h(\varphi)\sigma = 0\quad\text{in }\Omega_T,\qquad \deln\sigma = K(1-\sigma)\quad\text{on }\GammaT
\end{equation}
was used where $K$ denotes some positive boundary permeability constant. However, in this paper, we use equation \eqref{EQ:CHB5} endowed with a homogeneous Neumann boundary condition (as studied in \cite{GarckeLamRocca}) to describe the nutrient distribution. \\[1ex]
Apart from the fact that \eqref{EQ:CHB5} might be more reasonable in some situations from a modeling point of view, it provides some advantages with regard to analysis and optimal control theory. In particular, Lipschitz continuity and Fréchet differentiability of both the control-to-state operator and the control-to-costate operator can be established in much better function spaces. As a consequence, we can prove that the cost functional $J$ is twice continuously differentiable which was not possible in \cite{EbenbeckKnopf}. This enables us to establish second-order conditions for (strict) local optimality.\\[1ex]
Finally, we want to comment on the fact that active transport is neglected in \eqref{EQ:EBKN}. In principle, it is possible to decouple the effects of chemotaxis and active transport, see, e.g., \cite{GarckeLamSitkaStyles}. If the ratio between nutrient diffusion and active transport timescale is quite small, a non-dimensionalization argument together with the decoupling justifies the fact that active transport may be neglected, cf. \cite{GarckeLam3}. \\[1ex]
Moreover, as mentioned above, we present a globality criterion for critical controls as well as a uniqueness result for the optimal control on small time intervals. These new conditions have neither been established in \cite{EbenbeckKnopf} nor for related models in the literature. However, we are convinced that analogous conditions could also be proved for the optimal control problem in \cite{EbenbeckKnopf}.\\[1ex]
\noindent\textbf{Related results in the literature.}\newline
Finally, we want to mention further works where optimal control problems for tumor models are studied.
Results on optimal control problems for tumor models based on ODEs are investigated in \cite{SchaettlerLedzewicz2,Oke,SchaettlerLedzewicz,Swan}. In the context of PDE-based control problems we refer to \cite{Benosman} where a tumor growth model of advection-reaction-diffusion type is considered.
There are various papers analyzing optimal control problems for Cahn--Hilliard equations (e.g., \cite{ColliFarshbaf,HintermuellerWegner,Signori,Signori2,Signori3}). Furthermore, control problems for the convective Cahn--Hilliard equation where the control acts as a velocity were investigated in \cite{ColliGilardiSprekels,GilardiSprekels,ZhaoLiu1,ZhaoLiu2} whereas in \cite{Biswas,FrigeriRoccaSprekels} the control enters in the momentum equation of a Cahn--Hilliard--Navier--Stokes system. As far as control problems for Cahn--Hilliard-based models for tumor growth are considered, there are only a few contributions where an equation for the nutrient is included in the system. In \cite{ColliGilardiRoccaSprekels}, the authors investigated an optimal control problem consisting of a Cahn--Hilliard-type equation coupled to a time-dependent reaction-diffusion equation for the nutrient, where the control acts as a right-hand side in this nutrient equation. The model they considered was firstly proposed in \cite{HawkinsZeeKristofferOdenTinsley} and later well-posedness and existence of strong solutions were established in \cite{FrigeriGrasselliRocca}. However, effects due to velocity are not included in their model and mass conservation holds for the sum of tumor and nutrient concentrations. A similar model has been analyzed in \cite{Cavaterra} both from the viewpoint of optimal long-term and finite-time treatment of medication. Furthermore, we want to cite the paper \cite{GarckeLamRocca} about an optimal control problem of treatment time where the control represents a medication of cytotoxic drugs and enters the phase field equation in the same way as ours. Although their nutrient equation is non-stationary, some of the major difficulties do not occur since the velocity is assumed to be negligible ($\v=\mathbf 0$). Lastly, we want to mention the work \cite{Sprekels-Wu}, where the authors tackled an optimal control problem for a Cahn--Hilliard--Darcy model describing tumor growth in two space dimensions. However, this model neglects the influence of nutrients and poses a compatibility condition for the source term in the divergence equation resulting from a different boundary condition for the velocity. In particular, the source term does not depend on additional variables such as $\varphi$. \\[1ex]

\section{Preliminaries}
At first, we want to fix some notation: For any (real) Banach space $X$, its corresponding norm is denoted by $\norm{\,\cdot\,}_X$. We write $X^\ast$ to denote the dual space of $X$ and $\langle \cdot{,}\cdot \rangle_X$ to denote duality pairing between $X^\ast$ and $X$. If $X$ is endowed with an inner product, this product is denoted by $(\cdot{,}\cdot)_X$. The scalar product of two matrices is defined by
\begin{equation*}
\mathbf{A}:\mathbf{B}\coloneqq \sum_{j,k=1}^{d}a_{jk}b_{jk}\quad\text{for } \mathbf{A}=(a_{ij})_{1\le i,j\le d},\; \mathbf{B}=(b_{ij})_{1\le i,j\le d} \in\R^{d\times d}.
\end{equation*}
For the standard Lebesgue and Sobolev spaces with $1\leq p\leq \infty$ and $k>0$, we use the notation $L^p\coloneqq L^p(\Omega)$ and $W^{k,p}\coloneqq W^{k,p}(\Omega)$. Their corresponding norms are denoted by $\norml{p}{\,\cdot\,}$ and $\normw{k}{p}{\,\cdot\,}$. If $p=2$ we write $H^k\coloneqq W^{k,2}$ and $\normh{k}{\,\cdot\,} \coloneqq \normw{k}{2}{\,\cdot\,}$. Sometimes, we will also write $\norm{\,\cdot\,}_p$ instead of $\norm{\,\cdot\,}_{L^p(L^p)}$. Moreover, we write $\mathbf{L}^p$, $\mathbf{W}^{k,p}$ and $\mathbf{H}^k$ to denote the corresponding spaces of vector or matrix valued functions. For Bochner spaces, we use the notation $L^p(X)\coloneqq L^p(0,T;X)$ for any Banach space $X$ and $p\in [1,\infty]$.  For the dual space $X^\ast$ of a Banach space $X$, we define the (generalized) mean value by 
\begin{equation*}
v_{\Omega}\coloneqq \frac{1}{|\Omega|}\intO v\dx\quad\text{for } v\in L^1, \quad v_{\Omega}^\ast\coloneqq \frac{1}{|\Omega|}\langle v{,}1\rangle_X\quad\text{for } v\in X^\ast.
\end{equation*}
Furthermore, we define the following function spaces:
\begin{equation*}
L_0^2\coloneqq \{w\in L^2\colon w_{\Omega}=0\},\quad (H^1)_0^\ast\coloneqq \{f\in (H^1)^\ast\colon f_{\Omega}^\ast =0\},\quad H_\n^2\coloneqq \{w\in H^2\colon \deln w = 0 \text{ on } \del\Omega\}.
\end{equation*}
We remark that the Neumann-Laplace operator $-\laplace_N\colon H^1\cap L_0^2\to (H^1)_0^\ast$ is positive definite and self-adjoint. In particular, due to the Lax-Milgram theorem and the Poincaré inequality,
%(see (\ref{Poincare}))
its inverse operator $(-\laplace_N)^{-1}\colon (H^1)_0^\ast\to H^1\cap L_0^2$ is well-defined and we write $v\coloneqq (-\laplace_N)^{-1}f$ for ${f\in (H^1)_0^\ast}$ if $v_{\Omega}=0$ and 
\begin{equation*}
-\laplace v = f\text{ in }\Omega,\quad\deln v =0\text{ on }\del\Omega.
\end{equation*}
The embeddings $H_\n^2\hookrightarrow H^1\hookrightarrow L^2\simeq (L^2)^\ast\hookrightarrow (H^1)^\ast\hookrightarrow (H_\n^2)^\ast$ are continuous. Therefore we can identify $\langle u{,}v\rangle_{H^1}=(u{,}v)_{L^2}$, $\langle u{,}w\rangle _{H^2} = (u{,}w)_{L^2}$ for all $u\in L^2,~ v\in H^1$ and $w\in H_\n^2$.\\[1ex]
Eventually, we introduce the function spaces
\begin{align*}
\V_1 &\coloneqq \big(H^1(L^2)\cap L^{\infty}(H^2)\cap L^2(H^4)\big) \times \big(L^{\infty}(L^2)\cap L^2(H^2)\big)\times  L^{\infty}(H^2)\times  L^8(\H^2)\times L^8(H^1),\\
\V_2 &\coloneqq L^8(\L^2)\times L^8(H^1)\times L^2(L^2)\times \big( L^{\infty}(L^2)\cap L^2(H^2)\big)\times L^{\infty}(L^2),\\
V_3&\coloneqq \big(H^1((H^1)^*)\cap L^{\infty}(H^1)\cap L^2(H^3)\big) \times L^2(H^1)\times  L^{2}(H^2)\times  L^2(\H^2)\times L^2(H^1),\\
\V_4&\coloneqq H^1\times L^2(L^{6/5})\times  L^2(H^1)\times L^2(L^2)\times L^{2}(\H^1)\times L^2(\L^2),
\end{align*}
endowed with their standard norms.\\[1ex]

%\pagebreak[4]

\noindent Moreover, we make the following assumptions which we will use in the rest of this paper:\\[1ex]
\noindent\textbf{Assumptions.}
\begin{enumerate}
	\item[(A1)] The domain $\Omega\subset\R^d,~d=2,3,$ is bounded with $C^4-$boundary $\Gamma:=\del\Omega$. Moreover the initial datum $\varphi_0\in H^2_\n$ and $\sigma_B\in C([0,T];L^2)$ are given functions. 
	\item[(A2)] The constants $T$, $\eta$, $\nu$, $m$, $b$ are positive and the constants $P$, $A$, $\lambda$, $\chi$ are nonnegative.
	\item[(A3)] The non-negative function $\h$ belongs to $C^3_b(\R)$, i.e., $\h$ is bounded, three times continuously differentiable and its first, second and third-order derivatives are bounded. Without loss of generality, we assume that $|\h|\le 1$.
	\item[(A4)] $\psi$ is the smooth double-well potential, i.e., $\psi(s):=\frac 1 4 (s^2-1)^2$ for all $s\in\R$.
\end{enumerate}

%\begin{remark}
%	\begin{itemize}
%		\itemi 	Using \textnormal{(A4)}, it is straightforward to check that there exist positive constants $R_i,~i=5,...,9,$ such that 
%		\begin{align}
%		\label{assumption_on_psi_5}\psi(s)&\geq R_5|s|^{\rho} - R_6 &&\quad \forall s\in \R, \\
%		\label{assumption_on_psi_6}|\psi'(s)|&\leq R_7(1+|s|^{\rho-1}) &&\quad \forall s\in \R, \\
%		\label{assumption_on_psi_7}|\psi'(s_1)-\psi'(s_2)|&\leq R_8(1+|s_1|^4 + |s_2|^4)|s_1-s_2|&&\quad \forall s_1,s_2\in\R,\\
%		\label{assumption_on_psi_8}|\psi''(s_1)-\psi''(s_2)|&\leq R_9(1+|s_1|^3 + |s_2|^3)|s_1-s_2|&&\quad \forall s_1,s_2\in\R.
%		\end{align}
%		\itemii  The assumptions \eqref{assumption_on_psi_2}-\eqref{assumption_on_psi_4} (and thus also \eqref{assumption_on_psi_5}-\eqref{assumption_on_psi_8}) are fulfilled by the classical double-well potential $\psi(s)=\frac 1 4 (s^2-1)^2$. For the splitting we can choose $\psi_1(s)=\frac 1 4 s^4$ and $\psi_2(s)=-\frac 1 2 s^2 + \frac 1 4 $.
%	\end{itemize}
%\end{remark}

\pagebreak[2]

\begin{remark}\label{REM:PSI_H} $\;$
\begin{enumerate}
\itema In principal, it would be possible to consider more general potentials $\psi(\cdot)$. However, since the double-well potential is the classical choice for Cahn--Hilliard-type equations (apart from singular potentials like the logarithmic or double-obstacle potential) and to avoid being too technical, we focus on the above choice for $\psi$ in this work. 
\itemb For the function $\h(\cdot)$, there are two choices which are quite popular in the literature. In, e.g., \cite{GarckeLam3, GarckeLamSitkaStyles}, the choice for $\h$ is given by
\begin{equation*}
\h(\varphi) = \max\Big\{0,\min\left\{1,\tfrac{1}{2}(1+\varphi)\right\}\Big\}\quad\forall \varphi\in\R,
\end{equation*}
satisfying $\h(-1)= 0$, $\h(1)=1$. Other authors preferred to assume that $\h$ is only active on the interface, i.e., for values of $\varphi$ between $-1$ and $1$, which motivates functions of the form
\begin{equation*}
\h(\varphi) = \max\Big\{0,\tfrac{1}{2}(1-\varphi^2)\Big\}\quad\text{or}\quad \h(\varphi) = \tfrac{1}{2}\Big(\cos\big(\pi \min \big\{1,\max\{\varphi,-1\} \big\}\big) + 1\Big),
\end{equation*}
see, e.g., \cite{HilhorstKampmannNguyenZee,KahleLam}. Surely, we would have to use regularized versions of these choices to fulfill \textnormal{(A3)}.
\itemc We want to point out that the analysis for singular potentials is quite delicate, even for the uncontrolled version of \eqref{EQ:CHB}, since source terms are present. In contrast to, for example, \cite{Signori2} where the estimates for $\mu$ are obtained from the energy due to an additional regularization term $\alpha\delt\mu$ (with $\alpha>0$) in \eqref{EQ:CHB3}, our analysis is based on an estimate for the mean value of $\mu$. Clearly, if $\psi(\cdot)$ is singular, such an estimate is quite hard to obtain. Following the arguments, for instance, in \cite{BloweyElliot1}, it is crucial to prove that $\frac{1}{|\Omega|}\intO \varphi(t)\dx\in (-1,1)$ for almost every $t\in (0,T)$. \\[1ex]
Since our model does not have the property of mass conservation for the phase field $\varphi$, we would require additional assumptions on the source terms and an argument such as used in \cite{GarckeLamStyles}. However, this argument does not work when using source terms, for example, as in \cite{HilhorstKampmannNguyenZee,KahleLam} (see (b)) in combination with the double obstacle potential, which can be seen as follows: If there was a solution to \eqref{EQ:CHB} that fulfills $(\varphi(t_*))_{\Omega}=1$ for some $t_*\in (0,T)$, then $(\varphi(t))_{\Omega}=1$ for all $t\in [t_*,T]$.

\end{enumerate}
\end{remark}

\section{The control-to-state operator and its properties}

We consider the system \eqref{EQ:CHB} as presented in the introduction. The first step is to define a set of controls that are admissible for our problem. Then we show that each of these admissible controls induces a unique strong solution (the so-called \textbf{state}) of the system \eqref{EQ:CHB}. Thus, we can define a control-to-state-operator which maps any admissible control onto its corresponding state. We show that this operator has several important properties that are essential for calculus of variations: It is Lipschitz-continuous, Fréchet-differentiable and weakly compactness in some suitable sense.

\subsection{The set of admissible controls}
The set of admissible controls is defined as follows:
\begin{definition}
	Let $\bar{a},\bar{b}\in L^2(L^2)$ be arbitrary fixed functions with $\bar{a}\le \bar{b}$ almost everywhere in $\OmegaT$. Then the set
	\begin{align}
		\U:=\big\{ u\in L^2(L^2) \;\big\vert\; \bar{a}(x,t) \le u(x,t) \le \bar{b}(x,t) \;\;\text{for almost every}\; (x,t)\in\OmegaT \big\}
	\end{align}
	is referred to as the \textbf{set of admissible controls}. Its elements are called \textbf{admissible controls}.
\end{definition}
Note that this box-restricted set of admissible controls $\U$ is a non-empty, bounded subset of the Hilbert space $L^2(L^2)$ since for all $u\in\U$,
\begin{align}
	\norm{u}_{L^2(L^2)} < \norm{\bar{a}}_{L^2(L^2)} + \norm{\bar{b}}_{L^2(L^2)} + 1 =: R.
\end{align}
This means that 
\begin{align}
	\U \subsetneq \UR \twith \UR := \big\{ u\in L^2(L^2) \;\big\vert\; \norm{u}_{L^2(L^2)} <R \big\}.
\end{align}
Obviously, the set $\U$ is also convex and closed in $L^2(L^2)$. Therefore, it is weakly sequentially compact (see \cite[Thm.\,2.11]{troeltzsch}). 

\subsection{Strong solutions and uniform bounds}

We can show that the system \eqref{EQ:CHB} has a unique strong solution for every control $u\in\UR$:

\begin{proposition}
	\label{THM:EXS}
	Let $ u\in\UR$ be arbitrary. Then, there exists a strong solution quintuplet $(\varphi_u,\mu_u,\sigma_u,\v_u,p_u)\in \V_1$ of \eqref{EQ:CHB}.
%	 with
%	\begin{gather*}
%	\varphi_u \in H^1(L^2) \cap L^\infty(H^2)\cap L^2(H^4), \quad 	\mu_u \in L^\infty(L^2)\cap L^2(H^2), \\
%	\sigma_u \in  L^{\infty}(H^2),\quad \v_u \in L^8(\H^2)  , \quad p_u \in L^8(H^1). 
%	\end{gather*}
%	This unique solution is called the \textbf{state} of the control system.
	 Moreover, every strong solution $(\varphi_u,\mu_u,\sigma_u,\v_u,p_u)$ satisfies the following bounds that are uniform in $u$: 
	\begin{equation}
		\label{THM:EXS:EST}\norm{(\varphi_u,\mu_u,\sigma_u,\v_u,p_u)}_{\V_1} \le C_1,	
	\end{equation}
	where $C_1>0$ is a constant that depends only on on $R$, $\Omega$ and $T$ and the system parameters.
\end{proposition}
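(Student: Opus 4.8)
The plan is to establish the a priori estimate \eqref{THM:EXS:EST} first, since the existence statement then follows via a Galerkin (or other suitable) approximation scheme combined with compactness: one discretizes the system, derives the same bounds uniformly in the discretization parameter, and passes to the limit using Aubin--Lions-type compactness to handle the nonlinear terms $(P\sigma-A)\h(\varphi)$, $\varphi\v$, $\psi'(\varphi)$, $(\mu+\chi\sigma)\grad\varphi$ and $\h(\varphi)\sigma$. Uniqueness is obtained separately by taking the difference of two solutions, testing with the natural test functions and applying a Gronwall argument; since $\h\in C^3_b$ and $\psi'$ is locally Lipschitz (with the $L^\infty$ bounds on $\varphi$ coming from the energy estimate), the difference estimate closes. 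So the heart of the matter is the chain of uniform bounds.

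I would derive the estimates in the following order. \emph{Step 1 (basic energy estimate).} Test \eqref{EQ:CHB3} with $\mu$, \eqref{EQ:CHB4} with $\delt\varphi$, \eqref{EQ:CHB5} with $\sigma$, and use \eqref{EQ:CHB2} tested with $\v$ together with \eqref{EQ:CHB1}; adding these, the coupling terms involving $\grad\varphi$, $\divergence(\varphi\v)$ and $\chi\sigma$ should telescope (this is the dissipative structure of the model), yielding control of $\tfrac{\d}{\dt}\big(\tfrac12\norml{2}{\grad\varphi}^2 + \epsilon^{-1}\int_\Omega\psi(\varphi)\big)$ plus $m\norml{2}{\grad\mu}^2 + 2\eta\norm{\D\v}_{\L^2}^2 + \nu\norm{\v}_{\L^2}^2 + \norml{2}{\grad\sigma}^2$. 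Using $|\h|\le1$, the boundedness of $\sigma$ (first extracted from \eqref{EQ:CHB5} by a maximum-principle/testing argument showing $0\le\sigma\le\max\{1,\|\sigma_B\|\}$), Korn's inequality for $\v$, and the mass balance obtained by integrating \eqref{EQ:CHB3}, one controls $\varphi$ in $L^\infty(H^1)\cap$ (via $\psi$) $L^\infty(L^4)$, $\mu$ in $L^2(H^1)$ after recovering the mean of $\mu$ from \eqref{EQ:CHB4}, $\v$ in $L^2(\H^1)$, $p$ in $L^2(L^2)$ from the Brinkman equation and an inf-sup/Bogovskii argument, and $\sigma$ in $L^\infty(L^\infty)\cap L^2(H^1)$, hence (by elliptic regularity on \eqref{EQ:CHB5}) in $L^\infty(H^2)$.

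\emph{Step 2 (higher regularity for $\varphi$ and $\mu$).} With $\sigma\in L^\infty(H^2)$ in hand, I would bootstrap: test \eqref{EQ:CHB4} and its gradient/Laplacian against suitable powers of $-\laplace\varphi$, and use \eqref{EQ:CHB3} written as $\delt\varphi = m\laplace\mu - \divergence(\varphi\v) + (P\sigma-A-u)\h(\varphi)$ to gain $\varphi\in H^1(L^2)\cap L^\infty(H^2)\cap L^2(H^4)$ and $\mu\in L^\infty(L^2)\cap L^2(H^2)$, treating $\divergence(\varphi\v)$ with the $L^\infty(H^1)$ bound on $\varphi$ and interpolation for $\v$. \emph{Step 3 (velocity/pressure integrability).} Finally, to reach $\v\in L^8(\H^2)$ and $p\in L^8(H^1)$, I would view \eqref{EQ:CHB2}--\eqref{EQ:CHB7} as a generalized Stokes (Brinkman) system with right-hand side $(\mu+\chi\sigma)\grad\varphi$ and inhomogeneous divergence $(P\sigma-A)\h(\varphi)$, apply $\L^q$ elliptic regularity, and estimate the right-hand side in the appropriate mixed Lebesgue--Bochner norm using the bounds from Steps 1--2 and Gagliardo--Nirenberg interpolation in space (the exponent $8$ reflecting the $d=2,3$ embeddings and the time-integrability of the $H^2$/$H^4$ norms of $\varphi$).

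The main obstacle I anticipate is Step 2/Step 3, specifically controlling the convective and capillary couplings: the term $\divergence(\varphi\v)$ in the Cahn--Hilliard equation and $(\mu+\chi\sigma)\grad\varphi$ in the Brinkman equation create a loop in which regularity of $\varphi$ feeds the velocity and vice versa. Closing this loop requires carefully choosing the order of the estimates and exploiting that the Brinkman term $\nu\v$ (absent in the Darcy or pure-Stokes cases) gives a coercive zeroth-order contribution, together with sharp Gagliardo--Nirenberg interpolation so that the nonlinear terms are absorbed with small constants or handled by Gronwall. All constants produced this way depend only on the system parameters, $\Omega$, $T$, and on $R$ through $\norm{u}_{L^2(L^2)}<R$, which is exactly the claimed dependence.
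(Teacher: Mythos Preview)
Your proposal is correct and follows essentially the same three-step strategy as the paper: basic energy/testing estimates for Step~1, then testing the Cahn--Hilliard equation \eqref{EQ:CHB3} with $\laplace^2\varphi$ (after inserting \eqref{EQ:CHB4}) and a Gronwall argument to reach $\varphi\in L^\infty(H^2)\cap L^2(H^4)$ and $\mu\in L^\infty(L^2)\cap L^2(H^2)$ in Step~2, and finally Brinkman/Stokes regularity plus Gagliardo--Nirenberg interpolation to obtain $\v\in L^8(\H^2)$, $p\in L^8(H^1)$ in Step~3. One small caveat: the paper assumes only $\sigma_B\in C(L^2)$, so your maximum-principle argument for a pointwise bound $0\le\sigma\le\max\{1,\|\sigma_B\|\}$ is not available; the $L^\infty(L^\infty)$ control on $\sigma$ must come, as you also indicate, from the elliptic $H^2$ estimate on \eqref{EQ:CHB5} together with the embedding $H^2\hookrightarrow L^\infty$, which is exactly what the paper does.
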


\pagebreak[3]

\begin{proof}
	The assertion follows with slight modifications in the proof of \cite[Thm.\,4]{EbenbeckKnopf}. We will sketch the main differences in the following:
	\paragraph{Step 1:} Testing \eqref{EQ:CHB5} with $\sigma$, using \eqref{EQ:CHB6}, the non-negativity of $\h(\cdot)$ and Hölder's and Young's inequalities, we obtain
	\begin{equation*}
	\intO |\grad\sigma|^2\dx + b\intO |\sigma|^2\d x \leq \frac{b}{2}\intO |\sigma|^2 + |\sigma_B|^2\dx,
	\end{equation*}
	meaning
	\begin{equation}
	\label{proof_ex_1} \normh{1}{\sigma}\leq C\norml{2}{\sigma_B}.
	\end{equation}
%	  Testing (\ref{EQ:CHB3}) with $\mu+\chi\sigma$, it turns out that we have to estimate a term given by $-\intO u\h(\varphi)(\mu+\chi\sigma)\dx$. Using Hölder's, Young's and Poncaré's inequalities, we obtain
%	\begin{align}
%	\label{proof_ex_2}
%	&\nonumber\left|\intO u\h(\varphi)(\mu+\chi\sigma)\dx\right| = \left|\intO u\h(\varphi)\big((\mu + \chi\sigma-(\mu_{\Omega}-\chi\sigma_{\Omega})\big)\dx + (\mu_{\Omega}+\chi\sigma_{\Omega})\intO u\h(\varphi)\dx\right|\\
%	&\quad \leq \frac{1}{4\delta}\norml{\infty}{\h(\varphi)}^2\norml{2}{u}^2 + \delta C_P^2\normL{2}{\grad(\mu+\chi\sigma)}^2
%		+ |\mu_{\Omega}+\chi\sigma_{\Omega}|\,\norml{\infty}{\h(\varphi)}\norml{2}{1}\norml{2}{u},
%	\end{align}
%	for $\delta>0$ arbitrary, where $C_P$ is the constant arising in Poincaré's inequality. Testing (\ref{EQ:CHB4}) with $1$, using (\ref{EQ:CHB6}) and the assumptions on $\psi(\cdot)$, we obtain
%	\begin{equation}
%	\label{proof_ex_3}|\mu_{\Omega}+\chi\sigma_{\Omega}|\leq C(1+\norml{1}{\psi(\varphi)}).
%	\end{equation}
%	Plugging in (\ref{proof_ex_3}) into (\ref{proof_ex_2}), using the boundedness of $\h(\cdot)$ and Young's inequality, we obtain
%	\begin{equation}
%	\label{proof_ex_4}\left|\intO u\h(\varphi)(\mu+\chi\sigma)\dx\right| \leq C_{\delta}(1+\norml{2}{u}^2)(1+\norml{1}{\psi(\varphi)}) + 2\delta C_P^2\left(\normL{2}{\grad\mu}^2 + \chi^2\normL{2}{\grad\sigma}^2\right).
%	\end{equation}
	Then, with exactly the same arguments as in \cite[Proof of Thm.\,4]{EbenbeckKnopf} it follows that
	\begin{align}
	\nonumber&\norm{\varphi}_{H^1((H^1)^*)\cap L^{\infty}(H^1)\cap L^4(H^2)\cap L^2(H^3) } + \norm{\sigma}_{L^4(H^1)} + \norm{\mu}_{L^2(H^1)\cap L^4(L^2)}\\
	\label{proof_ex_5}&\quad + \norm{\divergence(\varphi\v)}_{L^2(L^2)}  + \norm{\v}_{L^{\frac{8}{3}}(\H^1)}+ \norm{p}_{L^2(L^2)}\leq C,
	\end{align}
	with a constant $C$ depending only on the system parameters and on $\Omega$, $T$ and $R$.
	\paragraph{Step 2:} Now, we want to establish higher order estimates.
	Using elliptic regularity theory, the assumptions on $\h(\cdot)$ and $\sigma_B$, \eqref{EQ:CHB5}-\eqref{EQ:CHB7}, \eqref{proof_ex_1} and \eqref{proof_ex_5}, it is easy to check that
	\begin{equation}
	\label{proof_ex_6} \norm{\sigma}_{L^{\infty}(H^2)}\leq C.
	\end{equation}
	Together with the boundedness of $\h(\cdot)$ and the Sobolev embedding $H^2\subset L^{\infty}$, this implies
	\begin{equation}
	\label{proof_ex_7}\norm{\sigma}_{L^{\infty}(L^{\infty})} + \norm{\divergence(\v)}_{L^{\infty}(L^{\infty})} + \norm{(P\sigma-A)\h(\varphi)}_{L^{\infty}(L^{\infty})}\leq C.
	\end{equation}
	Testing \eqref{EQ:CHB3} with $\laplace^2\varphi$,  \eqref{EQ:CHB3} with $m\laplace^3\varphi$, integrating by parts and summing the resulting identities, we obtain
	\begin{align}
	\nonumber \frac{\d}{\d t}\frac{1}{2}\intO |\laplace\varphi|^2\dx + m\intO |\laplace^2\varphi|^2\dx &= \intO \big( (P\sigma-A-u)\h(\varphi)-\divergence(\varphi\v)\big) \laplace^2\varphi\d x\\
	\label{proof_ex_8}&\quad + \intO m\laplace(\psi'(\varphi) - \chi\sigma)\laplace^2\varphi\dx.
	\end{align}
	Due to Hölder's and Young's inequalities and \eqref{proof_ex_5}-\eqref{proof_ex_7}, the Sobolev embedding $\H^1\subset \L^6$ and elliptic estimates, it follows that
	\begin{align}
	\label{proof_ex_9}
	\left|\intO \big( (P\sigma-A-u)\h(\varphi)-\divergence(\varphi\v)-m\chi \laplace \sigma\big)\laplace^2\varphi\dx \right|&\leq C\norml{2}{u}^2 + C\normH{1}{v}^2\left(1+\norml{2}{\laplace\varphi}^2\right)+ \frac{m}{4}\norml{2}{\laplace^2\varphi}^2.
	\end{align}
	Now, we observe that
	\begin{equation*}
	\laplace(\psi'(\varphi)) = \psi'''(\varphi)|\grad\varphi|^2 + \psi''(\varphi)\laplace\varphi.
	\end{equation*}
	Using Hölder's, Young's and Gagliardo-Nirenberg's inequalities, the assumptions on $\psi(\cdot)$, elliptic regularity theory and  \eqref{proof_ex_5}, this implies
	\begin{align}
	\nonumber \left|\intO m\laplace(\psi'(\varphi))\laplace^2\varphi\d x\right| &\leq C\left(1+\norml{\infty}{\varphi}^4 + \norml{\infty}{\varphi}^2\normL{3}{\grad\varphi}^2\right)(1 + \norml{2}{\laplace\varphi}^2) + \frac{m}{2}\norml{2}{\laplace^2\varphi}^2\\
	\label{proof_ex_10}&\leq C\left(1+\normh{3}{\varphi}^2\right)\left(1 + \norml{2}{\laplace\varphi}^2\right) + \frac{m}{2}\norml{2}{\laplace^2\varphi}^2.
	\end{align}
	Using \eqref{proof_ex_9}-\eqref{proof_ex_10} in \eqref{proof_ex_8}, recalling \eqref{proof_ex_5} and using elliptic regularity theory, a Gronwall argument yields
	\begin{equation}
	\label{proof_ex_11}\norm{\varphi}_{L^{\infty}(H^2)} + \norm{\varphi}_{L^2(H^4)}\leq C. 
	\end{equation}
	Then, a comparison argument in \eqref{EQ:CHB4} yields
	\begin{equation}
	\label{proof_ex_12}\norm{\mu}_{L^{\infty}(L^2)\cap L^2(H^2)}\leq C.
	\end{equation}
	A further comparison argument in \eqref{EQ:CHB3} yields
	\begin{equation}
	\label{proof_ex_13}\norm{\delt\varphi}_{L^2(L^2)}\leq C.
	\end{equation}
	Using \eqref{proof_ex_5}-\eqref{proof_ex_7}, \eqref{proof_ex_11}-\eqref{proof_ex_12}, the assumptions on $\h(\cdot)$ and Gagliardo-Nirenberg's inequality, it is easy to check that
	\begin{equation*}
	\norm{(P\sigma-A)\h(\varphi)}_{L^8(H^1)}\leq C,\qquad \norm{(\mu+\chi\sigma)\grad\varphi}_{L^8(\L^2)}\leq C.
	\end{equation*}
	Due to \cite[Lemma 1.5]{EbenbeckGarcke}, this implies
	\begin{equation}
	\norm{\v}_{L^8(\H^2)} + \norm{p}_{L^8(H^1)}\leq C,
	\end{equation}
	which completes the proof.
\end{proof}	

By means of interpolation theory, we can conclude that the $\varphi$-component of a strong solution quintuplet has a representative that is continuous on $\OmegaT$.

\begin{corollary}
	Let $u\in\UR$ and $\varphi_0\in H^2_\n(\Omega)$ be arbitrary and let $(\varphi_u,\mu_u,\sigma_u,\v_u,p_u)$ denote the strong solution of the system \eqref{EQ:CHB}. Then, $\varphi_u$ has the following additional properties:
	\begin{gather*}
		 \varphi_u\in C([0,T];H^2), \quad \varphi_u\in C(\overline{\OmegaT}) \twith \norm{\varphi_u}_{C([0,T];H^2)\cap C(\overline{\OmegaT})} \le C_2 %, \quad \varphi_u(T) \in H^2_\n(\Omega)
	\end{gather*}
	for some constant $C_2>0$ that depends only on the system parameters and on $R$, $\Omega$, $\Gamma$ and $T$.
\end{corollary}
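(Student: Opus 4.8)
The plan is to read off from the $\V_1$-bound \eqref{THM:EXS:EST} that $\varphi_u \in L^2(0,T;H^4)$ and $\delt\varphi_u \in L^2(0,T;L^2)$ — both norms being controlled by $C_1$, since $\V_1$ contains the factor $H^1(L^2)\cap L^\infty(H^2)\cap L^2(H^4)$ — and then to upgrade the (a priori only weak) continuity into $H^2$ to strong continuity by a classical interpolation theorem, after which the continuity on $\overline{\OmegaT}$ follows from the Sobolev embedding $H^2(\Omega)\hookrightarrow C(\overline\Omega)$, which is available because $d\le 3$.

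First I would invoke the interpolation identity $[H^4(\Omega),L^2(\Omega)]_{1/2}=H^2(\Omega)$ with equivalent norms, valid for the bounded domain $\Omega$ with $C^4$-boundary $\Gamma$ (this is where the regularity of $\Gamma$ enters; see the classical results of Lions and Magenes, or Bergh–Löfström). The Lions–Magenes embedding theorem for vector-valued functions then gives
\[
\big\{\, w \;:\; w\in L^2(0,T;H^4),\ \delt w\in L^2(0,T;L^2) \,\big\}\hookrightarrow C([0,T];H^2),
\]
together with an estimate $\norm{w}_{C([0,T];H^2)}\le C\big(\norm{w}_{L^2(H^4)}+\norm{\delt w}_{L^2(L^2)}\big)$ in which $C$ depends only on $\Omega$, $\Gamma$ and $T$, not on $w$. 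Applying this to $w=\varphi_u$ and using \eqref{THM:EXS:EST}, I obtain $\varphi_u\in C([0,T];H^2)$ with $\norm{\varphi_u}_{C([0,T];H^2)}$ bounded by a constant of the asserted type. Next, since $2>d/2$ for $d\in\{2,3\}$, the embedding $H^2(\Omega)\hookrightarrow C(\overline\Omega)$ is continuous, so composing yields $\varphi_u\in C([0,T];C(\overline\Omega))$ with the same kind of bound. Finally, because $\overline\Omega$ is compact, $C([0,T];C(\overline\Omega))$ is canonically identified with $C(\overline{\OmegaT})$: if $F\in C([0,T];C(\overline\Omega))$, the function $(x,t)\mapsto F(t)(x)$ is jointly continuous on $\overline\Omega\times[0,T]$ by the estimate $|F(t)(x)-F(t')(x')|\le \norm{F(t)-F(t')}_{C(\overline\Omega)}+|F(t')(x)-F(t')(x')|$, and the sup-norms agree. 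Collecting the constants and taking $C_2$ to be their maximum gives $\varphi_u\in C([0,T];H^2)\cap C(\overline{\OmegaT})$ with $\norm{\varphi_u}_{C([0,T];H^2)\cap C(\overline{\OmegaT})}\le C_2$, where $C_2$ depends only on the system parameters and on $R$, $\Omega$, $\Gamma$ and $T$.

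Since every ingredient is a standard embedding or interpolation result, there is no genuine obstacle here; the only points deserving a little care are citing the correct form of the interpolation identity $[H^4,L^2]_{1/2}=H^2$ for the domain at hand (which is why $\Gamma\in C^4$ matters) and checking that the constant in the Lions–Magenes theorem is independent of $u$ — which it is, so that the uniform bound $C_1$ transfers directly to the uniform bound $C_2$.
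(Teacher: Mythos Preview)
Your argument is correct and is precisely the interpolation argument the paper has in mind: the paper's own proof merely says ``the assertion can be established by an interpolation argument'' and refers to \cite[Cor.~5]{EbenbeckKnopf}, so you have supplied exactly the details that are deferred there.
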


\begin{proof}
	The assertion can be established by an interpolation argument. The proof proceeds completely analogously to the proof of \cite[Cor. 5]{EbenbeckKnopf}.
\end{proof}

\bigskip

Furthermore, we can show that any control $u\in\UR$ induces a unique strong solution of the system \eqref{EQ:CHB}:

\begin{theorem}
\label{THM:UNS} Let $u\in\UR$ and $\varphi_0\in H^2_\n(\Omega)$ be arbitrary and let $(\varphi_{u},\mu_{u},\v_{u},\sigma_{u},p_{u})$ denote the corresponding strong solution as given by Proposition \ref{THM:EXS}. Then, this strong solution is unique.
\end{theorem}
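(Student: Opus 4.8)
The plan is to argue by a continuous-dependence estimate at the level of the phase variable, parallel to the corresponding uniqueness argument in \cite{EbenbeckKnopf} but simplified by the new nutrient equation. Suppose $(\varphi_i,\mu_i,\sigma_i,\v_i,p_i)\in\V_1$, $i=1,2$, are two strong solutions of \textnormal{(CHB)} for the same control $u$ and initial datum $\varphi_0$, and set $\varphi:=\varphi_1-\varphi_2$, and analogously $\mu,\sigma,\v,p$. Throughout I would use the uniform bounds of Proposition \ref{THM:EXS} together with the embedding $H^2\subset L^\infty$ (valid for $d\le 3$), which yields $\sigma_i,\varphi_i\in L^\infty(L^\infty)$, $\divergence(\v_i)=(P\sigma_i-A)\h(\varphi_i)\in L^\infty(L^\infty)$ and $\v_i\in L^8(\L^\infty)$, as well as the Lipschitz bounds $|\h(\varphi_1)-\h(\varphi_2)|\le\|\h'\|_\infty|\varphi|$ and $|\psi'(\varphi_1)-\psi'(\varphi_2)|\le C|\varphi|$ (the latter because $\psi'(s)=s^3-s$ and the $\varphi_i$ are bounded). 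The goal is a Gronwall inequality for $\|\varphi(t)\|_{L^2}^2$.

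Step 1 (decoupling the elliptic blocks). Subtracting the two equations \eqref{EQ:CHB5}, testing the difference with $\sigma$, and using $\h\ge 0$, $\sigma_2\in L^\infty(L^\infty)$ and Young's inequality gives $\|\sigma(t)\|_{H^1}\le C\|\varphi(t)\|_{L^2}$ for a.e.\ $t$; elliptic regularity upgrades this to $\|\sigma(t)\|_{H^2}\le C\|\varphi(t)\|_{L^2}$. Testing the difference of \eqref{EQ:CHB4} with $\varphi$ and using the monotonicity of $s\mapsto\psi'(s)+s$ yields $\|\grad\varphi(t)\|_{L^2}^2\le C\big(\|\mu(t)\|_{L^2}^2+\|\varphi(t)\|_{L^2}^2\big)$. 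Finally, the difference $(\v,p)$ solves the Brinkman system \eqref{EQ:CHB1}, \eqref{EQ:CHB2}, \eqref{EQ:CHB7} with data $f:=P\sigma\,\h(\varphi_1)+(P\sigma_2-A)\big(\h(\varphi_1)-\h(\varphi_2)\big)$ and $\G:=(\mu+\chi\sigma)\grad\varphi_1+(\mu_2+\chi\sigma_2)\grad\varphi$; the energy-level elliptic estimate for the Brinkman operator (as in \cite[Lemma 1.5]{EbenbeckGarcke}) together with $\grad\varphi_1\in L^\infty(\L^3)$, $\mu_2\in L^4(H^1)$ (by interpolation of $L^\infty(L^2)\cap L^2(H^2)$) and the nutrient estimate then gives, for a.e.\ $t$,
\[
\|\v(t)\|_{\H^1}+\|p(t)\|_{L^2}\le C\big(\|\varphi(t)\|_{L^2}+\|\mu(t)\|_{L^2}\big)+C\big(1+\|\mu_2(t)\|_{H^1}\big)\|\grad\varphi(t)\|_{L^2}.
\]

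Step 2 (Gronwall for the phase variable). Test the difference of \eqref{EQ:CHB3} with $\varphi$. Rewriting $\laplace\mu$ via \eqref{EQ:CHB4} produces the good term $-m\|\mu\|_{L^2}^2$, while the $\psi'$- and $\sigma$-contributions are absorbed into it by Young's inequality and the bounds above. For the convective term I would \emph{not} integrate by parts but estimate directly, using $\varphi_1\v_1-\varphi_2\v_2=\varphi\v_1+\varphi_2\v$ and $\divergence\v_i=(P\sigma_i-A)\h(\varphi_i)$,
\[
\|\divergence(\varphi_1\v_1-\varphi_2\v_2)\|_{L^2}\le C\|\varphi\|_{L^2}+C\|\v_1(t)\|_{\H^2}\,\|\grad\varphi\|_{L^2}+C\|\v(t)\|_{\H^1};
\]
the reaction term is controlled with the Lipschitz bound on $\h$, the embedding $H^1\subset L^6$ and $\|u(t)\|_{L^2}\in L^2(0,T)$. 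Inserting the Step 1 bound for $\|\v(t)\|_{\H^1}$ and the inequality $\|\grad\varphi(t)\|_{L^2}^2\le C(\|\mu(t)\|_{L^2}^2+\|\varphi(t)\|_{L^2}^2)$, every factor $\|\grad\varphi\|_{L^2}$ and $\|\mu\|_{L^2}$ is, by Young's inequality, split into a small multiple of $\|\mu\|_{L^2}^2$ — absorbed by $-m\|\mu\|_{L^2}^2$ — plus $\gamma(t)\|\varphi(t)\|_{L^2}^2$ with $\gamma(t):=C\big(1+\|\v_1(t)\|_{\H^2}^2+\|\mu_2(t)\|_{H^1}^2+\|u(t)\|_{L^2}^2\big)\in L^1(0,T)$. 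Hence $\tfrac{\d}{\d t}\|\varphi(t)\|_{L^2}^2\le\gamma(t)\|\varphi(t)\|_{L^2}^2$ with $\varphi(0)=0$, so Gronwall's lemma forces $\varphi\equiv 0$; then $\sigma\equiv 0$ and $\grad\varphi\equiv 0$ by Step 1, $\mu\equiv 0$ by \eqref{EQ:CHB4}, and $\v\equiv 0$, $p\equiv 0$ by the Brinkman estimate, which is the claimed uniqueness.

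The main obstacle — and the reason the argument needs the strong-solution class $\V_1$ of Proposition \ref{THM:EXS} rather than a weak formulation — is the two-way coupling between the Cahn–Hilliard block and the Brinkman block: the convective term $\divergence(\varphi_i\v_i)$ in \eqref{EQ:CHB3} and the capillary forcing $(\mu_i+\chi\sigma_i)\grad\varphi_i$ in \eqref{EQ:CHB2}. Since the difference $(\v,p)$ has only $\H^1\times L^2$ regularity and the quintuples $(\varphi_i,\mu_i,\v_i)$ are \emph{not} bounded in time in their strongest norms, one must route the velocity estimate through Brinkman elliptic regularity and then verify that all the time-dependent quantities occurring as Gronwall weights — essentially $\|\v_i\|_{\H^2}^2$, $\|\mu_i\|_{H^1}^2$ and $\|u\|_{L^2}^2$ — are integrable on $(0,T)$, which is precisely what the exponents built into $\V_1$ (in particular $\v_i\in L^8(\H^2)$) guarantee. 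This bookkeeping, rather than any single inequality, is the delicate point.
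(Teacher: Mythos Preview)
Your argument is correct and gives a clean uniqueness proof at the $L^2$ level: testing the phase-field difference with $\varphi$, extracting the coercive term $-m\|\mu\|_{L^2}^2$ via the relation $\Delta\varphi=-\mu+(\psi'(\varphi_1)-\psi'(\varphi_2))-\chi\sigma$, and closing by Gronwall with weight $\gamma(t)=C(1+\|\v_1(t)\|_{\H^2}^2+\|\mu_2(t)\|_{H^1}^2+\|u(t)\|_{L^2}^2)\in L^1(0,T)$ all work as you describe.

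The paper, however, takes a genuinely different route. Rather than proving uniqueness alone, it establishes a full Lipschitz-continuity estimate $\|\S(u)-\S(\tu)\|_{\V_1}\le C\|u-\tu\|_{L^2(L^2)}$ and obtains uniqueness as the special case $u=\tu$. In Step~1 it imports the $L^\infty(H^1)\cap L^2(H^3)$-level estimate from \cite[Sec.~5]{EbenbeckGarcke2} (roughly your argument plus a test with $-\Delta\varphi$), and in Step~2 it tests \eqref{EQ:DIFF3} with $\Delta^2\varphi$ to push the estimate up to $L^\infty(H^2)\cap L^2(H^4)$ and recover the whole $\V_1$ norm. So your proof is more economical for the theorem as stated, but the paper's approach is not gratuitous: the $\V_1$-Lipschitz estimate is exactly what is recorded as Corollary~\ref{COR:LIP} and drives the Fr\'echet-differentiability and remainder estimates in Propositions~\ref{LEM:FRE} and~\ref{LEM:FREADJ}. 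If you adopt your argument, you would still need a separate proof of the $\V_1$-Lipschitz bound for the rest of the paper to go through.
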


\begin{proof}
	Let $u,\tu\in\UR$ be arbitrary and let $C$ denote a generic nonnegative constant that depends only on $R$, $\Omega$, $\Gamma$ and $T$ and may change its value from line to line. For brevity, we set
	\begin{align*}
%	\u := u-\tu
%	\tand
	(\varphi,\mu,\v,\sigma, p):=(\varphi_{u},\mu_{u},\v_{u},\sigma_{u},p_{u})-(\varphi_\tu,\mu_\tu,\v_\tu,\sigma_\tu,p_\tu).
	\end{align*}
	where $(\varphi_{u},\mu_{u},\v_{u},\sigma_{u},p_{u})$ and $(\varphi_\tu,\mu_\tu,\v_\tu,\sigma_\tu,p_\tu)$ are strong solutions of \eqref{EQ:CHB} to the controls $u$ and $\tu$. In particular, this means that both strong solutions satisfy the initial condition \eqref{EQ:CHB8}, i.e., $\varphi_u(\cdot,0)=\varphi_\tu(\cdot,0)=\varphi_0$ holds almost everywhere in $\Omega$.\\[1ex]
	
	\pagebreak[2]
	\noindent Then, the following equations are satisfied:
	\begin{subequations}
		\label{EQ:DIFF}
		\begin{align}
		\label{EQ:DIFF1}
		\divergence(\v) &= P\sigma\h(\varphi_u) + (P\sigma_\tu-A)(\h(\varphi_u)-\h(\varphi_\tu))&&\text{in}\; \OmegaT,\\
		\label{EQ:DIFF2}
		-\divergence(\T(\v,p)) + \nu\v &= (\mu+\chi\sigma)\grad\varphi_u + (\mu_\tu+\chi\sigma_\tu)\grad\varphi &&\text{in}\; \OmegaT,\\
		\nonumber
		\delt\varphi + \divergence(\varphi_u\v) + \divergence(\varphi\v_\tu) &= m\laplace\mu + P\sigma\h(\varphi_u) + (P\sigma_\tu-A)(\h(\varphi_u)-\h(\varphi_\tu))\\
		\label{EQ:DIFF3}&\quad -(u\h(\varphi_u)-\tu\h(\varphi_\tu))&&\text{in}\; \OmegaT,\\
		\label{EQ:DIFF4}
		\mu &= -\laplace\varphi + (\psi'(\varphi_u)-\psi'(\varphi_\tu)) -\chi\sigma &&\text{in} \; \OmegaT,\\
		\label{EQ:DIFF5}
		-\laplace\sigma + b\sigma + \h(\varphi_u)\sigma &= -\sigma_\tu(\h(\varphi_u)-\h(\varphi_\tu)) &&\text{in}\; \OmegaT,\\[2mm]
		\label{EQ:DIFF6}
		\deln\mu=\deln\varphi=\deln\sigma &= 0 &&\text{in}\; \GammaT,\\
		\label{EQ:DIFF7}
		\T(\v,p)\n &= 0 &&\text{in}\; \GammaT,\\[2mm]
		\label{EQ:DIFF8}
		\varphi(0) &= 0	&&\text{in}\; \Omega.
		\end{align}
	\end{subequations}
	Now, we will show that $\norm{(\varphi,\mu,\v,\sigma, p)}_{\V_1} = 0$ if $u=\tu$. The argumentation is split into two steps:
	\paragraph{Step 1:}
	In \cite{EbenbeckKnopf}, it has been shown that the following inequalities hold: For any $\delta>0$ and all $u,\tu\in\U$,
	\begin{align}
		\label{EQ:LIP1}
		\big|\big( u\h(\varphi_u) - \tu\h(\varphi_\tu){,}\varphi \big)_{L^2}\big| 
			&\le C\,\norml{2}{u-\tu}^2 + C\, \norml{2}{ \varphi}^2 + C\,\norml{2}{\tu}\normh{1}{\varphi}^2,\\
		\label{EQ:LIP2}
		\big|\big( u\h(\varphi_u) - \tu\h(\varphi_\tu){,}\laplace\varphi \big)_{L^2}\big| 
			&\le C\delta^{-1}\, \norml{2}{u-\tu}^2 + C\delta^{-1}\, \norml{2}{\tu}^2 \normh{1}{\varphi}^2
				+ 2\delta \norml{2}{\laplace\varphi}^2 + \delta \normL{2}{\grad\laplace\varphi}^2.
	\end{align}
	Now, multiplying \eqref{EQ:DIFF5} with $\sigma$, integrating by parts and using \eqref{EQ:DIFF6}, it follows that
	\begin{equation*}
	\intO |\grad\sigma|^2 + b|\sigma|^2 + \h(\varphi_u)|\sigma|^2\dx = -\intO \sigma_\tu(h(\varphi_u) - \h(\varphi_\tu))\sigma\dx.
	\end{equation*}
	Using the assumptions on $\h(\cdot)$, Proposition \ref{THM:EXS} and Hölder's and Young's inequalities, it is therefore easy to check that
	\begin{equation}
	\label{EQ:LIP3}\normh{1}{\sigma}\leq C\norml{2}{\varphi}.
	\end{equation}
	Then, we can follow the arguments in \cite[Sec. 5]{EbenbeckGarcke2} to deduce that
	\begin{equation}
	\label{EQ:LIP4}\norm{\varphi}_{H^1((H^1)^*)\cap L^{\infty}(H^1)\cap L^2(H^3)} + \norm{\mu}_{L^2(H^1)} + \norm{\sigma}_{L^2(H^1)} + \norm{\v}_{L^2(\H^1)} + \norm{p}_{L^2(L^2)}\leq C\norm{u-\tu}_{L^2(L^2)}. 
	\end{equation}

	\paragraph{Step 2:} We now prove higher order estimates.
	Using elliptic regularity theory, Proposition \ref{THM:EXS}, \eqref{EQ:LIP3}-\eqref{EQ:LIP4} and the assumptions on $\h(\cdot)$, it is easy to check that
	\begin{equation}
	\label{EQ:LIP5}\norm{\sigma}_{L^{\infty}(H^2)}\leq C\norm{u-\tu}_{L^2(L^2)}.
	\end{equation} 
	Multiplying \eqref{EQ:DIFF3} with $\laplace^2\varphi$ and inserting the expression for $\mu$ given by \eqref{EQ:DIFF4}, we obtain
	\begin{align}
	\nonumber \frac{\d}{\dt}\frac{1}{2}\intO |\laplace\varphi|^2\dx + m\intO |\laplace^2\varphi|^2\dx &= \intO \big( P\sigma\h(\varphi_u) + (P\sigma_\tu-A)(\h(\varphi_u)-\h(\varphi_\tu))\big)\laplace^2\varphi\dx\\
	\nonumber &\quad -\intO \big(\divergence(\varphi_u\v) + \divergence(\varphi\v_\tu)\big)\laplace^2\varphi\dx \\
	\nonumber &\quad + m\intO \laplace\big((\psi'(\varphi_u)-\psi'(\varphi_\tu)) -\chi\sigma\big)\laplace^2\varphi\dx \\
	\label{EQ:LIP6}&\quad -\intO (u\h(\varphi_u)-\tu\h(\varphi_\tu))\laplace^2\varphi\dx,
	\end{align}
	where we used that
	\begin{equation*}
	\intO \delt\varphi\,\laplace^2\varphi\dx = \frac{\d}{\dt}\frac{1}{2}\intO |\laplace\varphi|^2\dx\quad \forall \varphi\in H^2(L^2)\cap L^2(H^4),\quad\deln\varphi = \deln\laplace\varphi = 0.
	\end{equation*}
	
	\pagebreak[2]
	
	\noindent Using Proposition \ref{THM:EXS}, \eqref{EQ:LIP4}-\eqref{EQ:LIP5} together with Hölder's and Young's inequalities, it follows that
	\begin{align}
	\nonumber &\left|\intO \big( P\sigma\h(\varphi_u) + (P\sigma_\tu-A)(\h(\varphi_u)-\h(\varphi_\tu))- \divergence(\varphi_u\v) - \divergence(\varphi\v_\tu)-m\chi\laplace\sigma\big)\laplace^2\varphi\dx \right|\\
	\label{EQ:LIP7}&\quad \leq C\left(\normH{1}{\v}^2 + \normh{1}{\varphi}^2\normH{2}{\v_\tu}^2+ \normh{2}{\sigma}^2 + \norml{2}{\varphi}^2\right) + \frac{m}{8}\norml{2}{\laplace^2\varphi}^2.
	\end{align}
	Using the continuous embedding $H^2\subset L^{\infty}$, Proposition \ref{THM:EXS}, \eqref{EQ:LIP4}, the assumptions on $\h(\cdot)$ and the elliptic estimate
	\begin{equation*}
	\normh{2}{\varphi}\leq C\left(\norml{2}{\varphi} + \norml{2}{\laplace\varphi}\right)\quad\forall \varphi\in H_{\n}^2,
	\end{equation*}
	we obtain 
	\begin{align}
	\nonumber\left|\intO (u\h(\varphi_u)-\tu\h(\varphi_\tu))\laplace^2\varphi\dx\right| & = \left|\intO \big((u-\tu)\h(\varphi_u) +\tu(\h(\varphi_u)-h(\varphi_\tu))\big)\laplace^2\varphi\dx\right|\\
	\nonumber &\leq C\left(\norml{2}{u-\tu} + \norml{2}{\tu}(\norml{2}{\varphi} + \norml{2}{\laplace\varphi})\right)\norml{2}{\laplace^2\varphi}\\
	\label{EQ:LIP8}&\leq C\left(\norml{2}{u-\tu}^2 + \norml{2}{\tu}^2(\norml{2}{\varphi}^2 + \norml{2}{\laplace\varphi}^2)\right) + \frac{m}{8}\norml{2}{\laplace^2\varphi}^2.
	\end{align}
	Now, we observe that
	\begin{align*}
	\laplace(\psi'(\varphi_u)-\psi'(\varphi_\tu)) &= \psi''(\varphi_u)\laplace\varphi + \laplace\varphi_\tu (\psi''(\varphi_u)-\psi''(\varphi_\tu))\\
	&\quad + \psi'''(\varphi_u)\left(\grad\varphi_u + \grad\varphi_\tu\right)\left(\grad\varphi_u-\grad \varphi_\tu \right) + \left(\psi'''(\varphi_u)-\psi'''(\varphi_\tu)\right)|\grad\varphi_\tu|^2.
	\end{align*}
	Due to the assumptions on $\psi(\cdot)$ and because of Proposition \ref{THM:EXS}, it is straightforward to check that
	\begin{equation*}
	\intO |\psi''(\varphi_u)\laplace\varphi|^2 + |\laplace\varphi_\tu (\psi''(\varphi_u)-\psi''(\varphi_\tu))|^2\dx \leq C\left(\norml{2}{\varphi}^2 + \norml{2}{\laplace\varphi}^2\right),
	\end{equation*}
	where we used the continuous embedding $H^2\subset L^{\infty}$ and elliptic regularity theory.
	With similar argument, using the Sobolev embedding $\H^1\subset\L^6$ and the assumptions on $\psi(\cdot)$, we obtain
	\begin{equation*}
	\intO |\psi'''(\varphi_u)\left(\grad\varphi_u + \grad\varphi_\tu\right)\left(\grad\varphi_u-\grad \varphi_\tu \right)|^2 + |\left(\psi'''(\varphi_u)-\psi'''(\varphi_\tu)\right)|^2|\grad\varphi_\tu|^4\dx\leq C\left(\norml{2}{\varphi}^2 + \norml{2}{\laplace\varphi}^2\right).
	\end{equation*}
	From the last two inequalities, we obtain
	\begin{equation}
		\label{EQ:LIP9}\norml{2}{\laplace(\psi'(\varphi_u)-\psi'(\varphi_\tu))}^2\leq C\left(\norml{2}{\varphi}^2 + \norml{2}{\laplace\varphi}^2\right).
	\end{equation}
	Therefore, we have
	\begin{equation}
	\label{EQ:LIP10}\left| m\intO \laplace\big((\psi'(\varphi_u)-\psi'(\varphi_\tu))\big)\laplace^2\varphi\dx\right| \leq C\left(\norml{2}{\varphi}^2 + \norml{2}{\laplace\varphi}^2\right)  + \frac{m}{8}\norml{2}{\laplace^2\varphi}^2.
	\end{equation}
	Plugging in \eqref{EQ:LIP7}-\eqref{EQ:LIP10} into \eqref{EQ:LIP6}, we obtain
	\begin{align}
	\nonumber
	\label{EQ:LIP10a}\frac{\d}{\dt}\intO |\laplace\varphi|^2\dx + m\intO |\laplace^2\varphi|^2\dx &\leq \left(\norml{2}{\varphi}^2 + \norml{2}{\tu}^2\norml{2}{\varphi}^2+\normH{2}{\v_\tu}^2\normh{1}{\varphi}^2+ \normh{2}{\sigma}^2 + \normH{1}{\v}^2  + \norml{2}{u-\tu}^2\right) \\
	&\qquad +C\left(1 + \norml{2}{\tu}^2\right) \norml{2}{\laplace\varphi}^2.
	\end{align}
	Invoking Proposition \ref{THM:EXS} and \eqref{EQ:LIP4} and using elliptic regularity theory, a Gronwall argument yields
	\begin{align}
	&\label{EQ:LIP11}\norm{\varphi}_{H^1((H^1)^*)\cap L^{\infty}(H^2)\cap L^2(H^3)} +  \norm{\laplace\varphi}_{L^2(H^2)} + \norm{\mu}_{L^2(H^1)} + \norm{\sigma}_{L^{\infty}(H^2)} + \norm{\v}_{L^2(\H^1)} + \norm{p}_{L^2(L^2)} \notag\\
	&\quad\leq C\norm{u-\tu}_{L^2(L^2)}. 
	\end{align}
	Using \eqref{EQ:LIP9} and 	\eqref{EQ:LIP11}, a comparison argument in \eqref{EQ:DIFF4} implies
	\begin{equation}
	\label{EQ:LIP12}\norm{\mu}_{L^{\infty}(L^2)\cap L^2(H^2)}\leq C\norm{u-\tu}_{L^2(L^2)}.
	\end{equation}
	Again using elliptic theory, from \eqref{EQ:DIFF4}, \eqref{EQ:DIFF6} and \eqref{EQ:LIP12} we obtain
	\begin{equation}
	\label{EQ:LIP13}\norm{\varphi}_{ L^2(H^4)}\leq C\norm{u-\tu}_{L^2(L^2)}.
	\end{equation}
	A further comparison argument in \eqref{EQ:DIFF3} together with \eqref{EQ:LIP12} yields
	\begin{equation}
	\label{EQ:LIP14}\norm{\delt \varphi}_{ L^2(L^2)}\leq C\norm{u-\tu}_{L^2(L^2)}.
	\end{equation}
	Summarising \eqref{EQ:LIP11}-\eqref{EQ:LIP14}, we obtain
	\begin{equation}
	\label{EQ:LIP15}\norm{\varphi}_{H^1(L^2)\cap L^{\infty}(H^2)\cap L^2(H^4)}   + \norm{\mu}_{L^{\infty}(L^2)\cap L^2(H^2)} + \norm{\sigma}_{L^{\infty}(H^2)} + \norm{\v}_{L^2(\H^1)} + \norm{p}_{L^2(L^2)}\leq C\norm{u-\tu}_{L^2(L^2)}. 
	\end{equation}
	Together with the assumptions on $\h(\cdot)$ and Gagliardo-Nirenberg's inequality, it follows that
	\begin{align}
	\label{EQ:LIP16}\norm{\divergence(\v)}_{L^8(H^1)}& \leq C\norm{u-\tu}_{L^2(L^2)},\\
	\label{EQ:LIP17} \norm{(\mu+\chi\sigma)\grad\varphi_u + (\mu_\tu+\chi\sigma_\tu)\grad\varphi}_{L^8(\L^2)}&\leq 	C\norm{u-\tu}_{L^2(L^2)}.
	\end{align}
	Then, an application of \cite[Lemma 1.5]{EbenbeckGarcke} yields
	\begin{equation}
	\norm{\v}_{L^8(\H^2)} + \norm{p}_{L^8(H^1)}\leq C\norm{u-\tu}_{L^2(L^2)}.
	\end{equation}
	Together with \eqref{EQ:LIP15}, this implies that
	\begin{equation}
	\label{EQ:LIP18}
	%\norm{\varphi}_{H^1(L^2)\cap L^{\infty}(H^2)\cap L^2(H^4)}   + \norm{\mu}_{L^{\infty}(L^2)\cap L^2(H^2)} + \norm{\sigma}_{L^{\infty}(H^2)} + \norm{\v}_{L^8(\H^2)} + \norm{p}_{L^8(H^1)}
	\norm{(\varphi,\mu,\sigma,\v, p)}_{\V_1}
	\leq C\norm{u-\tu}_{L^2(L^2)}. 
	\end{equation}
	Hence, setting $u=\tu$ completes the proof. 
\end{proof}	

Due to Proposition \ref{THM:EXS} and Theorem \ref{THM:UNS}, we can define an operator that maps any control $u\in\UR$ onto its corresponding state:
\begin{definition}
	\label{DEF:CSO}
	For any $u\in\UR$ we write $(\varphi_u,\mu_u,\sigma_u,\v_u,p_u)$ to denote the corresponding unique strong solution of \eqref{EQ:CHB} given by Proposition \ref{THM:EXS}.
	Then the operator
	\begin{align*}
	\S:\;\UR \to \V_1,\quad u\mapsto \S(u):=(\varphi_u,\mu_u,\sigma_u,\v_u,p_u)
	\end{align*}
	is called the \textbf{control-to-state} operator. 
	%	To be precise, it holds that $\S(\UR)\subset \V_1\subset \V_2$.
\end{definition}

\begin{remark}
	The control-to-state operator is defined not only for admissible controls but for all controls in $\UR$. This will be especially important in subsection 3.4 because Fréchet differentiability is merely defined for open subsets of $L^2(L^2)$. Unlike the open ball $\UR$, the set $\U$ is closed and its interior is empty. Therefore it makes sense to investigate the control-to-state operator on the open superset $\UR$ instead. 
\end{remark}

In the following subsections, some properties of the control-to-state operator will be established that are essential for the treatment of optimal control problems.

\subsection{Lipschitz continuity}

The proof of Theorem \ref{THM:UNS} does actually provide more than uniqueness of strong solutions of \eqref{EQ:CHB}. In fact, we have showed that the strong solution depends Lipschitz-continuously on the control.

\begin{corollary}
	\label{COR:LIP}
	The \textbf{control-to-state} operator $S\colon \UR\to \V_1$ is Lipschitz continuous, i.e., there exists a constant $L_1>0$ depending only on the system parameters and on $R$, $\Omega$, $\Gamma$ and $T$ such that for all $u,\tu\in\UR$:
			\begin{align}
			\label{LEM:LIP:EST}\norm{\S(u)-\S(\tu)}_{\V_1}\leq L_1\norm{u-\tu}_{L^2(L^2)}. 
			\end{align}
\end{corollary}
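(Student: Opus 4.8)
The plan is essentially to observe that the corollary is a near-immediate consequence of the estimate already obtained in the proof of Theorem~\ref{THM:UNS}. Indeed, let $u,\tu\in\UR$ be arbitrary and let $(\varphi_u,\mu_u,\sigma_u,\v_u,p_u)=\S(u)$ and $(\varphi_\tu,\mu_\tu,\sigma_\tu,\v_\tu,p_\tu)=\S(\tu)$ be the corresponding unique strong solutions of (CHB), which exist and lie in $\V_1$ by Proposition~\ref{THM:EXS} and Theorem~\ref{THM:UNS}. Setting, exactly as before, $(\varphi,\mu,\v,\sigma,p):=(\varphi_u,\mu_u,\v_u,\sigma_u,p_u)-(\varphi_\tu,\mu_\tu,\v_\tu,\sigma_\tu,p_\tu)$, the difference quintuple solves the system \eqref{EQ:DIFF}. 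Then the entire chain of estimates carried out in Step~1 and Step~2 of the proof of Theorem~\ref{THM:UNS} applies verbatim, since at no point did that argument use $u=\tu$; it was only in the very last line that we specialised to $u=\tu$ to conclude uniqueness. In particular, inequality \eqref{EQ:LIP18} already reads
\begin{equation*}
\norm{\S(u)-\S(\tu)}_{\V_1}=\norm{(\varphi,\mu,\sigma,\v,p)}_{\V_1}\le C\,\norm{u-\tu}_{L^2(L^2)},
\end{equation*}
where $C>0$ depends only on the system parameters and on $R$, $\Omega$, $\Gamma$ and $T$ (through the uniform bounds of Proposition~\ref{THM:EXS} and through the constants appearing in the Gronwall and elliptic-regularity steps). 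Hence the assertion holds with $L_1:=C$.

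The only point that warrants a sentence of care is why the constant $C$ is genuinely independent of the particular pair $(u,\tu)$: this is because every appearance of the states $\varphi_u,\varphi_\tu,\v_\tu,\sigma_\tu,\dots$ on the right-hand sides of \eqref{EQ:LIP4}--\eqref{EQ:LIP18} was controlled using only the uniform-in-$u$ bound \eqref{THM:EXS:EST} from Proposition~\ref{THM:EXS} together with $\norm{u}_{L^2(L^2)},\norm{\tu}_{L^2(L^2)}<R$, and never using any finer information about the specific controls. Thus the Gronwall constant, the elliptic-regularity constants, and the constant from \cite[Lemma 1.5]{EbenbeckGarcke} are all uniform, and so is $L_1$.

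Accordingly I would simply write: the estimate \eqref{EQ:LIP18}, established in the proof of Theorem~\ref{THM:UNS} without ever using $u=\tu$, is precisely \eqref{LEM:LIP:EST} with $L_1=C$, which completes the proof. There is no real obstacle here — the work was already done in proving Theorem~\ref{THM:UNS}, and the corollary merely records the Lipschitz estimate that was a byproduct of that argument. If anything, the ``main point'' is the bookkeeping remark that the dependence of $C$ is on $R,\Omega,\Gamma,T$ and the system parameters only, which is what makes $L_1$ a legitimate Lipschitz constant valid on all of $\UR\times\UR$.
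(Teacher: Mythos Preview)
Your proposal is correct and takes exactly the same approach as the paper: the paper's proof consists of the single sentence ``The assertion follows directly from \eqref{EQ:LIP18},'' and your argument is precisely an elaboration of why that sentence suffices.
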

\begin{proof}
The assertion follows directly from \eqref{EQ:LIP18}.
\end{proof}
\subsection{A weak compactness property}

As the control-to-state operator is nonlinear, the following result will be essential to prove existence of an optimal control (see Section 5.1):
\begin{lemma}
	\label{LEM:COMP}
	Suppose that $(u_k)_{k\in\N}\subset\U$ is converging weakly in $L^2(L^2)$ to some limit $\u\in\U$. Then
		\begin{alignat*}{6}
		&\varphi_{u_k} &&\wto \varphi_\u\quad &&\text{in}\; H^1(L^2)\cap L^2(H^4),\quad
		&&\varphi_{u_k} &&\to \varphi_\u &&\text{in}\; C\big([0,T];W^{1,r}\big)\cap C\big(\overline{\OmegaT}\big),~r\in [1,6),\\
		&\mu_{u_k}&&\wto\mu_\u\quad &&\text{in}\; L^2(H^2),
		&&\v_{u_k}&&\wto\v_\u\quad &&\text{in}\; L^2(\H^2),\\
		&\sigma_{u_k}&&\wto\sigma_\u\quad &&\text{in}\; L^2(H^2), \quad  
		&&p_{u_k}&&\wto p_\u\quad &&\text{in}\; L^2(H^1),
		\end{alignat*}
after extraction of a subsequence, where the limit $(\varphi_\u,\mu_\u,\sigma_\u,\v_\u,p_\u)$ is the strong solution of \eqref{EQ:CHB} to the control $\u \in\U$.
\end{lemma}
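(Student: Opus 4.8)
The plan is to combine the uniform a priori bounds of Proposition~\ref{THM:EXS} with compactness arguments and the uniqueness statement of Theorem~\ref{THM:UNS}. Since $\U\subset\UR$, each control $u_k$ induces the unique strong solution $\S(u_k)=(\varphi_{u_k},\mu_{u_k},\sigma_{u_k},\v_{u_k},p_{u_k})\in\V_1$, and by the bound \eqref{THM:EXS:EST} this family is bounded in $\V_1$ uniformly in $k$. Since every space constituting $\V_1$ is either reflexive or the dual of a separable Banach space, after extracting a (not relabelled) subsequence there is a limit quintuple $(\varphi,\mu,\sigma,\v,p)$ with $\varphi_{u_k}\wto\varphi$ in $H^1(L^2)\cap L^2(H^4)$ and $\wsto$ in $L^\infty(H^2)$, $\mu_{u_k}\wto\mu$ in $L^2(H^2)$, $\sigma_{u_k}\wsto\sigma$ in $L^\infty(H^2)$, $\v_{u_k}\wto\v$ in $L^8(\H^2)\hookrightarrow L^2(\H^2)$, and $p_{u_k}\wto p$ in $L^8(H^1)\hookrightarrow L^2(H^1)$.

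Next I would upgrade the convergence of $\varphi_{u_k}$ to a strong one. Using the bound on $\delt\varphi_{u_k}$ in $L^2(L^2)$, the bound on $\varphi_{u_k}$ in $L^\infty(H^2)$, and the compact embeddings $H^2\hookrightarrow\hookrightarrow W^{1,r}$ for $r\in[1,6)$ and $H^2\hookrightarrow\hookrightarrow C(\overline{\Omega})$ (valid since $d\le3$), the Aubin--Lions--Simon lemma yields, along a further subsequence, $\varphi_{u_k}\to\varphi$ in $C([0,T];W^{1,r})\cap C(\overline{\OmegaT})$ for every $r\in[1,6)$. In particular $\varphi_{u_k}(\cdot,0)\to\varphi(\cdot,0)$ in $W^{1,r}$, so $\varphi(\cdot,0)=\varphi_0$; moreover, by the assumptions (A3)--(A4) and the continuity of $\h,\h',\psi',\psi''$, we obtain $\h(\varphi_{u_k})\to\h(\varphi)$, $\psi'(\varphi_{u_k})\to\psi'(\varphi)$, and so on, uniformly on $\overline{\OmegaT}$. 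I would also derive strong convergence of $\sigma_{u_k}$: subtracting equation \eqref{EQ:CHB5} for $\sigma_{u_k}$ and for $\sigma$, testing the difference with $\sigma_{u_k}-\sigma$, and using $\norm{\h(\varphi_{u_k})-\h(\varphi)}_{L^\infty(L^\infty)}\to0$ together with elliptic regularity shows $\sigma_{u_k}\to\sigma$ in $L^2(H^2)$.

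Finally I would pass to the limit in the weak formulation of (CHB). The linear terms converge by the weak convergences above, while the nonlinear terms are handled by pairing a strongly convergent factor with a weakly convergent one: $\varphi_{u_k}\v_{u_k}\wto\varphi\v$ and $(\mu_{u_k}+\chi\sigma_{u_k})\grad\varphi_{u_k}\wto(\mu+\chi\sigma)\grad\varphi$ thanks to the uniform convergence of $\varphi_{u_k}$ and $\grad\varphi_{u_k}$; $(P\sigma_{u_k}-A)\h(\varphi_{u_k})\to(P\sigma-A)\h(\varphi)$ strongly in $L^2(L^2)$; and, crucially, $u_k\h(\varphi_{u_k})\wto\u\,\h(\varphi)$ in $L^2(L^2)$ because $u_k\wto\u$ weakly while $\h(\varphi_{u_k})\to\h(\varphi)$ uniformly. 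The linear Brinkman theory \cite[Lemma~1.5]{EbenbeckGarcke} then identifies $(\v,p)$ as the solution generated by the limiting data. Hence $(\varphi,\mu,\sigma,\v,p)$ is a strong solution of (CHB) for the control $\u$, and by Theorem~\ref{THM:UNS} it coincides with $(\varphi_\u,\mu_\u,\sigma_\u,\v_\u,p_\u)$. Since this limit does not depend on the extracted subsequence, a standard subsequence-of-subsequence argument upgrades all the convergences to the full sequence. The hard part will be the limit passage in the nonlinear couplings: one must ensure that in each product of two a~priori only weakly convergent quantities at least one factor converges strongly in a sufficiently strong topology --- which is precisely what the Aubin--Lions compactness for $\varphi$ and the elliptic strong convergence for $\sigma$ are there to provide.
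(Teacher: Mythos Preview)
Your argument is correct and follows the standard compactness route that the paper defers to \cite[Lem.\,8]{EbenbeckKnopf}: uniform $\V_1$-bounds from Proposition~\ref{THM:EXS}, weak(-star) limits via Banach--Alaoglu, Aubin--Lions--Simon for the strong convergence of $\varphi_{u_k}$, and then identification of the limit as $\S(\u)$ through the uniqueness in Theorem~\ref{THM:UNS}. One small point worth tightening: when you subtract \eqref{EQ:CHB5} ``for $\sigma_{u_k}$ and for $\sigma$'' to deduce strong convergence of the nutrient, you are implicitly using that $\sigma$ already satisfies the limiting elliptic equation with coefficient $\h(\varphi)$ --- make explicit that this follows first by a weak passage to the limit in \eqref{EQ:CHB5} (pairing $\h(\varphi_{u_k})\to\h(\varphi)$ uniformly with $\sigma_{u_k}\wto\sigma$); after that your subtraction argument is clean.
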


\begin{proof} 
	The assertion follows with exactly the same arguments as the proof of \cite[Lem.\,8]{EbenbeckKnopf}.
\end{proof}	

\begin{remark}
	This result actually means weak compactness of the control-to-state operator restricted to $\U$ since any bounded sequence in $\U$ has a weakly convergent subsequence according to the Banach-Alaoglu theorem. However, this property can not be considered as weak continuity as the extraction of a subsequence is necessary.
\end{remark}
	
\subsection{The linearized system}  

We want to show that the control-to-state operator is also Fréchet differentiable on the open ball $\UR$ (and therefore especially on its strict subset $\U$). Since the Fréchet derivative is a linear approximation of the control-to-state operator at some certain point $u\in\UR$, it will be given by a linearized version of \eqref{EQ:CHB}:
\begin{subequations}
	\label{EQ:LIN}
\begin{empheq}[left=\textnormal{(LIN)}\empheqlbrace]{align}
\label{EQ:LIN1}
\divergence(\v) &= P\sigma\h(\varphi_u) + (P\sigma_u-A)\h'(\varphi_u)\varphi + F_1&&\text{in}\; \OmegaT,\\
\label{EQ:LIN2}
-\divergence(\T(\v,p)) + \nu\v &= (\mu_u+\chi\sigma_u)\grad\varphi + (\mu+\chi\sigma)\grad\varphi_u + \F &&\text{in}\; \OmegaT,\\
\label{EQ:LIN3}
\delt\varphi + \divergence(\varphi_u\v) + \divergence(\varphi\v_u) &= m\laplace\mu + (P\sigma_u - A - u)\h'(\varphi_u)\varphi \notag\\
&\qquad + P\sigma\h(\varphi_u) + F_2&&\text{in}\; \OmegaT,\\
\label{EQ:LIN4}
\mu &= -\laplace\varphi + \psi''(\varphi_u)\varphi - \chi\sigma + F_3 &&\text{in}\; \OmegaT,\\
\label{EQ:LIN5}
-\laplace\sigma - b\sigma + \h'(\varphi_u)\varphi\sigma_u + \h(\varphi_u)\sigma &=  F_4 &&\text{in}\; \OmegaT,\\[2mm]
\label{EQ:LIN6}
\deln\mu=\deln\varphi = \deln\sigma &= 0 &&\text{in}\; \GammaT,\\
\label{EQ:LIN7}
\T(\v,p)\n &= 0 &&\text{in}\; \GammaT,\\[2mm]
\label{EQ:LIN9}
\varphi(0) &= 0	&&\text{in}\; \Omega,
\end{empheq}
\end{subequations}
where $F_i:\OmegaT\to\R,\,1\le i\le 4$ and $\F:\OmegaT\to\R^3$ are given functions that will be specified later on. A strong solution of this linearized system is defined as follows:
\begin{definition}
	Let $u\in\UR$ be arbitrary. Then a quintuplet $(\varphi,\mu,\sigma,\v,p)$ is called a strong solution of \eqref{EQ:LIN} if
	it lies in $\V_1$ and satisfies \eqref{EQ:LIN} almost everywhere in the respective sets.
\end{definition}
Existence and uniqueness of strong solutions to this linearized system is established by the following lemma:
\begin{proposition}
	\label{PROP:LIN}
	Let $u\in\UR$ be any control and let $(\varphi,\mu,\sigma,\v,p)$ denote its corresponding state. Moreover, let $(\F,F_1,F_2,F_3,F_4)\in \V_2$
%	$\F\in L^2(\L^2)$, $F_1,F_3\in L^2(H^1)$ and $F_2,F_4\in L^2(L^2)$ 
	be arbitrary. Then the system \eqref{EQ:LIN}
	has a unique strong solution $(\varphi,\mu,\sigma,\v,p)\in\V_1$. Moreover, there exists some constant $C>0$ depending only on the system parameters and on $R$, $\Omega$, $\Gamma$ and $T$ such that:
	\begin{align}
	\label{EST:LIN}
		\norm{(\varphi,\mu,\sigma,\v,p)}_{\V_1} \le C\norm{(\F,F_1,F_2,F_3,F_4)}_{\V_2}.
	\end{align}
\end{proposition}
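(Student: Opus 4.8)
The plan is to establish existence and uniqueness of strong solutions to the linear system (LIN) by a Galerkin-type / fixed-point argument combined with a priori estimates, mirroring the structure used for the nonlinear system (CHB) in Proposition~\ref{THM:EXS} and Theorem~\ref{THM:UNS}. Since the system is linear, uniqueness will follow from the a priori estimate \eqref{EST:LIN} applied to the difference of two solutions with zero data, so the essential work is (i) deriving the estimate \eqref{EST:LIN} and (ii) constructing a solution realizing it. For the construction, I would either set up a Faedo-Galerkin scheme (using eigenfunctions of the Neumann-Laplacian for $\varphi,\mu,\sigma$ and a Stokes-type basis adapted to \eqref{EQ:CHB7} for $\v$) or, perhaps more cleanly, decouple the system: given $\varphi$, equations \eqref{EQ:LIN5}, \eqref{EQ:LIN1}--\eqref{EQ:LIN2} determine $\sigma$, then $(\v,p)$ via elliptic/Brinkman theory (using \cite[Lemma 1.5]{EbenbeckGarcke}), and \eqref{EQ:LIN3}--\eqref{EQ:LIN4} form a convective Cahn-Hilliard problem for $(\varphi,\mu)$; a contraction or Schauder argument on $\varphi$ then closes the loop. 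The coefficients $\varphi_u,\mu_u,\sigma_u,\v_u,p_u$ are fixed and bounded by Proposition~\ref{THM:EXS} and its Corollary (in particular $\varphi_u\in C(\overline{\OmegaT})\cap C([0,T];H^2)$, which is what makes the products like $\psi''(\varphi_u)\varphi$ and $\h'(\varphi_u)\varphi\sigma_u$ controllable).

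The a priori estimates proceed in two steps exactly as in the proof of Theorem~\ref{THM:UNS}. In Step~1 I would obtain the basic estimate: test \eqref{EQ:LIN5} with $\sigma$ to get $\norm{\sigma}_{H^1}\le C(\norml{2}{\varphi}+\norm{F_4}_{(H^1)^*})$ (using boundedness and nonnegativity of $\h$, and absorbing the bad sign of $-b\sigma$ is not an issue since one tests and uses the $\h(\varphi_u)\sigma$ term plus Young—actually here one must be slightly careful: the sign in \eqref{EQ:LIN5} is $-b\sigma$, so coercivity comes from $\int|\grad\sigma|^2$ plus a Poincaré-type argument or from treating $b\sigma$ as a lower-order perturbation absorbed via Gronwall later). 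Then, testing \eqref{EQ:LIN3} with $(-\laplace_N)^{-1}\varphi$ and \eqref{EQ:LIN4} with $\varphi$ in the usual Cahn-Hilliard fashion, using the $\v_u,\mu_u$ bounds together with the embedding $H^2\subset L^\infty$ for the fixed coefficients, and treating the $\divergence(\varphi_u\v)$ and $(\mu+\chi\sigma)\grad\varphi_u$ couplings with Hölder and Young, one derives via Gronwall the analogue of \eqref{EQ:LIP4}, controlling $\varphi$ in $H^1((H^1)^*)\cap L^\infty(H^1)\cap L^2(H^3)$, $\mu$ in $L^2(H^1)$, $\sigma$ in $L^2(H^1)$, $\v$ in $L^2(\H^1)$, $p$ in $L^2(L^2)$, all by $C\norm{(\F,F_1,\dots,F_4)}_{\V_2}$.

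In Step~2 I would push to the higher-order bounds needed for membership in $\V_1$: elliptic regularity on \eqref{EQ:LIN5} upgrades $\sigma$ to $L^\infty(H^2)$; multiplying \eqref{EQ:LIN3} (with $\mu$ substituted from \eqref{EQ:LIN4}) by $\laplace^2\varphi$ and using $\int\delt\varphi\,\laplace^2\varphi = \frac{\d}{\dt}\frac12\norml{2}{\laplace\varphi}^2$, exactly as in \eqref{EQ:LIP6}, yields after Hölder/Young/Gagliardo-Nirenberg estimates (now the term $\laplace(\psi''(\varphi_u)\varphi)=\psi''(\varphi_u)\laplace\varphi+2\psi'''(\varphi_u)\grad\varphi_u\cdot\grad\varphi+\psi^{(4)}(\varphi_u)|\grad\varphi_u|^2\varphi$ must be estimated, using $\varphi_u\in L^\infty(H^2)$) and a Gronwall argument the bound $\norm{\varphi}_{L^\infty(H^2)\cap L^2(H^4)}\le C\norm{(\F,F_1,\dots,F_4)}_{\V_2}$; then a comparison argument in \eqref{EQ:LIN4} gives $\mu\in L^\infty(L^2)\cap L^2(H^2)$, a comparison in \eqref{EQ:LIN3} gives $\delt\varphi\in L^2(L^2)$, and finally Gagliardo-Nirenberg plus \cite[Lemma 1.5]{EbenbeckGarcke} applied to \eqref{EQ:LIN1}--\eqref{EQ:LIN2} (checking the right-hand sides lie in $L^8(H^1)$ resp.\ $L^8(\L^2)$) gives $\v\in L^8(\H^2)$, $p\in L^8(H^1)$. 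Collecting everything yields \eqref{EST:LIN}, and uniqueness follows by linearity.

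I expect the main obstacle to be the rigorous existence part rather than the estimates: the coupling between the Cahn-Hilliard block and the Brinkman block through $\grad\varphi$ and $\divergence(\varphi\v)$ means a naive Galerkin scheme needs compatible finite-dimensional spaces, and one has to verify that the a priori bounds are uniform in the discretization parameter so as to pass to the limit. The decoupling/fixed-point route avoids this but requires a careful choice of the function space in which the map $\varphi\mapsto\varphi$ is contractive (likely $L^2(H^1)$ or $C([0,T];L^2)$ on a short interval, then bootstrapped to $[0,T]$ by the linear a priori estimate). Since all of this is a linear and strictly easier variant of arguments already carried out in \cite{EbenbeckKnopf} and \cite{EbenbeckGarcke2}, I would phrase the proof as: "the a priori estimates are obtained exactly as in Step~1 and Step~2 of the proof of Theorem~\ref{THM:UNS} with the differences $\h(\varphi_u)-\h(\varphi_\tu)$, $\psi'(\varphi_u)-\psi'(\varphi_\tu)$ replaced by their linearizations $\h'(\varphi_u)\varphi$, $\psi''(\varphi_u)\varphi$ and the extra data $F_i,\F$ carried along; existence then follows by a standard Galerkin approximation, for which these very estimates provide the required compactness."
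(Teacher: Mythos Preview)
Your proposal is correct and follows essentially the same route as the paper: a Galerkin scheme (the paper approximates only $\varphi,\mu$ and solves for $\sigma,\v,p$ in the full spaces, citing \cite[Sec.~3.5]{EbenbeckKnopf}), then the two-step a priori estimate exactly as you describe—testing \eqref{EQ:LIN5} with $\sigma$, the $\laplace^2\varphi$ test on \eqref{EQ:LIN3} with the explicit expansion $\laplace(\psi''(\varphi_u)\varphi)=\psi^{(4)}(\varphi_u)|\grad\varphi_u|^2\varphi+\psi'''(\varphi_u)\laplace\varphi_u\,\varphi+2\psi'''(\varphi_u)\grad\varphi_u\cdot\grad\varphi+\psi''(\varphi_u)\laplace\varphi$, comparison arguments, and \cite[Lemma~1.5]{EbenbeckGarcke} for $(\v,p)\in L^8(\H^2)\times L^8(H^1)$. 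Your concern about the sign $-b\sigma$ in \eqref{EQ:LIN5} is well spotted: this is a typo in the statement (the linearization of \eqref{EQ:CHB5} gives $+b\sigma$), and indeed the paper's own estimate (obtaining $\|\grad\sigma\|_{L^2}^2+b\|\sigma\|_{L^2}^2$ on the left) tacitly uses the correct sign.
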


\begin{proof}
The proof can be carried out using a Galerkin scheme by constructing approximate solutions with respect to $\varphi$ and $\mu$ and at the same time solve for $\sigma,\,\v$ and $p$ in the corresponding whole function spaces. For the details, we refer to \cite[Sec. 3.5]{EbenbeckKnopf}
In the following steps, we will show a-priori-estimates for the solutions. All the estimates can be carried out rigorously within the Galerkin scheme. In the following approach, Hölder's and Young's inequalities will be used frequently. 
\paragraph{Step 1:} Multiplying \eqref{EQ:LIN5} with $\sigma$, integrating by parts and using \eqref{EQ:LIN6}, the boundedness of $\h'(\varphi_u)\in L^{\infty}(\OmegaT),\,\sigma_u\in L^{\infty}(L^{\infty})$ and the non-negativity of $\h(\cdot)$ yields
\begin{equation*}
\normL{2}{\grad\sigma}^2 + b\norml{2}{\sigma}^2 = \left|\intO \big(F_4-\h'(\varphi_u)\varphi\sigma_u\big)\sigma\dx\right| \leq\delta_1\normh{1}{\sigma}^2 + C_{\delta_1}\left(\norml{2}{\varphi}^2 + \norml{2}{F_4}^2\right)
\end{equation*}
for $\delta_1>0$ arbitrary. Choosing $\delta_8 = \frac{1}{2}\min\{1,b\}$, this implies that
\begin{equation}
\label{lin_eq_1}\normh{1}{\sigma}\leq C\left(\norml{2}{\varphi} + \norml{2}{F_4}\right).
\end{equation}
Then, using exactly the same arguments as in \cite{EbenbeckKnopf}, it can be shown that
\begin{align}
\nonumber&\norm{\varphi}_{L^{\infty}(H^1)\cap L^2(H^3)} + \norm{\mu}_{L^2(H^1)} + \norm{\sigma}_{L^2(H^1)}+ \norm{\v}_{L^2(\H^1)} + \norm{p}_{L^2(L^2)}\\
\label{lin_eq_2} &\quad \leq C\left(\norm{\grad F_3}_{L^2(\L^2)} + \norm{\F}_{L^2(\L^2)} + \sum_{i=1}^{4}\norm{F_i}_{L^2(\OmegaT)}\right).
\end{align}
\paragraph{Step 2:} We want to establish higher order estimates for $\varphi$, $\mu$ and $\sigma$. With \eqref{EQ:LIN5}-\eqref{EQ:LIN6} and elliptic regularity theory, it follows that
\begin{equation*}
\normh{2}{\sigma}\leq C\left(\norml{2}{\h'(\varphi_u)\varphi\sigma_u} + \norml{2}{F_4}\right).
\end{equation*}
Due to the assumptions on $\h(\cdot)$, using Proposition \ref{THM:EXS} and \eqref{lin_eq_2} implies
\begin{equation}
\label{lin_eq_3}\norm{\sigma}_{L^{\infty}(H^2)}\leq C\left(\norm{\grad F_3}_{L^2(\L^2)} + \norm{\F}_{L^2(\L^2)} + \sum_{i=1}^{3}\norm{F_i}_{L^2(\OmegaT)} + \norm{F_4}_{L^{\infty}(L^2)}\right).
\end{equation}
We now multiply \eqref{EQ:LIN3} with $\laplace^2\varphi$, integrate by parts and insert the expression for $\mu$ given by \eqref{EQ:LIN4} to obtain
\begin{align}
\nonumber \frac{\d}{\dt}\frac{1}{2}\intO |\laplace\varphi|^2\dx + m\intO |\laplace^2\varphi|^2\dx &= \intO \big((P\sigma_u - A - u)\h'(\varphi_u)\varphi + P\sigma\h(\varphi_u) + F_2\big)\laplace^2\varphi\dx\\
\nonumber &\quad - \intO \big(\divergence(\varphi_u\v) + \divergence(\varphi\v_u)\big)\laplace^2\varphi\dx \\
\label{lin_eq_4}&\quad + m\intO \laplace \big(\psi''(\varphi_u)\varphi - \chi\sigma + F_3\big)\laplace^2\varphi\dx.
\end{align}
Using Proposition \ref{THM:EXS}, the assumptions on $\h(\cdot)$, \eqref{lin_eq_3}-\eqref{lin_eq_4} and the continuous embeddings $H^1\subset L^6,\,\H^1\subset \L^6,\,\H^2\subset \L^{\infty}$, we have
\begin{align}
\nonumber &\left|\intO \big((P\sigma_u - A)\h'(\varphi_u)\varphi + P\sigma\h(\varphi_u) + F_2- \divergence(\varphi_u\v) + \divergence(\varphi\v_u)\big)\laplace^2\varphi\dx\right| \\
\label{lin_eq_5}&\quad \leq C\left(1+\normH{2}{\v_u}^2\right)\normh{1}{\varphi}^2 + C\left(\norml{2}{\sigma}^2 + \normH{1}{\v}^2 + \norml{2}{F_2}^2\right) + \frac{m}{8}\norml{2}{\laplace^2\varphi}^2.
\end{align}
Furthermore, it is straightforward to check that
\begin{equation}
\label{lin_eq_6}\left| m\intO \laplace \big(- \chi\sigma + F_3\big)\laplace^2\varphi\dx\right| \leq C\left(\normh{2}{\sigma}^2 + \normh{2}{F_3}^2\right) + \frac{m}{8}\norml{2}{\laplace^2\varphi}^2.
\end{equation}
Now, using elliptic regularity theory, the Sobolev embedding $H^2\subset L^{\infty}$, the assumptions on $\h(\cdot)$ and Proposition \ref{THM:EXS}, we calculate
\begin{equation}
\label{lin_eq_7}\left|\intO u\h'(\varphi_u)\varphi\laplace^2\varphi\dx \right| \leq C\norml{2}{u}\normh{2}{\varphi}\norml{2}{\laplace^2\varphi}\leq C\norml{2}{u}^2\left(\norml{2}{\varphi}^2 + \norml{2}{\laplace\varphi}^2\right)+ \frac{m}{8}\norml{2}{\laplace^2\varphi}^2.
\end{equation}
Next, we observe that
\begin{equation*}
\laplace(\psi''(\varphi_u)\varphi) = \psi^{(4)}(\varphi_u)|\grad\varphi_u|^2\varphi + \psi'''(\varphi_u)\laplace\varphi_u\varphi + 2\psi'''(\varphi_u)\grad\varphi_u\cdot\grad\varphi + \psi''(\varphi_u)\laplace\varphi.
\end{equation*}
Using the Sobolev embeddings $H^1\subset L^6,\,H^2\subset L^{\infty},\,\H^1\subset \L^6$, the assumptions on $\psi(\cdot)$, Proposition \ref{THM:EXS} and elliptic regularity theory again, we obtain
\begin{equation}
\label{lin_eq_8}\norml{2}{\laplace(\psi''(\varphi_u)\varphi)}^2\leq C\left(\norml{2}{\varphi}^2 + \norml{2}{\laplace\varphi}^2\right).
\end{equation}
Consequently, 
\begin{equation}
\label{lin_eq_9}\left|m\intO \laplace(\psi''(\varphi_u)\varphi)\laplace^2\varphi\dx \right|\leq C\norml{2}{\laplace(\psi''(\varphi_u)\varphi)}^2 + \frac{m}{8}\norml{2}{\laplace^2\varphi}^2\leq C\left(\norml{2}{\varphi}^2 + \norml{2}{\laplace\varphi}^2\right) +\frac{m}{8}\norml{2}{\laplace^2\varphi}^2.
\end{equation}
Plugging in \eqref{lin_eq_5}-\eqref{lin_eq_9} into \eqref{lin_eq_4}, we obtain
\begin{align*}
 \frac{\d}{\dt}\frac{1}{2}\intO |\laplace\varphi|^2\dx + \frac{m}{2}\intO |\laplace^2\varphi|^2\dx &\leq C\left(1+\normH{2}{\v_u}^2\right)\normh{1}{\varphi}^2 + C\left(\normh{2}{\sigma}^2 + \normH{1}{\v}^2 + \norml{2}{F_2}^2 + \normh{2}{F_3}^2\right) \\
 &\quad + C\left(1+\norml{2}{u}^2\right)\left(\norml{2}{\varphi}^2 + \norml{2}{\laplace\varphi}^2\right).
\end{align*}
Integrating this inequality in time from $0$ to $T$, using \eqref{lin_eq_2}-\eqref{lin_eq_3}, Proposition \ref{THM:EXS} and elliptic regularity theory, we end up with
\begin{align}
\nonumber&\norm{\varphi}_{L^{\infty}(H^2)\cap L^2(H^3)} + \norm{\laplace^2\varphi}_{L^2(L^2)} + \norm{\mu}_{L^2(H^1)} + \norm{\sigma}_{L^{\infty}(H^2)}+ \norm{\v}_{L^2(\H^1)} + \norm{p}_{L^2(L^2)}\\
\label{lin_eq_10} &\quad \leq C\left(\norm{\F}_{L^2(\L^2)} + \sum_{i=1}^{2}\norm{F_i}_{L^2(\OmegaT)} + \norm{F_3}_{L^2(H^2)}+ \norm{F_4}_{L^{\infty}(L^2)}\right).
\end{align}
Now, using elliptic regularity theory and \eqref{EQ:LIN4}, \eqref{EQ:LIN6}, \eqref{lin_eq_8} and \eqref{lin_eq_10}, we deduce that
\begin{equation}
\label{lin_eq_11}\norm{\mu}_{L^2(H^2)}\leq C\left(\norm{\F}_{L^2(\L^2)} + \sum_{i=1}^{2}\norm{F_i}_{L^2(\OmegaT)} + \norm{F_3}_{L^2(H^2)}+ \norm{F_4}_{L^{\infty}(L^2)}\right).
\end{equation}
Furthermore, using Proposition \ref{THM:EXS}, the assumptions on $\psi(\cdot)$ and \eqref{lin_eq_10}, a comparison argument in \eqref{EQ:LIN4} yields
\begin{equation}
\label{lin_eq_12}\norm{\mu}_{L^{\infty}(L^2)}\leq C\, C_F
\end{equation}
where
\begin{equation}
	C_F \coloneqq \left(\norm{\F}_{L^2(\L^2)} + \sum_{i=1}^{2}\norm{F_i}_{L^2(\OmegaT)} + \norm{F_3}_{L^{\infty}(L^2)\cap L^2(H^2)}+ \norm{F_4}_{L^{\infty}(L^2)}\right).
\end{equation}
Now, using elliptic regularity, Proposition \ref{THM:EXS}, the assumptions on $\psi(\cdot)$ and \eqref{lin_eq_8}, \eqref{lin_eq_10}, one can check that
\begin{equation*}
\norm{\psi''(\varphi_u)\varphi}_{L^2(H^2)} \leq C\, C_F.
\end{equation*}
Hence, using elliptic regularity theory again and recalling \eqref{lin_eq_10}-\eqref{lin_eq_11}, from \eqref{EQ:LIN4} we deduce 
\begin{equation}
\label{lin_eq_13}\norm{\varphi}_{L^2(H^4)}\leq C\, C_F.
\end{equation}
A further comparison in \eqref{EQ:LIN3} together with \eqref{lin_eq_10}-\eqref{lin_eq_13} yields
\begin{equation}
\label{lin_eq_14}\norm{\delt\varphi}_{L^2(L^2)}\leq C\, C_F.
\end{equation}
Summarising \eqref{lin_eq_10}-\eqref{lin_eq_13}, we showed that
\begin{align}
\label{lin_eq_15}
\norm{\varphi}_{H^1(L^2)\cap L^{\infty}(H^2)\cap L^2(H^4)} + \norm{\mu}_{L^{\infty}(L^2)\cap L^2(H^2)} + \norm{\sigma}_{L^{\infty}(H^2)}+ \norm{\v}_{L^2(\H^1)} + \norm{p}_{L^2(L^2)} 
	\le C\, C_F.
\end{align}
\paragraph{Step 3:} Now, we also want to prove higher order estimates for $\v$ and $p$. Using Proposition \ref{THM:EXS}, the assumptions on $\h(\cdot)$ and \eqref{lin_eq_15}, a straightforward calculation shows that
\begin{align}
\label{lin_eq_16}
\norm{P\sigma\h(\varphi_u) + (P\sigma_u-A)\h'(\varphi_u)\varphi}_{L^8(H^1)} \le C\, C_F.
\end{align}
Using Gagliardo-Nirenberg's inequality, we have the continuous embedding
\begin{equation*}
L^{\infty}(\H^1)\cap L^2(\H^3)\hookrightarrow L^8(\L^{\infty}).
\end{equation*}
Together with Proposition \ref{THM:EXS} and \eqref{lin_eq_15}, this implies that
\begin{align}
\label{lin_eq_17}
\norm{(\mu_u+\chi\sigma_u)\grad\varphi + (\mu+\chi\sigma)\grad\varphi_u}_{L^8(\L^2)}\le C \, C_F.
\end{align}
Using \eqref{lin_eq_16}-\eqref{lin_eq_17} and recalling \eqref{lin_eq_15}, an application of \cite[Lemma 1.5]{EbenbeckGarcke} to \eqref{EQ:LIN1}-\eqref{EQ:LIN2}, \eqref{EQ:LIN7} yields
\begin{align}
\label{lin_eq_18}
\norm{(\varphi,\mu,\sigma,\v,p)}_{\V_1} \le C\norm{(\F,F_1,F_2,F_3,F_4)}_{\V_2},
\end{align}
hence we showed \eqref{EST:LIN}.
\paragraph{Step 4:} Due to (\ref{lin_eq_18}), we can pass to the limit in the Galerkin scheme to deduce that \eqref{EQ:LIN} holds. The initial condition is attained due to the compact embedding $H^1((H^1)^*)\cap L^{\infty}(H^1)\hookrightarrow C([0,T];L^2)$ (see \cite[sect.\,8, Cor.\,4]{Simon}). Moreover, the estimate \eqref{EST:LIN} results from the weak-star lower semicontinuity of the $\V_1$-norm. Finally, uniqueness follows from linearity of the system together with (\ref{EST:LIN}).
\end{proof}

\subsection{Fréchet differentiability}

Now, this result can be used to prove Fréchet differentiability of the control-to-state operator:
\begin{proposition}
	\label{LEM:FRE} The following statements hold:
	\begin{enumerate}
	\itemi
	The control-to-state operator $S$ is Fréchet differentiable on $\UR$, i.e., for any $u\in\UR$ there exists a unique bounded linear operator 
	\begin{align*}
		\S'(u): L^2(L^2) \to \V_1, \quad h \mapsto \S'(u)[h]=\big(\varphi'_u,\mu'_u,\v'_u,\sigma'_u,p'_u\big)[h],
	\end{align*}
	such that
	\begin{align*}
		\frac{\norm{\S(u+h)-\S(u)-\S'(u)[h]}_{\V_1}}{\norm{h}_{L^2(L^2)}} \to 0 \qquad\text{as}\; \norm{h}_{L^2(L^2)}\to 0.
	\end{align*}
	For any $u\in\U$ and $h\in L^2(L^2)$, the Fréchet derivative $\big(\varphi'_u,\mu'_u,\v'_u,\sigma'_u,p'_u\big)[h]$ is the unique strong solution of the system \eqref{EQ:LIN} with 
	\begin{align*}
		F_1,F_3,F_4=0,\quad \F=\mathbf{0} \tand F_2= -h\,\h(\varphi_u).
	\end{align*}
	\itemii The Frechet-derivative is Lipschitz continuous, i.e., for any $u,\tu\in \UR$ and $h\in L^2(L^2)$, it holds that
	\begin{equation}
	\label{LIP:EST:FRE:DER} \norm{\S'(u)[\cdot]-\S'(\tu)[\cdot]}_{\mathcal L(L^2(L^2);\V_1)} \leq L_2 \norm{u-\tu}_{L^2(L^2)},
	\end{equation}
	with a constant $L_2>0$ depending only on the system parameters and on $\Omega$, $T$, $R$.
\end{enumerate}
\end{proposition}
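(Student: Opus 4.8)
The plan is to reduce both parts to the a-priori estimate \eqref{EST:LIN} for the linearized system \eqref{EQ:LIN} together with the Lipschitz bound of Corollary~\ref{COR:LIP}; no new energy estimate is needed.

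For (i), I would fix $u\in\UR$ and, for $h\in L^2(L^2)$ small enough that $u+h\in\UR$, let $(\xi,\theta,\rho,\w,q)$ be the solution of \eqref{EQ:LIN} at the control $u$ with $F_1=F_3=F_4=0$, $\F=\mathbf 0$ and $F_2=-h\,\h(\varphi_u)$. This is well defined by Proposition~\ref{PROP:LIN} (note $(\mathbf 0,0,-h\,\h(\varphi_u),0,0)\in\V_2$ by (A3)), it is linear in $h$, and \eqref{EST:LIN} with $|\h|\le 1$ gives $\norm{(\xi,\theta,\rho,\w,q)}_{\V_1}\le C\norm{h}_{L^2(L^2)}$, so $\S'(u)[h]:=(\xi,\theta,\rho,\w,q)$ defines a bounded linear operator $L^2(L^2)\to\V_1$ — the candidate derivative. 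Then I would write $(\varphi^h,\dots):=\S(u+h)$, $(\varphi,\dots):=\S(u)$, set $(\varphi^r,\mu^r,\sigma^r,\v^r,p^r):=\S(u+h)-\S(u)-\S'(u)[h]$, and subtract, equation by equation, the system \eqref{EQ:CHB} at $u$ and the equations defining $\S'(u)[h]$ from \eqref{EQ:CHB} at $u+h$. Each nonlinearity is expanded as its linearization applied to the full increments $\varphi^h-\varphi=\xi+\varphi^r$, $\sigma^h-\sigma=\rho+\sigma^r$, $\v^h-\v=\w+\v^r$, etc., plus a Taylor remainder; since $\h\in C^3_b(\R)$ and $\psi$ is a polynomial, and since the terms linear in $(\xi,\rho,\w,\dots)$ as well as the $-h\,\h(\varphi_u)$ contribution of the control increment are matched exactly by the equations for $\S'(u)[h]$, the remainder $(\varphi^r,\mu^r,\sigma^r,\v^r,p^r)$ turns out to be a strong solution of \eqref{EQ:LIN} at the control $u$ with right-hand sides $(\F^r,F_1^r,F_2^r,F_3^r,F_4^r)$ that are of second order in $\norm{h}_{L^2(L^2)}$: typical contributions are the cross term $P(\sigma^h-\sigma)\big(\h(\varphi^h)-\h(\varphi)\big)$, the second-order term $(P\sigma_u-A-u)\big(\h(\varphi^h)-\h(\varphi)-\h'(\varphi_u)(\varphi^h-\varphi)\big)$, the control cross term $-h\,\h'(\varphi_u)(\varphi^h-\varphi)$, and $\psi'(\varphi^h)-\psi'(\varphi)-\psi''(\varphi_u)(\varphi^h-\varphi)=3\varphi_u(\varphi^h-\varphi)^2+(\varphi^h-\varphi)^3$ in the $\mu$-equation. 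Estimating these in the norms prescribed by $\V_2$ — using $H^2\subset L^\infty$, $\H^1\subset\L^6$, the algebra property of $H^2$, the time-interpolation bounds already used in Propositions~\ref{THM:EXS} and~\ref{PROP:LIN}, the $C(\overline{\OmegaT})$-bound for $\varphi_u$ and $\varphi^h-\varphi$, and Corollary~\ref{COR:LIP} — should give $\norm{(\F^r,\dots,F_4^r)}_{\V_2}\le C\norm{h}_{L^2(L^2)}^2$. Then \eqref{EST:LIN} applied to the remainder system yields $\norm{\S(u+h)-\S(u)-\S'(u)[h]}_{\V_1}\le C\norm{h}_{L^2(L^2)}^2=o(\norm{h}_{L^2(L^2)})$, which proves Fréchet differentiability and identifies $\S'(u)$.

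For (ii), I would take $u,\tu\in\UR$ and $h\in L^2(L^2)$, put $(\xi,\dots):=\S'(u)[h]$, $(\tilde\xi,\dots):=\S'(\tu)[h]$, and subtract the two instances of \eqref{EQ:LIN}. By bilinearity, the difference $(\xi-\tilde\xi,\dots)$ solves \eqref{EQ:LIN} at the control $u$ with forcing of the schematic form (coefficient difference)$\times$(component of $\S'(\tu)[h]$) together with $-h\big(\h(\varphi_u)-\h(\varphi_\tu)\big)$, where the coefficient differences are $\h(\varphi_u)-\h(\varphi_\tu)$, $\h'(\varphi_u)-\h'(\varphi_\tu)$, $\psi''(\varphi_u)-\psi''(\varphi_\tu)$, $\sigma_u-\sigma_\tu$, $\mu_u-\mu_\tu$, $\v_u-\v_\tu$. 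Corollary~\ref{COR:LIP} and the $C(\overline{\OmegaT})$-bound control every coefficient difference by $C\norm{u-\tu}_{L^2(L^2)}$ in the relevant norm, while part (i) gives $\norm{\S'(\tu)[h]}_{\V_1}\le C\norm{h}_{L^2(L^2)}$; hence the forcing is $\le C\norm{u-\tu}_{L^2(L^2)}\norm{h}_{L^2(L^2)}$ in $\V_2$, and \eqref{EST:LIN} gives $\norm{\S'(u)[h]-\S'(\tu)[h]}_{\V_1}\le C\norm{u-\tu}_{L^2(L^2)}\norm{h}_{L^2(L^2)}$. Since $C$ does not depend on $h$, taking the supremum over $\norm{h}_{L^2(L^2)}\le 1$ gives \eqref{LIP:EST:FRE:DER}.

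The conceptual content is short; the real work, as in \cite{EbenbeckKnopf}, is the bookkeeping in (i): one must verify that each Taylor remainder genuinely carries two small factors — so that no uncontrolled term linear in the unknown remainder $(\varphi^r,\dots)$ survives on the right-hand side (or, if a small-coefficient linear term does appear, that it can be reabsorbed for $\norm{h}_{L^2(L^2)}$ small) — and that each remainder lies in the correct, rather non-standard slot of $\V_2$, in particular the $L^8$-in-time spaces for $\v$ and $p$ (handled via \cite[Lemma 1.5]{EbenbeckGarcke} for the Brinkman subsystem), the $L^\infty(H^2)$-space for $\sigma$, and the $L^\infty(L^2)\cap L^2(H^2)$-space in the $F_3$-slot. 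These verifications are routine but technical.
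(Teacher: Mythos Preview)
Your proposal is correct and follows essentially the same approach as the paper: for (i), you define the candidate derivative as the solution of \eqref{EQ:LIN} with $F_2=-h\,\h(\varphi_u)$, recognize that the remainder $\S(u+h)-\S(u)-\S'(u)[h]$ solves \eqref{EQ:LIN} with forcing given by the quadratic Taylor remainders $\CR_i$ (which the paper lists explicitly), and then invoke \eqref{EST:LIN} together with Corollary~\ref{COR:LIP} to get the $O(\norm{h}^2)$ bound; for (ii), you subtract the two linearized systems and apply \eqref{EST:LIN} to the difference with coefficient-difference forcing, exactly as the paper does. One small remark: your parenthetical worry about a linear-in-$(\varphi^r,\dots)$ term possibly surviving and needing reabsorption is unnecessary --- since the nonlinearities are Taylor-expanded around $\varphi_u$, the linear parts match the coefficients of \eqref{EQ:LIN} exactly, and the forcing depends only on the \emph{full} increments $\varphi_{u+h}-\varphi_u$, etc., which are already controlled by Corollary~\ref{COR:LIP}.
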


\begin{proof}
	Let $C$ denote a generic nonnegative constant that depends only on $R$, $\Omega$ and $T$ and may change its value from line to line. 
	\paragraph{Proof of \textnormal{(i)}:}
	 To prove Fréchet differentiability we must consider the difference 
	\begin{align*}
		(\varphi,\mu,\sigma,\v,p):=(\varphi_{u+h},\mu_{u+h},\v_{u+h},\sigma_{u+h},p_{u+h})-(\varphi_u,\mu_u,\sigma_u,\v_u,p_u)
	\end{align*}
	for some arbitrary $u\in\UR$ and $h\in L^2(L^2)$ with $u+h\in\UR$. Therefore, we assume that $\norm{h}_{L^2(L^2)}<\delta$ for some sufficiently small $\delta>0$. Now, we Taylor expand the nonlinear terms in \eqref{EQ:CHB} to pick out the linear contributions. We obtain that
	\begin{align*}
		\h(\varphi_{u+h}) - \h(\varphi_{u}) &= \h'(\varphi_u)\varphi + \CR_1, \\
		\sigma_{u+h}\h(\varphi_{u+h}) - \sigma_{u}\h(\varphi_{u}) &= \sigma\h(\varphi_{u}) + \sigma_{u}\h'(\varphi_u)\varphi + \CR_2, \\
		(u+h)\h(\varphi_{u+h}) - u\h(\varphi_{u}) &= u\h'(\varphi_u)\varphi + h\h(\varphi_u) + \CR_3,\\
		(\mu_{u+h}+\chi\sigma_{u+h})\grad\varphi_{u+h} - (\mu_{u}+\chi\sigma_{u})\grad\varphi_{u} &= (\mu_{u}+\chi\sigma_{u})\grad\varphi + (\mu+\chi\sigma)\grad\varphi_{u} + \CR_4, \\
		\divergence(\varphi_{u+h}\v_{u+h}) - \divergence(\varphi_{u}\v_{u}) &= \divergence(\varphi\v_{u}) + \divergence(\varphi_{u}\v) +\CR_5,\\
		\psi'(\varphi_{u+h}) - \psi'(\varphi_{u}) &= \psi''(\varphi_{u})\varphi + \CR_6,
	\end{align*}
	where the nonlinear remainders are given by
	\begin{align*}
		\CR_1&:= \tfrac 1 2 \h''(\zeta)(\varphi_{u+h} - \varphi_{u})^2,	\\
		\CR_2&:= (\sigma_{u+h}-\sigma_u)(\h(\varphi_{u+h}) - \h(\varphi_{u})) + \tfrac 1 2 \sigma_u\h''(\zeta)(\varphi_{u+h} - \varphi_{u})^2,\\
		\CR_3&:= \tfrac 1 2 u\,\h''(\zeta)(\varphi_{u+h} - \varphi_{u})^2 + h\big(\h(\varphi_{u+h})-\h(\varphi_{u})\big),\\
		\CR_4&:= \big[(\mu_{u+h} - \mu_{u}) +\chi(\sigma_{u+h}-\sigma_{u})\big]
		(\grad\varphi_{u+h} - \grad\varphi_{u}),\\	
		\CR_5&:= \divergence\big[(\varphi_{u+h} - \varphi_{u})(\v_{u+h} - \v_{u})\big],\\
		\CR_6&:= \tfrac 1 2 \psi'''(\xi) (\varphi_{u+h} - \varphi_{u})^2	
	\end{align*}
	with $\zeta=\vartheta\varphi_{u+h}+(1-\vartheta)\varphi_{u}$ and $\xi=\theta\varphi_{u+h}+(1-\theta)\varphi_{u}$ for some $\vartheta,\theta\in[0,1]$. This means that the difference $(\varphi,\mu,\sigma,\v,p)$ is the strong solution of \eqref{EQ:LIN} with
	\begin{gather*}
		F_1 = P\CR_2 - A\CR_1,\quad \F = \CR_4,\quad F_2 =  P\CR_2 - A\CR_1 - \CR_3-\CR_5 -h\,\h(\varphi_u),\quad F_3 = \CR_6,\quad F_4=-\CR_2.
	\end{gather*}
	By a simple computation, one can show that these functions have the desired regularity. Now, we write $(\varphi_u^h,\mu_u^h,\v_u^h,\sigma_u^h,p_u^h)$ to denote the strong solution of \eqref{EQ:LIN} with 
	\begin{align*}
		F_1,F_3,F_4=0,\quad \F=\mathbf{0} \tand F_2= -h\,\h(\varphi_u).
	\end{align*}
	and $(\varphi_{\CR}^h,\mu_{\CR}^h,\v_{\CR}^h,\sigma_{\CR}^h,p_{\CR}^h)$ to denote the strong solution of \eqref{EQ:LIN} with 
	\begin{align}
	\label{EQ:F}
		F_1 = P\CR_2 - A\CR_1,\;\; \F = \CR_4,\;\;\, F_2 =  P\CR_2 - A\CR_1 - \CR_3-\CR_5,\;\;\, F_3 = \CR_6,\;\;\, F_4=-\CR_2.
	\end{align}
	Because of linearity of the system \eqref{EQ:LIN} and uniqueness of its solution, it follows that
	\begin{align*}
		(\varphi_{u+h},\mu_{u+h},\v_{u+h},\sigma_{u+h},p_{u+h}) - (\varphi_u,\mu_u,\sigma_u,\v_u,p_u) - (\varphi_u^h,\mu_u^h,\v_u^h,\sigma_u^h,p_u^h)
			=(\varphi_{\CR}^h,\mu_{\CR}^h,\v_{\CR}^h,\sigma_{\CR}^h,p_{\CR}^h).
	\end{align*}
	We conclude from Proposition \ref{THM:EXS} that $\zeta$ and $\xi$ are uniformly bounded. This yields
	\begin{align*}
		\norm{\psi^{(i)}(\zeta)}_{L^\infty(\OmegaT)}\le C\quad\forall\, 1\leq i\leq 4,\quad\tand \norm{\h^{(j)}(\zeta)}_{L^\infty(\OmegaT)}\le C\quad \forall\, 1\leq j\leq 3.
	\end{align*} 
	Moreover, since $\h(\cdot)$ is Lipschitz continuous, it holds that
	\begin{align*}
		\norm{\h(\varphi_{u+h})-\h(\varphi_{u})}_{L^\infty(\OmegaT)} \le C\,\norm{\varphi_{u+h}-\varphi_{u}}_{L^\infty(\OmegaT)} \le C\,\norm{h}_{L^2(L^2)}.
	\end{align*} 
	Together with the Lipschitz estimates from Corollary \ref{COR:LIP} we obtain that
	\begin{align*}
		\norm{\CR_i}_{L^2(L^2)} &\le C\, \norm{h}_{L^2(L^2)}^{2},\quad i\in\{1,2,3,6\},\\
		%\norm{\CR_i}_{L^8(H^1)}&\le C\, \norm{h}_{L^2(L^2)}^{2},\quad i\in\{1,2\}.\\
		\norm{\CR_i}_{L^{\infty}(L^2)}&\le C\, \norm{h}_{L^2(L^2)}^{2},\quad i\in\{1,2,6\}.
	\end{align*} 
	Moreover, we have
	\begin{align}
	\label{EST:R51}
		\norm{(\grad\varphi_{u+h}-\grad\varphi_{u})(\v_{u+h}-\v_{u})}_{L^2(L^2)} \notag& \le \norm{\grad\varphi_{u+h}-\grad\varphi_{u}}_{L^\infty(L^3)}\, \norm{\v_{u+h}-\v_{u}}_{L^2(L^6)} \notag\\
		& \le C\, \norm{\varphi_{u+h}-\varphi_{u}}_{L^\infty(H^1)}\, \norm{\v_{u+h}-\v_{u}}_{L^2(H^1)}
	\end{align} 
	and then Corollary \ref{COR:LIP} yields
	\begin{align*}
	%\label{EST:R52}
		\norm{\CR_5}_{L^2(L^2)} 
		&\le C\,\norm{\varphi_{u+h}-\varphi_{u}}_{L^\infty(\OmegaT)}\, \norm{\v_{u+h}-\v_{u}}_{L^2(H^1)} 
			+ C\,\norm{(\grad\varphi_{u+h}-\grad\varphi_{u})(\v_{u+h}-\v_{u})}_{L^2(L^2)} \notag\\
		&\le C\,\norm{h}_{L^2(L^2)}^2.
	\end{align*} 
	Due to the continuous embedding $L^{\infty}(\H^1)\cap L^2(\H^3)\hookrightarrow L^8(\L^{\infty})$ resulting from Gagliardo-Nirenberg's inequality, an application of Corollary \ref{COR:LIP} gives
	\begin{equation}
	\norm{\CR_4}_{L^8(\L^2)}\leq C\norm{h}_{L^2(L^2)}^2.
	\end{equation}
	Furthermore, we have
	\begin{align*}
		\norm{\grad\CR_6}_{L^2(L^2)} 
		&\le  C\, \norm{\grad\xi}_{L^\infty(L^6)} \, \norm{\varphi_{u+h}-\varphi_u}_{L^\infty(L^6)}^2 
			+ C\, \norm{\grad\varphi_{u+h}-\grad\varphi_u}_{L^2(L^3)} \, \norm{\varphi_{u+h}-\varphi_u}_{L^\infty(L^6)} \\
		&\le  C\, \norm{\grad\xi}_{L^\infty(\H^1)} \, \norm{\varphi_{u+h}-\varphi_u}_{L^\infty(H^1)}^2 
			+ C\, \norm{\varphi_{u+h}-\varphi_u}_{L^2(H^2)} \, \norm{\varphi_{u+h}-\varphi_u}_{L^\infty(H^1)} \\
		&\le C\, \norm{h}_{L^2(L^2)}^{2}
	\end{align*} 
	and
	\begin{align*}
	\norm{\laplace\CR_6}_{L^2(L^2)} 
	\le  C\, \left( 1+ \norm{\xi}_{L^\infty(H^2)}^2\right) \, \norm{\varphi_{u+h}-\varphi_u}_{L^\infty(H^2)}^2 \le C\, \norm{h}_{L^2(L^2)}^{2}.
	\end{align*} 
	From the last two inequalities and elliptic regularity theory, we infer that
	\begin{equation*}
	\norm{\CR_6}_{L^2(H^2)}  \le C\, \norm{h}_{L^2(L^2)}^{2}.
	\end{equation*}
	Now, we first observe that
	\begin{align*}
	\norm{\grad\CR_1}_{L^8(\L^2)} &\leq \norm{\h^{(3)}(\zeta)\grad\zeta\, (\varphi_{u+h} - \varphi_{u})^2}_{L^8(\L^2)} + \norm{2\h''(\zeta)\grad (\varphi_{u+h} - \varphi_{u})\,(\varphi_{u+h} - \varphi_{u})}_{L^8(\L^2)}\\
	&\leq C\norm{\grad \zeta}_{L^{\infty}(\L^6)}\norm{\varphi_{u+h}}_{L^{\infty}(L^6)}^2 + C\norm{\grad (\varphi_{u+h}-\varphi_{u})}_{L^{\infty}(\L^6)}\norm{\varphi_{u+h}-\varphi_{u}}_{L^{\infty}(L^3)}\\
	&\leq C\, \norm{h}_{L^2(L^2)}^{2}.
	\end{align*}
	With similar arguments, it follows that
	\begin{align*}
	\norm{\grad\big(\sigma_u\h''(\zeta)(\varphi_{u+h} - \varphi_{u})^2\big)}_{L^8(\L^2)}&\leq \norm{\grad\sigma_u\h''(\zeta)(\varphi_{u+h} - \varphi_{u})^2}_{L^8(\L^2)} + \norm{2\sigma_u\grad\CR_1}_{L^8(\L^2)} \\
	&\leq C\norm{\grad\sigma_u}_{L^{\infty}(L^6)}\norm{\varphi_{u+h}}_{L^{\infty}(L^6)}^2 + C\norm{\sigma_u}_{L^{\infty}(\OmegaT)}\norm{\grad\CR_1}_{L^8(\L^2)}\\
	&\leq C\, \norm{h}_{L^2(L^2)}^{2}.
	\end{align*}
	From the Lipschitz-continuity of $\h'(\cdot)$, we deduce that
	\begin{equation*}
	\norm{\grad\big((\sigma_{u+h}-\sigma_u)(\h(\varphi_{u+h}) - \h(\varphi_{u}))\big)}_{L^8(\L^2)}\leq C\norm{\sigma_{u+h}-\sigma_u}_{L^{\infty}(H^2)}\norm{\varphi_{u+h}-\varphi_u}_{L^{\infty}(H^2)}\leq C\, \norm{h}_{L^2(L^2)}^{2}.
	\end{equation*}
	The last two inequalities imply
	\begin{equation*}
	\norm{\CR_i}_{L^8(H^1)}\leq C\, \norm{h}_{L^2(L^2)}^{2},\quad i\in\{1,2\}.
	\end{equation*}
	This finally yields
	\begin{gather*}
		\norm{(\F,F_1,F_2,F_3,F_4)}_{\V_2}\le C\, \norm{h}_{L^2(L^2)}^{2},
	\end{gather*}
	where $F_i$ denote the functions given by \eqref{EQ:F}. Hence, due to \eqref{EST:LIN} we obtain that
	\begin{align*}
		\norm{(\varphi_{\CR}^h,\mu_{\CR}^h,\sigma_{\CR}^h,\v_{\CR}^h,p_{\CR}^h)}_{\V_1} \le C\, \norm{h}_{L^2(L^2)}^2,
	\end{align*}
	which completes the proof of (i).
	\paragraph{Proof of \textnormal{(ii)}:} 
	In the following, we write
	\begin{equation*}
	(\varphi,\mu,\sigma,\v,p)\coloneqq \big(\varphi'_u,\mu'_u,\sigma'_u,\v'_u,p'_u\big)[h]-\big(\varphi'_\tu,\mu'_\tu,\sigma'_\tu,\v'_\tu,p'_\tu\big)[h].
	\end{equation*}
	Then, using the mean value theorem, a long but straightforward calculation shows that
	\begin{subequations}
		\label{EQ:FRE:LIP}
	\begin{align}
	\label{EQ:FRE:LIP1}
	\divergence(\v) &= P\sigma\h(\varphi_u) +  (P\sigma_u-A)\h'(\varphi_u)\varphi + F_1&&\text{in}\; \OmegaT,\\
	\label{EQ:FRE:LIP2}
	-\divergence(\T(\v,p)) + \nu\v &= (\mu_u+\chi\sigma_u)\grad\varphi + (\mu+\chi\sigma)\grad\varphi_u + \F &&\text{in}\; \OmegaT,\\
	\label{EQ:FRE:LIP3}
	\delt\varphi + \divergence(\varphi_u\v) + \divergence(\varphi\v_u) &= m\laplace\mu + (P\sigma_u - A - u)\h'(\varphi_u)\varphi \notag\\
	&\qquad + P\sigma\h(\varphi_u) + F_2&&\text{in}\; \OmegaT,\\
	\label{EQ:FRE:LIP4}
	\mu &= -\laplace\varphi + \psi''(\varphi_u)\varphi - \chi\sigma + F_3 &&\text{in}\; \OmegaT,\\
	\label{EQ:FRE:LIP5}
	-\laplace\sigma - b\sigma + \h'(\varphi_u)\varphi\sigma_u + \h(\varphi_u)\sigma &=  F_4 &&\text{in}\; \OmegaT,\\[2mm]
	\label{EQ:FRE:LIP6}
	\deln\mu=\deln\varphi = \deln\sigma &= 0 &&\text{in}\; \GammaT,\\
	\label{EQ:FRE:LIP7}
	\T(\v,p)\n &= 0 &&\text{in}\; \GammaT,\\[2mm]
	\label{EQ:FRE:LIP9}
	\varphi(0) &= 0	&&\text{in}\; \Omega,
	\end{align}
	\end{subequations}
	where
	\begin{align*}
	F_1&\coloneqq P\sigma_\tu'[h](\h(\varphi_u)-\h(\varphi_\tu))+(P\sigma_\tu-A)\varphi_\tu'[h]\h'(\xi)(\varphi_u-\varphi_\tu) + P\varphi_\tu'[h](\sigma_u-\sigma_\tu)\h'(\varphi_u),\\
	\F&\coloneqq \varphi_\tu'[h]\big((\mu_u+\chi\sigma_u)-(\mu_\tu + \chi\sigma_\tu)\big) + (\mu_\tu'[h]+\chi\sigma_\tu'[h])(\grad\varphi_u - \grad\varphi_\tu),\\
	F_2&\coloneqq F_1 - \varphi_\tu'[h](u-\tu)\h'(\varphi_u) - \tu\h'(\xi)(\varphi_u-\varphi_\tu)-h(\h(\varphi_u)-\h(\varphi_\tu))\\
	&\quad -\divergence((\varphi_u-\varphi_\tu)\v_\tu'[h])-\divergence(\varphi_\tu'[h](\v_u-\v_\tu)),\\
	F_3&\coloneqq \varphi_\tu'[h]\psi'''(\xi)(\varphi_u-\varphi_\tu),\\
	F_4&\coloneqq -\sigma_\tu'[h](\h(\varphi_u)-\h(\varphi_\tu))-\varphi_\tu'[h]\big((\sigma_u-\sigma_\tu)\h'(\varphi_u) +\sigma_\tu\h'(\xi)(\varphi_u-\varphi_\tu)\big).
	\end{align*}
	Using the Lipschitz-continuity of $h(\cdot)$ together with \eqref{THM:EXS:EST}, \eqref{LEM:LIP:EST} and \eqref{EST:LIN}, a straightforward calculation shows that
	\begin{align}
	\nonumber\norm{F_i}_{L^2(L^2)}&\leq C\norm{h}_{L^2(L^2)}\norm{u-\tu}_{L^2(L^2)},\quad 1\leq i\leq 4,\\
	\label{lip_est_fre_1}\norm{\F}_{L^2(\L^2)}&\leq C\norm{h}_{L^2(L^2)}\norm{u-\tu}_{L^2(L^2)}.
	\end{align}
	With similar arguments, it follows that
	\begin{equation}
	\label{lip_est_fre_2}\norm{F_1}_{L^8(L^2)}\leq C\norm{h}_{L^2(L^2)}\norm{u-\tu}_{L^2(L^2)}.
	\end{equation}
	Now, using Gagliardo-Nirenberg's inequality, we have the continuous embedding
	\begin{equation*}
	L^{\infty}(H^1)\cap L^2(H^3)\hookrightarrow L^8(L^{\infty}).
	\end{equation*}
	Therefore, be \eqref{LEM:LIP:EST} and \eqref{EST:LIN}, it follows that
	\begin{equation}
	\label{lip_est_fre_3}\norm{\F}_{L^8(\L^2)}\leq C\norm{h}_{L^2(L^2)}\norm{u-\tu}_{L^2(L^2)}.
	\end{equation}
	Using the assumptions on $\h(\cdot)$, \eqref{THM:EXS:EST}, \eqref{LEM:LIP:EST} and \eqref{EST:LIN}, we obtain
	\begin{equation}
	\label{lip_est_fre_4}\norm{\grad F_1}_{L^8(\L^2)}\leq C\norm{h}_{L^2(L^2)}\norm{u-\tu}_{L^2(L^2)}.
	\end{equation}
	From the Lipschitz-continuity of $\h(\cdot)$ and the boundedness of $\h'(\cdot)$, applying \eqref{THM:EXS:EST}, \eqref{LEM:LIP:EST} and \eqref{EST:LIN} yields
	\begin{align}
	\nonumber \norm{F_4}_{L^{\infty}(L^2)} &\leq C\norm{\sigma_\tu'[h]}_{L^{\infty}(L^2)}\norm{\varphi_u-\varphi_\tu}_{L^{\infty}(\OmegaT)} + C\norm{\varphi_\tu'[h]}_{L^{\infty}(L^2)}\left(\norm{\sigma_u-\sigma_\tu}_{L^{\infty}(\OmegaT)}+\norm{\varphi_u-\varphi_\tu}_{L^{\infty}(\OmegaT)}\right)\\
	\label{lip_est_fre_5}&\leq C\norm{h}_{L^2(L^2)}\norm{u-\tu}_{L^2(L^2)}.
	\end{align}
	It remains to estimate the term $F_3$. Using the boundedness of $\psi'''(\xi)\in L^{\infty}(\OmegaT)$, \eqref{LEM:LIP:EST} and \eqref{EST:LIN}, we deduce that
	\begin{equation}
	\label{lip_est_fre_6}\norm{F_3}_{L^{\infty}(L^2)}\leq C\norm{\varphi_\tu'[h]}_{L^{\infty}(\OmegaT)}\norm{\varphi_u-\varphi_\tu}_{L^{\infty}(\OmegaT)}\leq C\norm{h}_{L^2(L^2)}\norm{u-\tu}_{L^2(L^2)}.
	\end{equation} 
	Using the assumptions on $\psi(\cdot)$ and the Sobolev embeddings $H^1\subset L^p,\,\H^1\subset \L^p,\,p\in [1,6]$, thanks to \eqref{THM:EXS:EST}, \eqref{LEM:LIP:EST} and \eqref{EST:LIN} we have
	\begin{align}
	\nonumber \norm{\grad F_3}_{L^2(\L^2)} &= \norm{\grad\varphi_\tu'[h]\psi'''(\xi)(\varphi_u-\varphi_\tu) + \varphi_\tu'[h]\psi^{(4)}(\xi)\grad\xi (\varphi_u-\varphi_\tu) + \varphi_\tu'[h]\psi'''(\xi)\grad (\varphi_u-\varphi_\tu)}_{L^2(\L^2)}\\
	\nonumber &\leq C\norm{ \varphi_\tu'[h]}_{L^{\infty}(H^2)}\norm{\varphi_u-\varphi_\tu}_{L^{\infty}(H^2)}\\
	\label{lip_est_fre_7}&\leq C\norm{h}_{L^2(L^2)}\norm{u-\tu}_{L^2(L^2)}.
	\end{align}
	With similar arguments, it follows that
	\begin{equation}
	\label{lip_est_fre_8}\norm{\laplace F_3}_{L^2(L^2)}\leq C\norm{h}_{L^2(L^2)}\norm{u-\tu}_{L^2(L^2)}.
	\end{equation}
	Hence, from \eqref{lip_est_fre_1}, \eqref{lip_est_fre_6}-\eqref{lip_est_fre_8} and elliptic regularity we obtain
	\begin{equation}
	\label{lip_est_fre_9}\norm{F_3}_{L^{\infty}(L^2)\cap L^2(H^2)}\leq C\norm{h}_{L^2(L^2)}\norm{u-\tu}_{L^2(L^2)}.
	\end{equation}
	Finally, using \eqref{lip_est_fre_1}-\eqref{lip_est_fre_5} and \eqref{lip_est_fre_9}, an application of Proposition \ref{PROP:LIN} yields
	\begin{equation*}
	\norm{(\varphi,\mu,\sigma,\v,p)}_{\V_1}\leq C\norm{h}_{L^2(L^2)}\norm{u-\tu}_{L^2(L^2)},
	\end{equation*}
	hence \eqref{LIP:EST:FRE:DER} holds. This completes the proof.
\end{proof}

\begin{remark}
	Since the Fréchet derivative $\S'(u)$ maps again into the space $\V_1$ and is also continuous with respect to the operator norm on $\mathcal L(L^2(L^2);\V_1)$, we conjecture that the procedure of Proposition \ref{LEM:FRE} can be repeated arbitrarily often provided that $\psi$, $\h$, $\varphi_0$, $\sigma_B$ and $\Gamma$ are smooth. Then, it were possible to show that the control-to-state operator is actually smooth. \\[1ex]
	Assuming that the control-to-state operator were at least twice continuously Fréchet differentiable, we could use this property in Section 5.3 to derive an alternative second-order sufficient condition for local optimality. However, we decided to use a different approach which is based on Fréchet differentiability of the control-to-costate operator (see Section 4) as we preferred the resulting optimality condition.
\end{remark}

\section{The adjoint state and its properties}
In optimal control theory, it is a standard approach to use \textbf{adjoint variables} to express the optimality conditions suitably. They are given by the \textbf{adjoint system} which can be derived by formal Lagrangian technique. It consists of the following equations:
\begin{subequations}
\label{EQ:ADJ}
	\begin{empheq}[left=\textnormal{(ADJ)}\hspace{-3pt}\empheqlbrace]{align}
	\label{EQ:ADJ1}
	\divergence(\w) &= 0 &&\text{in}\;\OmegaT, \\
	\label{EQ:ADJ2}
	-\eta\laplace\w + \nu\w &=  -\grad q +\varphi_u\grad\phi  &&\text{in}\;\OmegaT,\\
	\label{EQ:ADJ3}
	\delt\phi+\v_u\cdot\grad\phi &= -(P\sigma_u-A-u)\h'(\varphi_u)\phi - \h'(\varphi_u)\sigma_u \rho - \psi''(\varphi_u)\tau + \laplace\tau\notag\\
	& + \grad(\mu_u+\chi\sigma_u)\cdot\w+(P\sigma_u-A)\h'(\varphi_u)q - \upalpha_1(\varphi_u-\varphi_d) &&\text{in}\;\OmegaT,\\
	\label{EQ:ADJ4}
	\tau &= \grad\varphi_u\cdot\w + m\laplace\phi  &&\text{in}\;\OmegaT,\\
	\label{EQ:ADJ5}
	\laplace\rho - \h(\varphi_u)\rho - b \rho&= - \chi\tau + P\h(\varphi_u)\phi +\chi\grad\varphi_u\scdot\w - P\h(\varphi_u)q 
	&&\text{in}\;\OmegaT,\\[2mm]
	\label{EQ:ADJ6}
	\deln\phi &=\deln\rho=0 &&\text{on}\;\GammaT,\\
	\label{EQ:ADJ7}
	0 &= (2\eta \D\w - q\I + \varphi_u\phi\I)\n &&\text{on}\;\GammaT,\\
	\label{EQ:ADJ8}
	\deln\tau &= \phi\v_u\cdot\n - (\mu_u+\chi\sigma_u)\w\cdot\n &&\text{on}\;\GammaT,\\[2mm]
	\label{EQ:ADJ10}
	\phi(T)&=\upalpha_0(\varphi_u(T)-\varphi_f) &&\text{in}\; \Omega.
	\end{empheq}
\end{subequations}

\subsection{Existence and uniqueness of weak solutions}

A weak solution of this system, which is referred to as an \textbf{adjoint state} or \textbf{costate}, is defined as follows:
\begin{definition}
	\label{DEF:WSADJ}
	Let $u\in\UR$ be any control and let $(\varphi_u,\mu_u,\sigma_u,\v_u,p_u)$ denote its corresponding state. Then $(\phi,\tau,\rho,\w,q)$ is called a weak solution of the adjoint system \eqref{EQ:ADJ} if:
	\begin{enumerate}
		\itemi The functions $\phi,\tau,\rho,\w$ and $q$ have the following regularity:
		\begin{gather*}
		\phi \in H^1\big((H^1)^\ast\big) \cap L^\infty(H^1)\cap L^2(H^3), \;\; \tau \in L^2(H^1),\;\;
		\rho\in L^2(H^1),  \;\;\w\in L^2\big(\H^1\big), \;\; q\in L^2(L^2).
		\end{gather*}
		\itemii The quintuplet $(\phi,\tau,\rho,\w,q)$ satisfies the equations
		\begin{alignat}{2}
		\label{WF:ADJ0} 
		\phi(T) &= \upalpha_0(\varphi_u(T)-\varphi_f) &&\quad\text{a.e. in}\; \Omega,\\
		\label{WF:ADJ1} 
		\divergence(\w) &= 0 &&\quad\text{a.e. in}\; \OmegaT 
		\end{alignat}
		and
		\begin{align}
		\label{WF:ADJ2} 
		&\intO 2\eta\D\w\scolon\grad\tw + \nu\w\scdot\tw - q\divergence(\tw) \dx = \sminus\intO \phi\grad\varphi_u\scdot\tw + \phi\varphi_u \divergence(\tw) \dx , \\
		\nonumber &\big\langle \delt\phi, \tphi \big\rangle_{H^1} = -\intO  \big[ (P\sigma_u-A-u)\h'(\varphi_u)\phi  + \h'(\varphi_u)\sigma_u\rho + \psi''(\varphi_u)\tau \notag\\[-2mm]
		\nonumber& \hspace{120pt} - (P\sigma_u-A)\h'(\varphi_u) q - \phi\divergence(\v_u) +\upalpha_1(\varphi_u-\varphi_d)\big] \tphi \dx \notag\\
		\label{WF:ADJ3} & \hspace{90pt}+ \intO \big[\phi\v_u -(\mu_u+\chi\sigma_u)\w -\grad\tau\, \big]\cdot\grad\tphi\dx,\\
		\label{WF:ADJ4} 
		&\intO \tau\ttau \dx = \intO  \grad\varphi_u\cdot\w \ttau - m\grad\phi\cdot\grad\ttau \dx, \\
		\nonumber
		&-\intO \grad\rho\cdot\grad\trho \dx - b \intO \rho\trho \dS = \intO \big[ -\chi\tau+P\h(\varphi_u)\phi + \chi\grad\varphi_u\cdot\w \\[-2mm]
		\label{WF:ADJ5} & \hspace{180pt} - P\h(\varphi_u)q + \h(\varphi_u)\rho \big]\trho\dx
		\end{align}
		for a.e. $t\in (0,T)$ and all $\tphi,\ttau,\trho \in H^1,~\tw\in \H^1$.
	\end{enumerate}
\end{definition}

\bigskip

To prove existence and uniqueness of solutions for \eqref{EQ:ADJ}, we will use the following Lemma:

\begin{lemma}
	\label{LEM:ADJ}
	Let $u\in\UR$ be any control and let $(\varphi_u,\mu_u,\sigma_u,\v_u,p_u)$ denote its corresponding state. Furthermore, let $(G_0,G_1,G_2,G_3,\G_1,\G_2)\in \V_4$ be arbitrary. Then, there exists a unique solution $(\phi,\tau,\rho,\w,q)\in \V_3$ solving
	\begin{subequations}
		\label{EQ:ADJG}
		\begin{align}
		\label{EQ:ADJ2:1}
		\divergence(\w) &= 0 &&\text{a.e. in}\;\OmegaT, \\
		\label{EQ:ADJ2:2}
		-\eta\laplace\w + \nu\w &=  -\grad q +\varphi_u\grad\phi +  \G_1 &&\text{a.e. in}\;\OmegaT,\\
		\label{EQ:ADJ2:4}
		\tau &= \grad\varphi_u\cdot\w + m\laplace\phi + G_2 &&\text{a.e. in}\;\OmegaT,\\
		\label{EQ:ADJ2:5}
		\laplace\rho - b\rho - \h(\varphi_u)\rho &= - \chi\tau + P\h(\varphi_u)\phi +\chi\grad\varphi_u\scdot\w - P\h(\varphi_u)q + G_3
		&&\text{a.e. in}\;\OmegaT,\\[2mm]
		\label{EQ:ADJ2:6}
		\deln\phi &=\deln\rho=0 &&\text{a.e. on}\;\GammaT,\\
		\label{EQ:ADJ2:7}
		0 &= (2\eta \D\w - q\I + \varphi_u\phi\I)\n &&\text{a.e. on}\;\GammaT,\\
		\label{EQ:ADJ2:8}
		\phi(T)&=G_0 &&\text{a.e. in}\; \Omega,
		\end{align}
		and 
		\begin{align}
		\label{EQ:ADJ2:3} 
		&\big\langle \delt\phi, \tphi \big\rangle_{H^1} = -\intO  \big[ (P\sigma_u-A-u)\h'(\varphi_u)\phi  + \h'(\varphi_u)\sigma_u\rho + \psi''(\varphi_u)\tau \notag\\[-2mm]
		& \hspace{120pt} - (P\sigma_u-A)\h'(\varphi_u) q - \phi\divergence(\v_u) + G_1\big] \tphi \dx \notag\\
		& \hspace{90pt}+ \intO \big[\phi\v_u -(\mu_u+\chi\sigma_u)\w -\grad\tau+\G_2\, \big]\cdot\grad\tphi\dx,
		\end{align}
		for a.e. $t\in (0,T)$ and all $\tphi \in H^1$.
	\end{subequations}	
	In addition, it holds that
	\begin{align}
	\label{LEM:ADJ:EST}\norm{(\phi,\tau,\rho,\w,q)}_{\V_3} \leq C\norm{(G_0,G_1,G_2,G_3,\G_1,\G_2)}_{\V_4}.
	\end{align}
\end{lemma}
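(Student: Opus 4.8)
I would follow the template of the proof of Proposition~\ref{PROP:LIN}. First, a time reversal $s=T-t$ turns the terminal condition \eqref{EQ:ADJ2:8} into an initial condition $\phi(0)=G_0$ and makes the principal part $\delt\phi+\laplace\tau=\cdots$, together with $\tau=m\laplace\phi+\cdots$, forward-parabolic of Cahn--Hilliard type; the equations \eqref{EQ:ADJ2:1}--\eqref{EQ:ADJ2:2},\eqref{EQ:ADJ2:7} for $(\w,q)$ and \eqref{EQ:ADJ2:5} for $\rho$ carry no time derivative, so at each instant $\w$, $q$ and $\rho$ are slaved to $\phi$ and to the data by elliptic solves. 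For the Brinkman-type system this gives a unique $(\w,q)$ with $\norm{\w}_{\H^1}+\norm{q}_{L^2}\le C(\norm{\phi}_{H^1}+\norm{\G_1}_{(\H^1)^\ast})$ by the Lax--Milgram theorem applied to the weak form of \eqref{EQ:ADJ2:1}--\eqref{EQ:ADJ2:2},\eqref{EQ:ADJ2:7} (the traction condition, whose datum is built from $\varphi_u\phi$, fixes the pressure; cf.\ \cite[Lemma 1.5]{EbenbeckGarcke}), and elliptic regularity upgrades this as needed; for \eqref{EQ:ADJ2:5} the operator $-\laplace+b+\h(\varphi_u)$ is uniformly coercive on $H^1$ (recall $b>0$ and $\h\ge 0$), so $\rho$ is uniquely determined with $\norm{\rho}_{H^1}\le C(\norm{\tau}_{L^2}+\norm{\phi}_{L^2}+\norm{\w}_{\L^2}+\norm{q}_{L^2}+\norm{G_3}_{L^2})$ and enjoys $H^2$-regularity.

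For existence I would set up a Galerkin scheme in the Cahn--Hilliard-type variables: let $\{w_j\}_{j\in\N}$ be the eigenfunctions of $-\laplace_N$ (smooth since $\Gamma\in C^4$, orthonormal in $L^2$ and orthogonal in $H^1$ and $H^2_\n$), make the ansatz $\phi^n=\sum_{j\le n}a_j^n w_j$ and $\tau^n=\sum_{j\le n}c_j^n w_j$, solve successively for $(\w^n,q^n)$, then $\tau^n$ via the projection of \eqref{EQ:ADJ2:4}, then $\rho^n$ from \eqref{EQ:ADJ2:5} in the full spaces, and feed everything into the projection of \eqref{EQ:ADJ2:3} onto $\operatorname{span}\{w_1,\dots,w_n\}$. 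This produces a linear system of ODEs for $(a^n,c^n)$ whose coefficients are integrable in time (thanks to $\v_u\in L^8(\H^2)$, $u\in L^2(L^2)$, etc.), hence globally solvable on $[0,T]$ by Carath\'eodory's theorem; all the estimates below can be carried out rigorously within this scheme.

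The heart of the proof — and the step I expect to be the main obstacle — is the derivation of a~priori bounds uniform in $n$, proceeding in two stages as in Proposition~\ref{PROP:LIN}. In the basic estimate one tests \eqref{EQ:ADJ2:5} with $\rho^n$ and the $\phi^n$-equation with $\phi^n$; the fourth-order structure yields, after integration by parts using $\tau^n=m\laplace\phi^n+\cdots$ and $\deln\phi^n=0$, the dissipative term $m\norml{2}{\laplace\phi^n}^2$, and combining this with the Brinkman bound and absorbing all cross terms via Young's inequality, a Gronwall argument gives $\phi^n\in L^\infty(L^2)\cap L^2(H^2)$ together with companion bounds for $\tau^n,\rho^n,\w^n,q^n$. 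In the higher-order estimate one tests the $\phi^n$-equation with $-\laplace\phi^n$, producing $\tfrac12\ddt\norml{2}{\grad\phi^n}^2+m\normL{2}{\grad\laplace\phi^n}^2$ on the left-hand side; the right-hand side (notably the contributions of $\grad(\grad\varphi_u\scdot\w^n)$, $\grad G_2$, and the $\rho^n$- and $q^n$-terms) is controlled using the state regularity from Proposition~\ref{THM:EXS} — in particular $\varphi_u\in L^2(H^4)\cap C(\overline{\OmegaT})$, which yields $\grad\varphi_u,D^2\varphi_u\in L^2(L^\infty)$ — together with the embeddings $H^1\subset L^6$, $\H^1\subset\L^6$, $\H^2\subset\L^\infty$, the Gagliardo--Nirenberg inequality and elliptic regularity, and a second Gronwall argument gives $\phi^n\in L^\infty(H^1)\cap L^2(H^3)$ and hence $\tau^n\in L^2(H^1)$ by \eqref{EQ:ADJ2:4}. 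A comparison argument in \eqref{EQ:ADJ2:3} then bounds $\delt\phi^n$ in $L^2((H^1)^\ast)$, using $\divergence\w^n=0$ to shift a derivative off the $(\mu_u+\chi\sigma_u)\w^n$ term. The delicate point is to arrange that every product of a state quantity with an adjoint quantity lands in a space that makes Young's inequality close the estimates; the function spaces $\V_3$ and $\V_4$ are tailored precisely for this.

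Finally, the $n$-uniform bounds furnish weakly/weakly-$\ast$ convergent subsequences, and Aubin--Lions--Simon compactness (e.g.\ \cite[Sect.\,8, Cor.\,4]{Simon}) allows passing to the limit in all the products with the fixed state; the limit $(\phi,\tau,\rho,\w,q)\in\V_3$ then solves \eqref{EQ:ADJG}, the initial condition $\phi(0)=G_0$ being attained thanks to the embedding $H^1((H^1)^\ast)\cap L^\infty(H^1)\hookrightarrow C([0,T];L^2)$. The estimate \eqref{LEM:ADJ:EST} follows from weak lower semicontinuity of the $\V_3$-norm, and uniqueness is immediate: since the whole system is linear in $(\phi,\tau,\rho,\w,q)$ (all coefficients being the fixed state from Proposition~\ref{THM:EXS}), the difference of two solutions solves \eqref{EQ:ADJG} with vanishing data, so \eqref{LEM:ADJ:EST} forces it to be zero.
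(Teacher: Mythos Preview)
Your proposal is correct and follows essentially the same route as the paper, which refers the argument back to \cite[Thm.\,16]{EbenbeckKnopf} (Galerkin scheme in the Cahn--Hilliard variables, elliptic solves for $(\w,q)$ and $\rho$ at each time, a~priori bounds plus Gronwall, compactness, uniqueness from linearity and \eqref{LEM:ADJ:EST}). The only visible difference is that the paper obtains the $L^\infty(H^1)\cap L^2(H^3)$ estimate in a single step by testing \eqref{EQ:ADJ2:3} with $\tphi=\laplace\phi-\phi$, whereas you split this into a basic $\phi$-test and a higher-order $-\laplace\phi$-test; the combined test function is slightly more economical but the two procedures are equivalent.
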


\begin{proof}[Proof of Lemma \ref{LEM:ADJ}] 
	The proof is a straightforward modification of the proof of \cite[Thm. 16]{EbenbeckKnopf}. We will only sketch the main differences. Testing \eqref{EQ:ADJ2:3} with $\tphi = \laplace\phi-\phi$, we have to estimate the terms
	\begin{equation*}
	-\intO G_1(\laplace\phi-\phi)\dx,\qquad \intO \G_2\cdot\grad(\laplace\phi-\phi)\dx.
	\end{equation*}
	Using the continuous embedding $H^1\hookrightarrow L^6$ together with Young's and Hölder's inequalities, these two terms can be controlled via
	\begin{equation*}
	\left|\intO -G_1(\laplace\phi-\phi) + \G_2\cdot\grad(\laplace\phi-\phi)\dx \right| \leq C\left(\normh{1}{\phi}^2 + \norml{\frac{6}{5}}{G_1}^2+\normL{2}{\G_2}^2\right) + \frac{m}{16}\left(\norml{2}{\laplace\phi}^2 + \normL{2}{\grad\laplace\phi}^2\right).
	\end{equation*}
	The last two terms on the right-hand side of this inequality can be absorbed into the left-hand side of an energy inequality whereas the first term on the right-hand side can be controlled through a Gronwall term. The terms involving $G_1$ and $\G_2$ enter the inequality \eqref{LEM:ADJ:EST}. Apart from these arguments, the remaining estimates can be carried out with straightforward modifications of the proof of \cite[Thm. 16]{EbenbeckKnopf}
\end{proof}

\smallskip

\begin{corollary}\label{COR:ADJ}
	Let $u\in\UR$ be any control and let $(\varphi_u,\mu_u,\sigma_u,\v_u,p_u)$ denote its corresponding state. Then, there exists a unique weak solution $(\phi_u,\tau_u,\rho_u,\w_u,q_u)\in \V_3$ of \eqref{EQ:ADJ} in the sense of Definition \ref{DEF:WSADJ}. 
\end{corollary}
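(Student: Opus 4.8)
The plan is to obtain Corollary \ref{COR:ADJ} directly from Lemma \ref{LEM:ADJ}: the adjoint system (ADJ) is precisely the auxiliary system \eqref{EQ:ADJG} for one particular choice of data. Comparing (ADJ) with \eqref{EQ:ADJG} term by term --- paying attention to the sign conventions and to the fact that the $\upalpha_1(\varphi_u-\varphi_d)$-term plays the role of $G_1$ --- one reads off that (ADJ) coincides with \eqref{EQ:ADJG} for
\begin{equation*}
  G_0 = \upalpha_0\big(\varphi_u(T)-\varphi_f\big), \qquad G_1 = \upalpha_1(\varphi_u-\varphi_d), \qquad G_2 = G_3 = 0, \qquad \G_1 = \G_2 = \mathbf{0}.
\end{equation*}
Moreover, the two notions of solution agree for this data: the regularity demanded in Definition \ref{DEF:WSADJ}(i) is implied by membership in $\V_3$, the identities \eqref{WF:ADJ0}--\eqref{WF:ADJ5} follow from \eqref{EQ:ADJG} by testing with $\tw\in\H^1$ resp.\ $\tphi,\ttau,\trho\in H^1$ and integrating by parts (using the boundary conditions \eqref{EQ:ADJ2:6}--\eqref{EQ:ADJ2:7} and the natural boundary condition for $\tau$ built into \eqref{EQ:ADJ2:3}), and conversely elliptic regularity for the Brinkman operator (\cite[Lemma 1.5]{EbenbeckGarcke}) and for the Neumann--Laplacian upgrades any weak solution in the sense of Definition \ref{DEF:WSADJ} to a $\V_3$-solution of \eqref{EQ:ADJG}.

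It then remains to check that $(G_0,G_1,G_2,G_3,\G_1,\G_2)\in\V_4 = H^1\times L^2(L^{6/5})\times L^2(H^1)\times L^2(L^2)\times L^2(\H^1)\times L^2(\L^2)$. The four vanishing entries are trivial. For $G_1$, Proposition \ref{THM:EXS} gives $\varphi_u\in L^\infty(H^2)\hookrightarrow L^2(L^2)$, and since $\Omega$ is bounded, $L^2(L^2)\hookrightarrow L^2(L^{6/5})$, so $G_1\in L^2(L^{6/5})$ (using $\varphi_d\in L^2(L^2)$). For $G_0$, Proposition \ref{THM:EXS} together with the interpolation argument in the corollary following it yields $\varphi_u\in C([0,T];H^2)$, hence $\varphi_u(T)\in H^2\hookrightarrow H^1$ and therefore $G_0\in H^1$ (here one uses $\varphi_f\in H^1$). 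Thus Lemma \ref{LEM:ADJ} applies and produces a unique $(\phi_u,\tau_u,\rho_u,\w_u,q_u)\in\V_3$ solving \eqref{EQ:ADJG}, that is, solving (ADJ). The bound \eqref{LEM:ADJ:EST} specializes to $\norm{(\phi_u,\tau_u,\rho_u,\w_u,q_u)}_{\V_3}\le C\big(\norm{\varphi_u(T)-\varphi_f}_{H^1}+\norm{\varphi_u-\varphi_d}_{L^2(L^{6/5})}\big)$, which by Proposition \ref{THM:EXS} is controlled in terms of the system data, $R$, $\Omega$, $\Gamma$, $T$ and the targets.

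Uniqueness of the weak solution in the sense of Definition \ref{DEF:WSADJ} follows from the equivalence of solution concepts recorded above combined with the uniqueness assertion of Lemma \ref{LEM:ADJ}. There is no genuine difficulty here: the statement is a specialization of Lemma \ref{LEM:ADJ}, and the only points that require a little care are the correct matching of the inhomogeneities (signs; $\G_2$ vanishing while $G_1=\upalpha_1(\varphi_u-\varphi_d)$) and the verification that the target-dependent terminal datum $G_0$ lies in $H^1$, which rests on the continuity-in-time regularity $\varphi_u\in C([0,T];H^2)$ of the state.
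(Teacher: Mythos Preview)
Your proposal is correct and follows essentially the same approach as the paper: you specialize Lemma \ref{LEM:ADJ} with the choices $G_0=\upalpha_0(\varphi_u(T)-\varphi_f)$, $G_1=\upalpha_1(\varphi_u-\varphi_d)$, $G_2=G_3=0$, $\G_1=\G_2=\mathbf{0}$, verify membership in $\V_4$ via $\varphi_u\in C([0,T];H^2)$, and note that the solution concepts match. The paper's proof is identical in substance but terser, simply asserting that the weak formulation \eqref{WF:ADJ0}--\eqref{WF:ADJ5} follows from \eqref{EQ:ADJ2:1}--\eqref{EQ:ADJ2:8} without spelling out the equivalence in both directions as you do.
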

\begin{proof}
	This follows from an application of Lemma \ref{LEM:ADJ} with the following choices:
	\begin{gather*}
	\G_1= \mathbf{0},\qquad \G_2 = \mathbf{0},\qquad G_0=\upalpha_0(\varphi_u(T)-\varphi_f),\qquad
	G_1=\upalpha_1(\varphi_u\sminus \varphi_d),\qquad 
	G_2=0,
	\qquad G_3=0.
	\end{gather*}
	Since $\varphi_u\in C(H^2)$, it follows that $\varphi_u(T)\in H^1$ with bounded norm. Hence, it is easy to check that $(G_0,G_1,G_2,G_3,\G_1,\G_2)\in \V_4$ with bounded norm. Moreover, using \eqref{EQ:ADJ2:1}-\eqref{EQ:ADJ2:8}, it is straightforward to check that \eqref{WF:ADJ0}-\eqref{WF:ADJ2} and \eqref{WF:ADJ4}-\eqref{WF:ADJ5} are fulfilled. This completes the proof.
\end{proof}

\bigskip

Similar to the definition of the control-to-state operator, we can define an operator that maps any control $\u\in\UR$ onto its corresponding adjoint state:

\begin{definition}
	We define the \textbf{control-to-costate operator} $\A\colon \UR\to \V_3$ as the operator assigning to every $\u\in\UR$ the unique weak solution $(\phi_u,\tau_u,\rho_u,\w_u,q_u)\in \V_3$ of the adjoint system \eqref{EQ:ADJ}.
\end{definition}

\subsection{Lipschitz continuity}

In the following, we show that the control-to-costate operator is Lipschitz-continuous:
\begin{proposition}\label{PROP:ADJ:LIP}
	There exists some constant $L_3>0$ depending only on $R$, $\Omega$ and $T$ such that for all $u,\tu\in\UR$,
	\begin{equation}
	\label{ADJ:LIP:EST}
	\norm{\A(\tu)-\A(u)}_{\V_3}\leq L_3\norm{\tu-u}_{L^2(L^2)}.
	\end{equation}
\end{proposition}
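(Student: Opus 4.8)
The plan is to represent the difference of the two costates as the unique solution of the auxiliary system \eqref{EQ:ADJG} from Lemma \ref{LEM:ADJ}, with coefficients frozen at one control and with inhomogeneities that collect all the coefficient differences, and then to bound those inhomogeneities in $\V_4$ by $\norm{u-\tu}_{L^2(L^2)}$ so that the estimate \eqref{LEM:ADJ:EST} immediately yields \eqref{ADJ:LIP:EST}.

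Concretely, I would fix $u,\tu\in\UR$, write $(\varphi,\mu,\sigma,\v,p):=\S(\tu)-\S(u)$ and $(\phi,\tau,\rho,\w,q):=\A(\tu)-\A(u)$, and subtract the copy of \textnormal{(ADJ)} at $u$ from the one at $\tu$. For each bilinear coupling term I would split off the part evaluated at the $u$-state according to the pattern
\[
  \psi''(\varphi_\tu)\tau_\tu-\psi''(\varphi_u)\tau_u
    =\psi''(\varphi_u)\tau+\big(\psi''(\varphi_\tu)-\psi''(\varphi_u)\big)\tau_\tu ,
\]
and analogously for all other couplings (those built from $\h$, $\h'$, $\psi'''$, using the mean value theorem for these composite functions, as well as $\varphi\grad\phi$, $\grad\varphi\scdot\w$, $\phi\v$, $(\mu+\chi\sigma)\w$, $\phi\divergence\v$, $\h(\varphi)\rho$). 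This exhibits $(\phi,\tau,\rho,\w,q)$ as the solution of \eqref{EQ:ADJG} with coefficients taken at $(\varphi_u,\mu_u,\sigma_u,\v_u,p_u)$ and with data of the form $G_0=\upalpha_0\big(\varphi_\tu(T)-\varphi_u(T)\big)$, $G_1=\upalpha_1\varphi+(\ast)$, and $G_2,G_3,\G_1,\G_2$ each a finite sum of products of a coefficient difference (one of $\varphi_\tu-\varphi_u$, $\mu_\tu-\mu_u$, $\sigma_\tu-\sigma_u$, $\v_\tu-\v_u$, $\divergence(\v_\tu-\v_u)$, $\psi^{(k)}(\varphi_\tu)-\psi^{(k)}(\varphi_u)$ or $\h^{(k)}(\varphi_\tu)-\h^{(k)}(\varphi_u)$) with one component of the $\tu$-costate $(\phi_\tu,\tau_\tu,\rho_\tu,\w_\tu,q_\tu)$. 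The one step here that is not purely mechanical is the boundary bookkeeping: the difference of the no-friction conditions \eqref{EQ:ADJ7} carries an extra term $\phi_\tu(\varphi_\tu-\varphi_u)\I\n$, which I would remove by the shift $q\mapsto q-(\varphi_\tu-\varphi_u)\phi_\tu$; this restores the homogeneous form \eqref{EQ:ADJ2:7} and instead contributes $\grad\big((\varphi_\tu-\varphi_u)\phi_\tu\big)$ to $\G_1$, while the difference of the flux condition \eqref{EQ:ADJ8} is already of flux type and goes directly into $\G_2$.

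The core of the argument is then to verify $(G_0,G_1,G_2,G_3,\G_1,\G_2)\in\V_4$ with $\V_4$-norm $\le C\norm{u-\tu}_{L^2(L^2)}$. For this I would use (i) the Lipschitz estimate \eqref{LEM:LIP:EST} of Corollary \ref{COR:LIP} — equivalently \eqref{EQ:LIP15}--\eqref{EQ:LIP16} from the proof of Theorem \ref{THM:UNS} — which bounds $(\varphi,\mu,\sigma,\v,p)$ in $\V_1$ (in particular $\varphi\in L^\infty(H^2)\cap L^2(H^4)$, $\divergence(\v)\in L^8(H^1)$, $\mu\in L^\infty(L^2)\cap L^2(H^2)$) by $C\norm{u-\tu}_{L^2(L^2)}$; (ii) the uniform state bounds \eqref{THM:EXS:EST}; and (iii) the uniform bounds on $(\phi_\tu,\tau_\tu,\rho_\tu,\w_\tu,q_\tu)$ in $\V_3$ obtained by applying \eqref{LEM:ADJ:EST} exactly as in Corollary \ref{COR:ADJ}, so that in particular $\w_\tu\in L^2(\H^2)\subset L^2(\L^\infty)$, $\phi_\tu\in L^\infty(H^1)$, $\tau_\tu\in L^2(H^1)$, $\rho_\tu\in L^2(H^2)$, $q_\tu\in L^2(H^1)$; together with (A3)--(A4), the mean value theorem, Hölder's inequality and the embeddings $H^1\hookrightarrow L^6$, $H^2\hookrightarrow L^\infty$ valid for $d\le 3$. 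Typical checks: $\big(\psi''(\varphi_\tu)-\psi''(\varphi_u)\big)\tau_\tu$ satisfies $\norm{\cdot}_{L^2(L^2)}\le\norm{\varphi_\tu-\varphi_u}_{L^\infty(L^\infty)}\norm{\tau_\tu}_{L^2(L^2)}\le C\norm{u-\tu}_{L^2(L^2)}$, which lies in $L^2(L^2)\hookrightarrow L^2(L^{6/5})$; the contribution $\grad(\varphi)\scdot\w_\tu$ to $G_2$ is bounded in $L^2(H^1)$ through $\norm{\grad^2\varphi(t)\,\w_\tu(t)}_{L^2}+\norm{\grad\varphi(t)\,\grad\w_\tu(t)}_{L^2}\le C\norm{\varphi(t)}_{H^2}\norm{\w_\tu(t)}_{H^2}$ followed by integration in $t$ using $\varphi\in L^\infty(H^2)$ and $\w_\tu\in L^2(\H^2)$; and $G_0\in H^1$ with the required bound follows from $L^2(H^4)\cap H^1(L^2)\hookrightarrow C([0,T];H^2)$. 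Once all terms are handled this way, \eqref{LEM:ADJ:EST} gives $\norm{(\phi,\tau,\rho,\w,q)}_{\V_3}\le C\norm{u-\tu}_{L^2(L^2)}$ (and $q$ itself is recovered from the shifted pressure since $(\varphi_\tu-\varphi_u)\phi_\tu\in L^\infty(H^1)$ with norm $\le C\norm{u-\tu}_{L^2(L^2)}$), which is exactly \eqref{ADJ:LIP:EST} with $L_3=C$.

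I expect no new PDE estimate to be needed — all the analysis is already encapsulated in Lemma \ref{LEM:ADJ}. The main obstacle is the organizational one: correctly decomposing every bilinear coupling term and, above all, routing each coefficient-difference $\times$ costate-component product into the right slot of $\V_4$, whose slots require different integrability ($H^1$ for $G_0$, only $L^2(L^{6/5})$ for $G_1$, $L^2(H^1)$ for $G_2$, $L^2(L^2)$ for $G_3$, $L^2(\H^1)$ and $L^2(\L^2)$ for $\G_1,\G_2$); this is also precisely what forces the boundary contributions of \eqref{EQ:ADJ7}--\eqref{EQ:ADJ8} to be re-expressed as bulk/flux sources rather than left on the boundary.
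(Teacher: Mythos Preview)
Your proposal is correct and follows essentially the same approach as the paper: you write the costate difference as the solution of \eqref{EQ:ADJG} with coefficients frozen at $u$, absorb the boundary mismatch in \eqref{EQ:ADJ7} via the pressure shift $\pi=q-\phi_\tu(\varphi_\tu-\varphi_u)$, bound the resulting data in $\V_4$ using Proposition~\ref{THM:EXS}, Corollary~\ref{COR:LIP} and Corollary~\ref{COR:ADJ}, and conclude by \eqref{LEM:ADJ:EST}. The only minor point you leave implicit is that the pressure shift also feeds new terms into $G_1$ and $G_3$ (through the $q$-couplings in \eqref{EQ:ADJ2:3} and \eqref{EQ:ADJ2:5}), but these are of the same harmless product type you already handle.
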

\begin{proof}
	We first define
	\begin{equation*}
	(\phi_,\tau,\rho,\w,q)\coloneqq \S(\tu) - \S(u) = (\phi_\tu,\tau_\tu,\rho_\tu,\w_\tu,q_\tu)-(\phi_u,\tau_u,\rho_u,\w_u,q_u)
	\end{equation*}
	and introduce the variable
	\begin{equation*}
	\pi \coloneqq q-\phi_\tu (\varphi_\tu-\varphi_u).
	\end{equation*}
	Then, the quintuplet $(\phi,\tau,\rho,\w,\pi)$ fulfills \eqref{EQ:ADJG} with
	\begin{align*}
	\G_1&= -\phi_\tu\grad (\varphi_\tu-\varphi_u),\\
	\G_2&= \phi_\tu(\v_\tu-\v_u) - \w_\tu\big((\mu_\tu+\chi\sigma_\tu)-(\mu_u+\chi\sigma_u)\big),\\
	G_0 &= \upalpha_0(\varphi_\tu(T)-\varphi_u(T)),\displaybreak\\
	G_1 &= \phi_\tu [\big( P(\sigma_\tu-\sigma_u)-(\tu-u)\big)\h'(\varphi_u)] + \phi_\tu (P\sigma_\tu-A-\tu)\big(\h'(\varphi_\tu)-\h'(\varphi_u)\big)\\
	&\quad + \rho_\tu \big(\h'(\varphi_u)(\sigma_\tu-\sigma_u) + \sigma_\tu(\h'(\varphi_\tu)-\h'(\varphi_u))\big)+(\psi''(\varphi_\tu)-\psi''(\varphi_u))\tau_\tu\\
	&\quad - q_\tu[P(\sigma_\tu-\sigma_u)\h'(\varphi_u) + (P\sigma_\tu-A)(\h'(\varphi_\tu)-\h'(\varphi_u))] - (P\sigma_u-A)\h'(\varphi_u)\phi_\tu(\varphi_\tu-\varphi_u) \\
	&\quad -\phi_\tu\divergence(\v_\tu-\v_u) 
	+ \upalpha_1(\varphi_\tu-\varphi_u), \\
	G_2 &= \grad(\varphi_\tu-\varphi_u)\cdot\w_\tu,\\
	G_3 &= \rho_\tu(\h(\varphi_\tu)-\h(\varphi_u)) + P\phi_\tu(\h(\varphi_\tu)-\h(\varphi_u)) + \chi\grad(\varphi_\tu-\varphi_u)\cdot\w_\tu,\\
	&\quad  - P\h(\varphi_u)\phi_\tu(\varphi_\tu-\varphi_u) - Pq_\tu(\h(\varphi_\tu)-\h(\varphi_u)). 
	\end{align*}
	Using \eqref{THM:EXS:EST}, \eqref{LEM:LIP:EST} and the mean value theorem, it is easy to check that
	\begin{equation*}
	\norm{\psi''(\varphi_\tu)-\psi''(\varphi_u)}_{L^{\infty}(L^{\infty})}\leq C\norm{\varphi_\tu-\varphi_u}_{L^{\infty}(L^{\infty})},\quad \norm{\h'(\varphi_\tu)-\h'(\varphi_u)}_{L^{\infty}(L^{\infty})}\leq C\norm{\varphi_\tu-\varphi_u}_{L^{\infty}(L^{\infty})}.
	\end{equation*}
	Then, using Proposition \ref{THM:EXS}, Corollary \ref{COR:LIP} and Corollary \ref{COR:ADJ}, a straightforward calculation shows that 
	\begin{equation*}
	\norm{(G_0,G_1,G_2,G_3,\G_1,\G_2)}_{\V_4}\leq C\norm{\tu-u}_{L^2(L^2)}.
	\end{equation*}
	Consequently, the estimate \eqref{LEM:ADJ:EST} implies that
	\begin{equation}
	\norm{(\phi,\tau,\rho,\w,\pi)}_{\V_3}\leq C\norm{(G_0,G_1,G_2,G_3,\G_1,\G_2)}_{\V_4}\leq C\norm{\tu-u}_{L^2(L^2)}.
	\end{equation}
	Recalling the definitions of $\pi$ and $\V_3$, it remains to show that
	\begin{equation*}
	\norm{\phi_\tu(\varphi_\tu-\varphi_u)}_{L^2(H^1)}\leq C\norm{\tu-u}_{L^2(L^2)}.
	\end{equation*}
	However, this is another easy consequence of Corollary \ref{COR:LIP} and Corollary \ref{COR:ADJ}. Therefore, it follows that
	\begin{equation}
	\norm{(\phi,\tau,\rho,\w,q)}_{\V_3}\leq C\norm{\tu-u}_{L^2(L^2)},
	\end{equation}
	which completes the proof.
\end{proof}

\subsection{Fréchet differentiability}

We can also show that the control-to-costate operator is continuously Fréchet differentiable:

\pagebreak[2]

\begin{proposition}
	\label{LEM:FREADJ} 
	The following statements hold:
	\begin{enumerate}
		\itemi The control-to-costate operator $\A$ is Frechét-differentiable on $\UR$, i.e., for any $u\in\UR$ there exists a unique bounded, linear operator 
		\begin{equation*}
		\A'(u)\colon L^2(L^2)\to\V_3, \quad h\mapsto \A'(u)[h]= (\phi_u',\tau_u',\rho_u',\w_u',q_u')[h],
		\end{equation*}
		such that
		\begin{equation*}
		\frac{\norm{\A(u+h)-\A(u)-\A'(u)[h]}_{\V_3}}{\norm{h}_{L^2(L^2)}}\to 0\quad \text{as }\norm{h}_{L^2(L^2)}\to 0.
		\end{equation*}
		For any $\u\in \U$ and $h\in L^2(L^2)$, the Fréchet-derivative $(\phi_u',\tau_u',\rho_u',\w_u',q_u')[h]$ is the unique solution of \eqref{EQ:ADJG} with 
		\begin{align*}
		G_0 &= \upalpha_0\varphi_u'[h](T),\\
		G_1&= (P\sigma_u-A-u)\h''(\varphi_u)\varphi_u'[h]\phi_u + \h'(\varphi_u)(P\sigma_u'[h]-h)\phi_u\\
		&\quad + \big(\h'(\varphi_u)\sigma_u'[h]+\h''(\varphi_u)\varphi_u'[h]\sigma_u\big)\rho_u + \psi^{(3)}(\varphi_u)\varphi_u'[h]\tau_u\\
		&\quad - \big(P\sigma_u'[h]\h'(\varphi_u) + (P\sigma_u-A)\h''(\varphi_u)\varphi_u'[h]\big)q_u - \phi_u\divergence(\v_u'[h]) +\upalpha_1\varphi_u'[h],\\
		G_2&= \grad\varphi_u'[h]\cdot\w_u ,\\
		G_3&= \h'(\varphi_u)\varphi_u'[h]\rho_u + P\h'(\varphi_u)\varphi_u'[h]\phi_u + \chi\grad\varphi_u'[h]\cdot \w_u - P\h'(\varphi_u)\varphi_u'[h]q_u,\\
		\G_1&= \varphi_u'[h]\grad\phi_u ,\\
		\G_2&= \phi_u\v_u'[h] - (\mu_u'[h]+\chi\sigma_u'[h]) \w_u
		\end{align*}
		and \eqref{EQ:ADJ2:7} replaced by
		\begin{equation}
		\label{EQ:ADJ3:7}(2\eta\D\w-q\I+\varphi_u\phi\I + \varphi_u'[h]\phi_u\I)\n = 0\quad \text{a.e. on }\GammaT.
		\end{equation}
		\itemii The Frechet-derivative is Lipschitz continuous, i.e., for any $u,\tu\in \UR$ and $h\in L^2(L^2)$, it holds that
		\begin{equation}
		\label{LIP:EST:FRE:DER:2} \norm{\A'(u)[\cdot]-\A'(\tu)[\cdot]}_{\mathcal L(L^2(L^2);\V_3)} \leq L_4 \norm{u-\tu}_{L^2(L^2)},
		\end{equation}
		with a constant $L_4>0$ depending only on the system parameters and on $\Omega$, $T$, $R$.
	\end{enumerate}
\end{proposition}

\begin{proof} 
	The proof proceeds similarly to the proof of Proposition \ref{LEM:FRE}.
	\paragraph{Proof of \textnormal{(i)}:}
	\textit{Step 1:} Existence of a solution to \eqref{EQ:ADJG} with the above choices for $(G_0,G_1,G_2,G_3,\G_1,\G_2)$ follows from a simple pressure reformulation argument. Indeed, let us define
	\begin{align*}
	&\tilde{\G}_1= -\phi_u\grad\varphi_u'[h],
	\quad\tilde{G}_1 = G_1 - (P\sigma_u-A)\h'(\varphi_u)\varphi_u'[h]\phi_u,\\
	&\tilde{G}_3 = G_3 - P\h(\varphi_u)\varphi_u'[h]\phi_u,
	\quad\tilde{G}_0= G_0,\quad \tilde{G}_2 = G_2,\quad \tilde{\G}_2 = \G_2.
	\end{align*}
	Using Proposition \ref{THM:EXS}, Proposition \ref{LEM:FRE} and Lemma \ref{LEM:ADJ}, it is straightforward to check that $(\tilde{G}_0,\tilde{G}_1,\tilde{G}_2,\tilde{G}_3,\tilde{\G}_1,\tilde{\G}_3)$ $\in \V_4$ with bounded norm. Therefore, there exists a unique weak solution $(\phi,\tau,\rho,\w,\pi)\in \V_3$ of \eqref{EQ:ADJG} according to $(G_0,G_1,G_2,G_3,\G_1,\G_2)=(\tilde{G}_0,\tilde{G}_1,\tilde{G}_2,\tilde{G}_3,\tilde{\G}_1,\tilde{\G}_3)\in \V_4$. We now define
	\begin{equation*}
	q = \pi + \varphi_u'[h]\phi_u. 
	\end{equation*}
	Using Proposition \ref{THM:EXS} and Lemma \ref{LEM:ADJ}, it follows that $\varphi_u'[h]\phi_u\in L^2(H^1)$ with bounded norm. Therefore, it follows that $(\phi,\tau,\rho,\w,q)\in \V_3$ is a weak solution of \eqref{EQ:ADJG} with $(G_0,G_1,G_2,G_3,\G_1,\G_2)$ as above and \eqref{EQ:ADJ2:7} replaced by \eqref{EQ:ADJ3:7}. Uniqueness of solutions of this system follows due to linearity of the system and estimate \eqref{LEM:ADJ:EST}. \\[1ex]
	\textit{Step 2:} In the following, we define
	\begin{equation*}
	(\phi_\CR^h,\tau_\CR^h,\rho_\CR^h,\w_\CR^h,q_\CR^h) = (\phi_{u+h},\tau_{u+h},\rho_{u+h},\w_{u+h},q_{u+h})-(\phi_u,\tau_u,\rho_u,\w_u,q_u)-(\phi_u'[h],\tau_u'[h],\rho_u'[h],\w_u'[h],q_u'[h]).
	\end{equation*}
	Then, we can check that $(\phi_\CR^h,\tau_\CR^h,\rho_\CR^h,\w_\CR^h,q_\CR^h)$ is the solution of \eqref{EQ:ADJG} with 
	\begin{align*}
	G_0 &= \upalpha_0\varphi_\CR^h(T),\\
	G_1&= (P\sigma_u-A-u)\left[\h''(\varphi_u)\varphi_\CR^h + \frac{1}{2}\h^{(3)}(\xi)(\varphi_{u+h}-\varphi_u)^2\right]\phi_u + P\sigma_{\CR}^h\h'(\varphi_u)\phi_u\\
	&\quad +  [P(\sigma_{u+h}-\sigma_u)-h](\h'(\varphi_{u+h})-\h'(\varphi_u))\phi_u\\
	&\quad + (\phi_{u+h}-\phi_u)\left[(P\sigma_u-A-u)(\h'(\varphi_{u+h})-\h'(\varphi_u)) + \big(P(\sigma_{u+h}-\sigma_u)-h\big)\h'(\varphi_{u+h})\right]\\
	&\quad + \rho_u\left[\h''(\varphi_u)\varphi_\CR^h\sigma_u +  \frac{1}{2}\h^{(3)}(\xi)(\varphi_{u+h}-\varphi_u)^2\,\sigma_u+ \h'(\varphi_u)\sigma_{\CR}^h +(\h'(\varphi_{u+h})-\h'(\varphi_u))(\sigma_{u+h}-\sigma_u)\right]\\
	&\quad +(\rho_{u+h}-\rho_u)\left[(\h'(\varphi_{u+h})-\h'(\varphi_u))\sigma_u + \h'(\varphi_{u+h})(\sigma_{u+h}-\sigma_u)\right]\\
	&\quad + \tau_u\left[\psi^{(3)}(\varphi_u)\varphi_{\CR}^h + \psi^{(4)}(\xi)(\varphi_{u+h}-\varphi_u)^2\right] + (\tau_{u+h}-\tau_u)(\psi''(\varphi_{u+h})-\psi''(\varphi_u))\\
	&\quad -(P\sigma_u-A)\left[\h''(\varphi_u)\varphi_\CR^h + \frac{1}{2}\h^{(3)}(\xi)(\varphi_{u+h}-\varphi_u)^2\right]q_u - P\sigma_{\CR}^h\h'(\varphi_u)q_u\\
	&\quad - P(\sigma_{u+h}-\sigma_u)(\h'(\varphi_{u+h})-\h'(\varphi_u))q_u\\
	&\quad - (q_{u+h}-q_u)\left[(P\sigma_u-A)(\h'(\varphi_{u+h})-\h'(\varphi_u)) + P(\sigma_{u+h}-\sigma_u)\h'(\varphi_{u+h})\right]\\
	&\quad - \phi_u\divergence(\v_{\CR}^h) - (\phi_{u+h}-\phi_u)\divergence(\v_{u+h}-\v_u) + \upalpha_1\varphi_\CR^h,\\
	G_2&= \grad\varphi_\CR^h\cdot \w_u + \grad(\varphi_{u+h}-\varphi_u)\cdot(\w_{u+h}-\w_u),\\
	G_3&= [\h'(\varphi_u)\varphi_\CR^h + \frac{1}{2}\h''(\xi)(\varphi_{u+h}-\varphi_u)^2]\rho_u + (\h(\varphi_{u+h})-\h(\varphi_u))(\rho_{u+h}-\rho_u)\\
	&\quad + P[\h'(\varphi_u)\varphi_{\CR}^h + \frac{1}{2}\h''(\xi)(\varphi_{u+h}-\varphi_u)^2](\phi_u-q_u)\\
	&\quad + P(\h(\varphi_{u+h})-\h(\varphi_u))[(\phi_{u+h}-\phi_u)-(q_{u+h}-q_u)]\\
	&\quad + \chi\grad\varphi_\CR^h\cdot (\w_{u+h}-\w_u) + \chi\grad(\varphi_{u+h}-\varphi_u)\cdot(\w_{u+h}-\w_u) ,\\ 
	\G_1&= \varphi_\CR^h\grad\phi_u + (\varphi_{u+h}-\varphi_u)\grad(\phi_{u+h}-\phi_u),\\
	\G_2&= \phi_u\v_{\CR}^h + (\phi_{u+h}-\phi_u)(\v_{u+h}-\v_u) - (\mu_{\CR}^h+\chi\sigma_{\CR}^h)\w_u \\
	&\quad - [(\mu_{u+h}+\chi\sigma_{u+h})-(\mu_u+\chi\sigma_u)](\w_{u+h}-\w_u)
	\end{align*}
	and \eqref{EQ:ADJ2:7} replaced by
	\begin{equation}
	\label{EQ:ADJ34:7} \left(2\eta\D\w - q\I + \varphi_u\phi\I + \varphi_\CR^h\phi_u\I + (\varphi_{u+h}-\varphi_u)(\phi_{u+h}-\phi_u)\I\right)\n = 0\quad \text{a.e. on }\GammaT.
	\end{equation}
	We now introduce a new pressure
	\begin{equation*}
	\pi_\CR^h = q_\CR^h + \varphi_\CR^h\phi_u + (\varphi_{u+h}-\varphi_u)(\phi_{u+h}-\phi_u)
	\end{equation*}
	and we define
	\begin{align*}
	\tilde{\G}_1&= -\phi_u\grad\varphi_\CR^h - (\phi_{u+h}-\phi_u)\grad(\varphi_{u+h}-\varphi_u) ,\\
	\tilde{G}_1 &= G_1 - (P\sigma_u-A)\h'(\varphi_u)\left[\varphi_\CR^h\phi_u + (\varphi_{u+h}-\varphi_u)(\phi_{u+h}-\phi_u)\right],\\
	\tilde{G}_3 &= G_3 - P\h(\varphi_u)\left[\varphi_\CR^h\phi_u + (\varphi_{u+h}-\varphi_u)(\phi_{u+h}-\phi_u)\right],\\
	\tilde{G}_0&= G_0,\quad \tilde{G}_2 = G_2,\quad \tilde{\G}_2 = \G_2.
	\end{align*}
	Then, we can check that $(\phi_\CR^h,\tau_\CR^h,\rho_\CR^h,\w_\CR^h,\pi_\CR^h)$ is a solution of \eqref{EQ:ADJG} according to $(G_0,G_1,G_2,G_3,\G_1,\G_2)=(\tilde{G}_0,\tilde{G}_1,\tilde{G}_2,\tilde{G}_3,\tilde{\G}_1,\tilde{\G}_2)$.\\[1ex]
	\textit{Step 3:} Using Proposition \ref{THM:EXS}, Corollary \ref{COR:LIP}, Proposition \ref{LEM:FRE}, Lemma \ref{LEM:ADJ} and Corollary \ref{COR:ADJ}, it can be checked that 
	\begin{align}
	\label{fre_adj_1}\norm{(\phi_\CR^h,\tau_\CR^h,\rho_\CR^h,\w_\CR^h,\pi_\CR^h)}_{\V_3} \leq C\norm{(\tilde{G}_0,\tilde{G}_1,\tilde{G}_2,\tilde{G}_3,\tilde{\G}_1,\tilde{\G}_2)}_{\V_4}\leq C\norm{h}_{L^2(L^2)}^2.
	\end{align}
	Using this inequality together with Corollary \ref{COR:LIP}, Proposition \ref{LEM:FRE}, Lemma \ref{LEM:ADJ} and Corollary \ref{COR:ADJ}, recalling the definition of $\V_3$ and the expression for $\pi_\CR^h$, it follows that
	\begin{align*}
	\norm{q_\CR^h}_{L^2(H^1)} &\leq \norm{\pi_\CR^h}_{L^2(H^1)} + \norm{\phi_u}_{L^{\infty}(H^1)}\norm{\varphi_\CR^h}_{L^{\infty}(H^2)}+ \norm{\phi_{u+h}-\phi_u}_{L^{\infty}(H^1)}\norm{\varphi_{u+h}-\varphi_u}_{L^{\infty}(H^2)}\\
	&\leq C\norm{h}_{L^2(L^2)}^2.
	\end{align*}
	In summary, we obtain 
	\begin{align}
	\label{fre_adj_2}\norm{(\phi_\CR^h,\tau_\CR^h,\rho_\CR^h,\w_\CR^h,q_\CR^h)}_{\V_3} \leq  C\norm{h}_{L^2(L^2)}^2.
	\end{align}
	\paragraph{Proof of \textnormal{(ii)}:} Since the operator $\S'(\cdot)[h]\colon \UR\to \V_1$ is Lipschitz-continuous for all $h\in L^2(L^2)$, the proof follows with similar arguments as the proof of Proposition \ref{PROP:ADJ:LIP}. 
\end{proof}

\section{The optimal control problem}

In this section we analyze the optimal control problem that was motivated in the introduction: We intend to minimize the \textit{\bfseries cost functional}
\begin{align*}
I(\varphi,u)&:= 
\frac{\upalpha_0} 2 \norml{2}{\varphi(T)-\varphi_f}^2
+ \frac{\upalpha_1} 2 \norm{\varphi-\varphi_d}_{L^2(L^2)}^2+ \frac{\kappa} 2\norm{u}_{L^2(L^2)}^2
\end{align*}
subject to the following conditions:
\begin{itemize}
	\item $u$ is an admissible control, i.e., $u\in\U$,
	\item $(\varphi,\mu,\sigma,\v,p)$ is a strong solution of the system \eqref{EQ:CHB} to the control $u$.
\end{itemize}
Using the control-to-state operator we can formulate this optimal control problem alternatively as
\begin{align}
	\label{OCP}
		\text{Minimize} &\quad J(u) \quad \text{s.t.}\quad u\in\U,
\end{align}
where the \textit{\bfseries reduced cost functional} $J$ is defined by
\begin{align}
\label{DEF:J}
	J(u):=I\big([\S_1(u)]_1,u\big)=I(\varphi_u,u), \quad u\in\U.
\end{align}
A globally/locally optimal control of this optimal control problem is defined as follows:

\bigskip

\begin{definition}
	Let $u\in\U$ be any admissible control.
	\begin{itemize}
		\itema $\u$ is called a \textbf{(globally) optimal control} of the problem \eqref{OCP} if $J(\u)\le J(u)$ for all $u\in\U$.
		\itemb $\u$ is called a \textbf{locally optimal control} of the problem \eqref{OCP} if there exists some $\delta>0$ such that  $J(\u)\le J(u)$ for all $u\in\U$ with $\norm{u-\u}_{L^2(L^2)}<\delta$.
	\end{itemize}
	In this case, $\S(\u)$ is called the corresponding \textbf{globally/locally optimal state}.
\end{definition}

\smallskip

\subsection{Existence of a globally optimal control}

Of course, the optimal control problem \eqref{OCP} does only make sense if there exists at least one globally optimal solution. This is established by the following Theorem:

\begin{theorem}
	\label{THM:EXC}
	The optimization problem \eqref{OCP} possesses a globally optimal solution.
\end{theorem}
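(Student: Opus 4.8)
The plan is to apply the direct method of the calculus of variations. First I would note that the reduced cost functional $J$ is bounded from below by zero, since it is a sum of squared norms weighted by the nonnegative constants $\upalpha_0$, $\upalpha_1$ and $\kappa$. Hence $m := \inf_{u\in\U} J(u) \ge 0$ is a well-defined finite number, and we may choose a minimizing sequence $(u_k)_{k\in\N} \subset \U$ with $J(u_k) \to m$ as $k\to\infty$.

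Next, since $\U$ is a bounded, convex and closed subset of the Hilbert space $L^2(L^2)$, it is weakly sequentially compact (as already observed after the definition of $\U$). Therefore, after extraction of a subsequence (not relabelled), $u_k \wto \u$ in $L^2(L^2)$ for some limit $\u\in\U$. Now I would invoke the weak compactness property of the control-to-state operator, Lemma \ref{LEM:COMP}: along a further subsequence, the corresponding states satisfy $\varphi_{u_k} \to \varphi_\u$ in $C([0,T];W^{1,r})\cap C(\overline{\OmegaT})$ for $r\in[1,6)$, and $(\varphi_\u,\mu_\u,\sigma_\u,\v_\u,p_\u) = \S(\u)$ is the strong solution of (CHB) to the control $\u$. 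In particular, $\varphi_{u_k}\to\varphi_\u$ in $L^2(\OmegaT)$ and $\varphi_{u_k}(T)\to\varphi_\u(T)$ in $L^2(\Omega)$.

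Finally I would pass to the limit in the cost functional. The two tracking terms converge to their values at $\u$ thanks to the strong convergences just listed, while the regularization term $\tfrac{\kappa}{2}\norm{\,\cdot\,}_{L^2(L^2)}^2$ is weakly sequentially lower semicontinuous because the norm is. Combining these,
\begin{equation*}
J(\u) \le \liminf_{k\to\infty} J(u_k) = m.
\end{equation*}
Since $\u\in\U$, the reverse inequality $J(\u)\ge m$ holds by definition of the infimum, so $J(\u) = m$, i.e., $\u$ is a globally optimal control.

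The only genuine difficulty is that the control-to-state operator is nonlinear, so one cannot simply exploit the weak convergence $u_k\wto\u$ to identify the limiting state directly; this is precisely circumvented by Lemma \ref{LEM:COMP}, which upgrades weak convergence of the controls to strong convergence of the relevant ($\varphi$-)components of the states, and that strong convergence is exactly what the nonconvex tracking terms need. The rest of the argument is routine.
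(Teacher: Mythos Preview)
Your proof is correct and follows exactly the approach indicated in the paper: the direct method in the calculus of variations, relying on the weak sequential compactness of $\U$ and on Lemma \ref{LEM:COMP} to upgrade weak convergence of controls to strong convergence of the $\varphi$-components of the states. The paper itself gives no further detail beyond this outline (referring to \cite[Thm.~17]{EbenbeckKnopf}), so your write-up is in fact more explicit than the paper's own proof.
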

\begin{proof} 
	The assertion can be proved by the direct method in the calculus of variations using the basics established in Section 3. A very similar proof can be found in \cite[Thm. 17]{EbenbeckKnopf}.
\end{proof}

\subsection{First-order necessary conditions for local optimality}

Obviously, Theorem \ref{THM:EXC} does not provide uniqueness of the globally optimal control $\u$. As the control-to-state operator is nonlinear we cannot expect the cost functional to be convex. Therefore, it is possible that the optimization problem has several locally optimal controls or even several globally optimal controls. In the following, since numerical methods will (in general) only detect local minimizers, our goal is to characterize locally optimal controls by necessary optimality conditions.\\[1ex]
Since the control-to-state operator is Fréchet differentiable according to Proposition \ref{LEM:FRE}, Fréchet differentiability of the cost functional easily follows by chain rule. If $\u\in\U$ is a locally optimal control, it must hold that $J'(\u)[u-\u]\ge 0$ for all $u\in \U$. The Fréchet derivative $J'(\u)$ can be described by means of the so-called adjoint state that was introduced in Section 4.\\[1ex]
In the following we characterize locally optimal controls of \eqref{OCP} by necessary conditions which are par\-ticularly important for computational methods. The adjoint variables can be used to express the variational inequality in a very concise form:

\begin{theorem}
	Let $\u\in\U$ be a locally optimal control of the minimization problem \eqref{OCP}. Then $\u$ satisfies the \textbf{variational inequality} that is
	\begin{align}
	\label{VIQ}
		J'(\u)[u-\u] = \intOT \big[\kappa\u -\phi_\u\, \h(\varphi_\u) \big] (u-\u) \dxt \ge 0\quad\text{for all}\; u\in\U.
	\end{align}
\end{theorem}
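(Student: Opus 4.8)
The plan is to derive the variational inequality in two stages: first establish the abstract first-order condition $J'(\u)[u-\u]\ge 0$ for all $u\in\U$, and then compute $J'(\u)$ explicitly in terms of the adjoint state. For the first stage, recall that $\S$ is Fréchet differentiable on $\UR\supset\U$ by Proposition \ref{LEM:FRE}, so $J = I(\varphi_u,u)$ is Fréchet differentiable on $\UR$ by the chain rule, the cost functional $I$ being a smooth quadratic functional of $(\varphi(T),\varphi,u)$. Since $\U$ is convex, for any $u\in\U$ the segment $\u + t(u-\u)$ lies in $\U$ for $t\in[0,1]$; local optimality of $\u$ gives $J(\u+t(u-\u))\ge J(\u)$ for small $t>0$, and dividing by $t$ and letting $t\downarrow 0$ yields $J'(\u)[u-\u]\ge 0$. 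This is the standard argument and presents no difficulty.

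The second stage is to identify $J'(\u)[h]$ with $\intOT [\kappa\u - \phi_\u\,\h(\varphi_\u)]\,h\dxt$. Writing $(\varphi,\mu,\sigma,\v,p) = \S'(\u)[h]$ for the linearized state (the solution of \eqref{EQ:LIN} with $F_1=F_3=F_4=0$, $\F=\mathbf 0$, $F_2 = -h\,\h(\varphi_\u)$ as in Proposition \ref{LEM:FRE}), the chain rule gives
\begin{equation*}
	J'(\u)[h] = \upalpha_0\big(\varphi_\u(T)-\varphi_f,\varphi(T)\big)_{L^2} + \upalpha_1\big(\varphi_\u-\varphi_d,\varphi\big)_{L^2(L^2)} + \kappa(\u,h)_{L^2(L^2)}.
\end{equation*}
The task is to rewrite the first two terms using the adjoint system (ADJ). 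The idea is to test the weak formulation of the linearized system \eqref{EQ:LIN} against the adjoint variables $(\phi_\u,\tau_\u,\rho_\u,\w_\u,q_\u)$ and, symmetrically, test the weak formulation (ADJ) against $(\varphi,\mu,\sigma,\v,p)$, then subtract. By construction the adjoint system is precisely the formal $L^2$-adjoint of the linearized operator, so all the bilinear coupling terms — the ones involving $\h'(\varphi_\u)$, $\psi''(\varphi_\u)$, $\grad\varphi_\u\cdot\w$, $\divergence(\varphi_\u\v)$, the Brinkman coupling $(\mu+\chi\sigma)\grad\varphi_\u$ versus $\varphi_\u\grad\phi$, etc. — cancel pairwise. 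What survives is, on one side, the inhomogeneity $F_2 = -h\,\h(\varphi_\u)$ paired with $\phi_\u$, and on the other side the source terms $\upalpha_0(\varphi_\u(T)-\varphi_f)$ and $\upalpha_1(\varphi_\u-\varphi_d)$ of (ADJ) paired with $\varphi$; the boundary terms in \eqref{EQ:ADJ7}, \eqref{EQ:ADJ8} are exactly what is needed to absorb the boundary contributions coming from the Brinkman and Cahn--Hilliard integrations by parts, and the terminal condition $\phi_\u(T)=\upalpha_0(\varphi_\u(T)-\varphi_f)$ together with $\varphi(0)=0$ handles the time-integration of $\langle\delt\phi_\u,\varphi\rangle + \langle\delt\varphi,\phi_\u\rangle$. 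The upshot is the identity
\begin{equation*}
	\upalpha_0\big(\varphi_\u(T)-\varphi_f,\varphi(T)\big)_{L^2} + \upalpha_1\big(\varphi_\u-\varphi_d,\varphi\big)_{L^2(L^2)} = -\intOT \phi_\u\,\h(\varphi_\u)\,h\dxt,
\end{equation*}
which, combined with the chain-rule formula above, yields exactly \eqref{VIQ}.

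The main obstacle is the bookkeeping in this duality computation: one must verify that every term produced by integrating by parts in the linearized system has a matching term in (ADJ), including all the boundary integrals on $\GammaT$ and the non-standard Neumann-type condition \eqref{EQ:ADJ8} for $\tau$, and one must check that the regularity of $\S'(\u)[h]\in\V_1$ and of $\A(\u)\in\V_3$ is sufficient to justify each integration by parts and each duality pairing (e.g.\ $\phi_\u\in L^\infty(H^1)\cap L^2(H^3)$ pairs against $\delt\varphi\in L^2(L^2)$, $\tau_\u\in L^2(H^1)$ pairs against $\laplace\varphi$, etc.). Since the adjoint system was derived precisely by the formal Lagrangian method applied to this cost functional and state system, the cancellation is guaranteed in principle; the work is to present it cleanly. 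I would organize it as: (1) chain rule for $J'$; (2) the duality identity via testing and subtracting; (3) conclusion. Given the length of such a computation and that the adjoint system is stated to come from ``formal Lagrangian technique,'' I would expect the paper to present this somewhat tersely, referring to the structure of (ADJ) rather than writing out all cancellations.
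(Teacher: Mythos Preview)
Your proposal is correct and follows exactly the standard approach the paper has in mind; in fact the paper does not write out the proof at all, merely stating that ``the assertion is a standard result from optimal control theory'' and referring to \cite[Thm.~17]{EbenbeckKnopf} for the details. Your two-stage plan (abstract first-order condition via convexity of $\U$, followed by the duality computation pairing the linearized system against the adjoint system) is precisely that standard argument, and your identification of the key cancellations and regularity checks is accurate.
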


\begin{proof} 
	The assertion is a standard result from optimal control theory. It can be proved by slight modifications of the proof of \cite[Thm. 17]{EbenbeckKnopf}. 
\end{proof}

As our set of admissible controls is a box-restricted subset of $L^2(L^2)$, a locally optimal control $\u$ can also be characterized by a projection of $\frac 1 \kappa\, \phi_\u\, \h(\varphi_\u)$ onto the set $\U$. 

\begin{corollary}
	\label{COR:PRF}
	Let $\u\in\U$ be a locally optimal control of the minimization problem \eqref{OCP}. Then $u$ is given implicitly by the \textbf{projection formula}
	\begin{align}
		\label{PRF}
		\u(x,t) = \mathbb P_{[\bar{a}(x,t),\bar{b}(x,t)]}\left(\frac 1 \kappa \,\phi_\u(x,t)\, \h\big(\varphi_\u(x,t)\big)\right)
		\quad \text{for almost all $(x,t)\in\Omega_T$},
	\end{align}
	where the projection $\mathbb P$ is defined by
	\begin{align*}
		\mathbb P_{[\zeta,\xi]}(s) = \min\big\{\xi,\max\{\zeta,s\}\big\},\qquad s\in\R,
	\end{align*}
		for any $\zeta,\xi\in\R$ with $\zeta\le\xi$. This constitutes another necessary condition for local optimality.
\end{corollary}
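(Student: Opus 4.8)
The plan is to derive the projection formula directly from the variational inequality \eqref{VIQ}, which is already available. First I would observe that \eqref{VIQ} can be rewritten, using the definition of the reduced cost functional and the box constraint, as a pointwise condition almost everywhere in $\OmegaT$. Concretely, set $g(x,t) := \kappa\,\u(x,t) - \phi_\u(x,t)\,\h\big(\varphi_\u(x,t)\big)$, so that \eqref{VIQ} reads $\intOT g\,(u-\u)\dxt \ge 0$ for all $u\in\U$. The key step is the standard pointwise localization argument for box-constrained admissible sets: since $\U$ consists precisely of those $u\in L^2(L^2)$ with $a \le u \le b$ almost everywhere, one tests \eqref{VIQ} with controls that differ from $\u$ only on an arbitrarily small measurable set. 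This forces $g(x,t)(v-\u(x,t)) \ge 0$ for almost every $(x,t)$ and every $v\in[a(x,t),b(x,t)]$; I would spell this out by choosing, for a measurable set $E$, the competitor $u = \u + \mathds{1}_E(v-\u)$ (or the analogous construction with $v$ a measurable selection) and letting $|E|\to 0$ together with a Lebesgue-point argument.

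Next I would translate the resulting pointwise inequality
\[
g(x,t)\,\big(v - \u(x,t)\big) \ge 0 \quad \text{for all } v\in[a(x,t),b(x,t)]
\]
into the statement that $\u(x,t)$ is the $L^2$-projection of $\tfrac1\kappa\,\phi_\u(x,t)\,\h(\varphi_\u(x,t))$ onto the interval $[a(x,t),b(x,t)]$. This is an elementary case distinction: if $\tfrac1\kappa\phi_\u\h(\varphi_\u)$ lies inside the interval then $g=0$ forces equality; if it lies below $a$ (resp. above $b$) then the sign of $g$ forces $\u = a$ (resp. $\u = b$). In each case this is exactly the value returned by $\mathbb P_{[a(x,t),b(x,t)]}$, which yields \eqref{PRF}. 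Finally, equivalence with \eqref{VIQ} follows because the pointwise projection identity can be integrated back up: if $\u$ satisfies \eqref{PRF} almost everywhere, then the pointwise inequality $g(x,t)(v-\u(x,t))\ge 0$ holds for all admissible $v$, and integrating over $\OmegaT$ against any $u\in\U$ recovers \eqref{VIQ}.

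I expect the only genuinely delicate point to be the measurability bookkeeping in the localization step — namely justifying that the competitor controls built from measurable selections $v(x,t)\in[a(x,t),b(x,t)]$ actually lie in $L^2(L^2)$ and hence in $\U$, and carrying out the Lebesgue differentiation cleanly so that the pointwise conclusion holds almost everywhere rather than merely in an integrated sense. Since $a,b\in L^2(L^2)$ and $\phi_\u\h(\varphi_\u)\in L^2(L^2)$ (the latter because $\phi_\u\in L^2(H^1)\hookrightarrow L^2(L^2)$ by Corollary \ref{COR:ADJ} and $\h$ is bounded by (A3)), this is routine but should be stated. Everything else is the textbook characterization of projections onto intervals, and I would simply cite the corresponding result (e.g. \cite[Lem.\,2.26]{troeltzsch}) rather than reprove it.
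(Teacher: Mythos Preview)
Your proposal is correct and follows precisely the standard argument that the paper itself invokes: the paper omits the proof entirely and refers to \cite[pp.\,71--73]{troeltzsch}, which is exactly the pointwise localization of the box-constrained variational inequality followed by the elementary case distinction you describe. Your remarks on the measurability bookkeeping and the integrability of $\phi_\u\,\h(\varphi_\u)$ are appropriate and already cover the only nontrivial technicality.
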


Since this is a well-known inference of the necessary optimality condition provided by the variational inequality, we omit the proof. For a similar proof we refer to \cite[pp.\,71-73]{troeltzsch}.

\begin{remark}
	The necessary optimality conditions \eqref{VIQ} and \eqref{PRF} are equivalent (cf. \cite[pp.\,71-73]{troeltzsch}).
\end{remark}

\subsection{A second-order sufficient condition for strict local optimality}

We also want to establish a sufficient condition for (strict) local optimality. Since the control-to-state operator $\S:\U_R\to \V_1$ and the control-to-costate operator $\A:\U_R\to \V_2$ are continuously Fréchet differentiable, so is the cost functional $J$ due to chain rule. \\[1ex]
Therefore we can easily establish a sufficient condition for strict local optimality: 
Let $\u\in\U$ satisfy the variational inequality \eqref{VIQ} (or the projection formula \eqref{PRF}, respectively) and we assume that $J''(\u)$ is positive definite, i.e., 
\begin{align}
\label{COND:POSDEF}
	J''(\u)[h,h] > 0 
\end{align}
for all directions $h\in L^2(L^2)\setminus\{0\}$. Then $\u$ is a strict local minimizer of $J$ on the set $\U$.\\[1ex]
However, this condition is far too restrictive as it suffices to require \eqref{COND:POSDEF} only for a certain class of critical directions. Such a condition for optimal control problems with general semilinear elliptic or parabolic PDE constraints was firstly established in \cite{TroeltzschCasasReyes}. Meanwhile, it can also be found, for instance, in the textbook \cite[pp.\,245-248]{troeltzsch}. We proceed similarly and define the cone of critical directions as follows:

\begin{definition}
	For $\u\in\U$, we define the set
	\begin{align}
	\label{DEF:KRITCONE}
	\C(\u) := \big\{ h \in L^2(L^2) \;\big\vert\; h\; \text{satisfies condition \eqref{COND:KRIT}} \big\}
	\end{align}
	where condition \eqref{COND:KRIT} reads as follows:
	\begin{align}
	\label{COND:KRIT}
	\text{For allmost all $(x,t)\in\OmegaT$}:\quad 
	h(x,t) \begin{cases}
	\ge 0, & \u(x,t) = \bar{a}(x,t), \\
	\le 0, & \u(x,t) = \bar{b}(x,t), \\
	= 0, & \kappa\u(x,t) - \phi_\u(x,t)\, \h(\varphi_\u)(x,t) \neq 0.
	\end{cases}
	\end{align}
	This set $\C(\u)$ is called the \textbf{cone of critical directions}.
\end{definition}

\bigskip

Now, we can use the cone $\C(\u)$ to formulate a sufficient condition for strict local optimality:

\begin{theorem}
	\label{THM:SUFF}
	Let $\u\in\U$ be any control satisfying the variational inequality \eqref{VIQ}. Moreover, we assume that $J''(\u)[h,h] > 0$ which is equivalent to
	\begin{align}
	\label{COND:SUFF}
	\intOT \big(\phi'_\u[h] \,\h(\varphi_\u) + \phi_\u \,\h'(\varphi_\u) \varphi'_\u[h] \big)\, h \dxt < \kappa \norm{h}_{L^2(L^2)}^2 \qquad\text{for all $h\in\C(\u)\setminus\{0\}$.}
	\end{align}
	Then $\u$ satisfies a quadratic growth condition, i.e., there exist $\delta,\theta>0$ such that for all $u\in\U$ with $\norm{u-\u}_{L^2(L^2)}<\delta$,
	\begin{align}
	\label{COND:QUAD}
	J(u) \ge J(\u) + \frac \theta 2 \norm{u-\u}_{L^2(L^2)}^2.
	\end{align}
	In particular, this means that $\u$ is a strict local minimizer of the functional $J$ on the set $\U$.
\end{theorem}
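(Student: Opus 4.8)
The plan is to follow the by-now classical strategy for second-order sufficient conditions with parabolic PDE constraints, as in \cite[pp.\,245-248]{troeltzsch} and \cite{TroeltzschCasasReyes}, adapted to the present setting where the second derivative of $J$ is expressed through the control-to-costate operator $\A$. The key structural facts I would invoke are: (i) $J$ is twice continuously Fréchet differentiable on $\UR$, which follows from chain rule together with the Fréchet differentiability and Lipschitz properties of $\S$ (Proposition \ref{LEM:FRE}) and $\A$ (Proposition \ref{LEM:FREADJ}); (ii) the first derivative admits the representation $J'(\u)[h] = \intOT (\kappa\u - \phi_\u\h(\varphi_\u))\,h \dxt$ from the variational inequality \eqref{VIQ}; and (iii) a Taylor expansion $J(u) = J(\u) + J'(\u)[u-\u] + \tfrac12 J''(\zeta_u)[u-\u,u-\u]$ with $\zeta_u$ on the segment between $\u$ and $u$.

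The argument proceeds in three steps. \emph{Step 1 (coercivity extends to a neighbourhood of the cone).} First I would argue by contradiction that \eqref{COND:SUFF} on $\C(\u)\setminus\{0\}$ implies the stronger statement: there exist $\delta_1>0$ and $\theta_1>0$ such that $J''(\u)[h,h] \ge \theta_1\norm{h}_{L^2(L^2)}^2$ for all $h$ in a suitable $L^2$-neighbourhood of the cone $\C(\u)$ (more precisely, for all $h = u-\u$ with $u\in\U$ whose active/critical structure is "close" to that prescribed by \eqref{COND:KRIT}). Suppose not; then there is a sequence $h_k$ with $\norm{h_k}_{L^2(L^2)}=1$, $\operatorname{dist}(h_k,\C(\u))\to 0$, and $J''(\u)[h_k,h_k]\to c\le 0$. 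Extract a weakly convergent subsequence $h_k\wto h$ in $L^2(L^2)$. One shows $h\in\C(\u)$ (the cone is convex and closed, hence weakly closed, and the distance condition passes to the limit). Then one splits $h_k = h + (h_k-h)$ and uses the structure of $J''$: the part of $J''(\u)$ coming from the tracking terms is a weakly lower semicontinuous quadratic form, while the "mixed" bilinear part built from $\S'$ and $\A'$ is weakly continuous because $\S'(\u)$ and $\A'(\u)$ are compact perturbations in the relevant spaces (the linearized state gains enough regularity that $\varphi'_\u[h_k]\to\varphi'_\u[h]$ and $\phi'_\u[h_k]\to\phi'_\u[h]$ strongly in $L^2(L^2)$ by compact embedding). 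Passing to the limit gives $c \ge J''(\u)[h,h] - \kappa\,\limsup(1-\norm{h}^2_{L^2})\cdot(\text{something})$; carefully bookkeeping the $\kappa\norm{h_k}^2$ term and using \eqref{COND:SUFF} for $h$ (if $h\ne 0$) or directly $J''(\u)[h_k,h_k]\to\kappa>0$ (if $h=0$) yields the contradiction $c>0$.

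\emph{Step 2 (local uniformity of the second derivative).} Since $J''$ is continuous on $\UR$ and $\U$ is bounded, for the $\delta_1,\theta_1$ from Step 1 there is $\delta_2\in(0,\delta_1]$ such that $\big| J''(\zeta)[h,h] - J''(\u)[h,h]\big| \le \tfrac{\theta_1}{2}\norm{h}_{L^2(L^2)}^2$ whenever $\norm{\zeta-\u}_{L^2(L^2)}<\delta_2$; this uses the Lipschitz continuity of $\S'$ and $\A'$ (and hence of $J''$) established in Propositions \ref{LEM:FRE}(ii) and \ref{LEM:FREADJ}(ii). \emph{Step 3 (conclusion via Taylor expansion).} Let $u\in\U$ with $\norm{u-\u}_{L^2(L^2)}<\delta:=\delta_2$ and set $h=u-\u$. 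If $h$ lies (up to the distance tolerance of Step 1) in the critical cone neighbourhood, combine the Taylor formula with $J'(\u)[h]\ge 0$ (variational inequality) and Steps 1--2 to get $J(u)-J(\u) \ge \tfrac12 J''(\zeta_u)[h,h] \ge \tfrac{\theta_1}{4}\norm{h}_{L^2(L^2)}^2$. For directions $h$ that are \emph{not} close to the cone, the "strongly active" part of the variational inequality gives a linear lower bound $J'(\u)[h]\ge c\norm{h}_{L^2(L^2)}$ on that component, which dominates the (bounded) negative contribution of $J''$; this is the standard splitting of $h$ into a cone-like part and a strongly-active part. Either way one obtains \eqref{COND:QUAD} with $\theta = \theta_1/2$ (shrinking $\delta$ once more if needed to absorb cross terms), and strict local optimality is immediate. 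The main obstacle is Step 1: one must verify that the bilinear form $J''(\u)$ genuinely decomposes into a weakly-lower-semicontinuous principal part plus a weakly-continuous remainder, which requires checking that the linearized-state and linearized-costate maps applied to a weakly convergent sequence of controls converge \emph{strongly} in $L^2(L^2)$ — this is where the improved regularity of $\S$ and $\A$ in the spaces $\V_1,\V_3$ (and the resulting compact embeddings, e.g.\ $\varphi'_\u[h]\in H^1(L^2)\cap L^2(H^4)$) is essential and is precisely the advantage over \cite{EbenbeckKnopf} highlighted in the introduction.
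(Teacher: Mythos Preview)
Your proposal is correct and follows essentially the same route as the paper. Both arguments reduce the theorem to the abstract second-order sufficiency machinery of \cite{TroeltzschCasasReyes} (cf.\ also \cite[pp.\,245--248]{troeltzsch}); the paper does this compactly by verifying two continuity properties, namely $J'(u_k)[h_k]\to J'(\u)[h]$ whenever $u_k\to\u$ strongly and $h_k\wto h$ weakly, and $\B(h_k,h_k)\to\B(h,h)$ whenever $h_k\wto h$ weakly, and then invokes \cite[Lem.\,4.1]{TroeltzschCasasReyes} directly, whereas you spell out the full contradiction argument yourself. The decisive technical point---that the ``mixed'' bilinear part of $J''(\u)$ is weakly sequentially continuous because the compact embeddings $H^1(L^2)\cap L^\infty(H^2)\hookrightarrow C(\overline{\OmegaT})$ and $H^1((H^1)^*)\cap L^2(H^3)\hookrightarrow L^2(L^2)$ upgrade weak convergence of $h_k$ to strong convergence of $\varphi'_\u[h_k]$ and $\phi'_\u[h_k]$---is identified and used in the same way in both proofs.
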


\begin{proof}
	The second-order Fréchet derivative of $J$ is given by
	\begin{align}
	\label{EQ:FD2}
		J''(\u)[h_1,h_2] = 
		\kappa\, \big(h_1\,{,}\,h_2 \big)_{L^2(L^2)} - \intOT \big(\phi'_\u[h_1] \,\h(\varphi_\u) + \phi_\u \,\h'(\varphi_\u) \varphi'_\u[h_1] \big)\, h_2 \dxt
	\end{align}
	for all $h_1,h_2\in L^2(L^2)$. Thus, condition \eqref{COND:SUFF} is equivalent to 
	\begin{align}
	\label{EQ:SUFF:1}
		J''(\u)[h,h] \ge 0 \quad \qquad\text{for all $h\in\C(\u)\setminus\{0\}$.}
	\end{align}
	Studying the proof of \cite[Lem.\,4.1]{TroeltzschCasasReyes} carefully, we find out that our theorem can be proved analogously if the following assertions can be established:
	\begin{enumerate}
		\itemi For any sequences $(u_k)\subset\U$ and $(h_k)\subset L^2(L^2)$ with $u_k\to\u$ and $h_k\wto h$ in $L^2(L^2)$, it holds that
		\begin{align}
		\label{EQ:SUFF:A1}
			J'(u_k)[h_k] \to J'(\u)[h] \qquad \text{as}\; k\to\infty.
		\end{align}
		\itemii For any sequence  $(h_k)\subset L^2(L^2)$ with $h_k\wto h$ in $L^2(L^2)$, it holds that
		\begin{align}
		\label{EQ:SUFF:A2}
			\B(h_k,h_k) \to \B(h,h) \qquad \text{as}\; k\to\infty
		\end{align}
		up to subsequence extraction where the bilinear form $\B$ is defined by
		\begin{align*}
			\B: L^2(L^2)\times L^2(L^2) \to \R, \quad (h_1,h_2) 
			\mapsto \intOT \big(\phi'_\u[h_1] \,\h(\varphi_\u) + \phi_\u \,\h'(\varphi_\u) \varphi'_\u[h_1] \big)\, h_2 \dxt.
		\end{align*}
	\end{enumerate}
	\paragraph{Proof of \textnormal{(i)}:} Let $(u_k)\subset\U$ and $(h_k)\subset L^2(L^2)$ with $u_k\to\u$ and $h_k\wto h$ in $L^2(L^2)$ be arbitrary. Recall that the first-order Fréchet derivative of $J$ is given by
	\begin{align}
	\label{EQ:FD1}
		J'(u)[h] = \intOT \big[\kappa u -\phi_u\, \h(\varphi_u) \big]\, h \dxt ,\qquad u\in \U,\,h\in L^2(L^2).
	\end{align}
	Since $\kappa \u -\phi_\u\, \h(\varphi_\u)$ lies in $L^2(L^2)$ it directly follows that $J'(\u)[h_k] \to J'(\u)[h]$. Furthermore, we can use the Lipschitz estimates from Corollary \ref{COR:LIP} and Proposition \ref{PROP:ADJ:LIP} to conclude that 
	\begin{align*}
		\kappa u_k -\phi_{u_k}\, \h(\varphi_{u_k}) \;\to\; \kappa \u -\phi_{\u}\, \h(\varphi_{\u}) \quad \text{in }L^2(L^2)\quad \text{as }k\to \infty.
	\end{align*}
	Now, since $(h_k)$ is uniformly bounded in $L^2(L^2)$, we obtain that
	\begin{align*}
		\big| J'(u_k)[h_k] - J'(\u)[h_k] \big| 
			\le \big\|	\kappa u_k -\phi_{u_k}\, \h(\varphi_{u_k}) - \kappa \u +\phi_{\u}\, \h(\varphi_{\u})\big\|_{2}
			\, \big\|h_k\big\|_{2} \to 0 \quad \text{as}\; k\to\infty.
	\end{align*}
	Consequently,
	\begin{align*}
		J'(u_k)[h_k] - J'(\u)[h] = J'(u_k)[h_k] - J'(\u)[h_k] + J'(\u)[h_k] - J'(\u)[h] \to 0 
		\quad \text{as}\; k\to\infty
	\end{align*}
	which proves (i).
	\paragraph{Proof of \textnormal{(ii)}:} The proof is very similar to the proof of (i). Let $(h_k)\subset L^2(L^2)$ with $h_k\wto h$ in $L^2(L^2)$ be any sequence. As $\phi'_\u[h] \,\h(\varphi_\u) + \phi_\u \,\h'(\varphi_\u) \varphi'_\u[h]$ lies in $L^2(L^2)$, we have $\B(h,h_k) \to \B(h,h)$. Moreover, due to Proposition \ref{LEM:FRE}, Proposition \ref{LEM:FREADJ} and the compact embeddings 
	\begin{align*}
		H^1(L^2)\cap L^\infty(H^2) \hookrightarrow C(\overline{\OmegaT}) \tand H^1((H^1)^*)\cap L^2(H^3) \hookrightarrow L^2(L^2),
	\end{align*}
	we obtain that
	\begin{align*}
		\big\|\varphi_\u'[h_k] - \varphi_\u'[h]\big\|_\infty \to 0 
		\tand 
		\big\|\phi_\u'[h_k] - \phi_\u'[h]\big\|_2 \to 0,
		\quad\text{as}\; k\to\infty,
	\end{align*}
	after extraction of a subsequence. Hence, we can conclude that
	\begin{align*}
		\phi'_\u[h_k] \,\h(\varphi_\u) + \phi_\u \,\h'(\varphi_\u) \varphi'_\u[h_k] 
			\;\to\; \phi'_\u[h] \,\h(\varphi_\u) + \phi_\u \,\h'(\varphi_\u) \varphi'_\u[h]
			\qquad \text{in $L^2(L^2)$ as $k\to \infty$}
	\end{align*}
	by means of Hölder's inequality. As $(h_k)$ is a bounded sequence in $L^2(L^2)$, it follows that 
	\begin{align*}
		&\big| \B(h_k,h_k) \to \B(h,h_k)  \big| \\
		&\quad \le \big\|	\phi'_\u[h_k] \,\h(\varphi_\u) + \phi_\u \,\h'(\varphi_\u) \varphi'_\u[h_k] 
			- 	\phi'_\u[h] \,\h(\varphi_\u) - \phi_\u \,\h'(\varphi_\u) \varphi'_\u[h] \big\|_{2} 
			\,\big\|h_k\big\|_{2} 
		\;\to\; 0 \quad \text{as}\; k\to\infty
	\end{align*}
	and thus 
	\begin{align*}
		\B(h_k,h_k) - \B(h,h) = \B(h_k,h_k) - \B(h,h_k) + \B(h,h_k) - \B(h,h) \to 0 \quad \text{as}\; k\to\infty
	\end{align*}
	which proves (ii).
\end{proof}

\subsection{A condition for global optimality of critical controls}
Even if a control $\u\in\U$ satisfies the sufficient optimality condition from Theorem \ref{THM:SUFF} it is absolutely not clear whether this control is globally optimal. However, in this section, we will establish a globality criterion for controls which satisfy the variational inequality or the projection formula, respectively. In the following, these controls will be referred to as \textbf{critical controls}. \\[1ex] 
The technique we are using was firstly introduced in \cite{AliDeckelnickHinze} for optimal control problems constrained by a general semilinear elliptic PDE of second order. Recently it has also been adapted for optimal control of the obstacle problem, see \cite{AliDeckelnickHinze2}. Our globality condition will be proved similarly and reads as follows: 

\begin{theorem}
	\label{THM:GLOB}
	Suppose that $\upalpha_1 > 0$ and $\kappa > 0$ and let $C_1$ and $L_1$ denote the constants from Proposition \ref{THM:EXS} and Corollary \ref{COR:LIP}. Moreover, we set
	\begin{align*}
		r := \underset{u\in\U}{\sup} \norm{\varphi_u}_\infty \le C_1.
	\end{align*}
	We assume that the control $\u\in\U$ satisfies the variational inequality \eqref{VIQ} (or the projection formula \eqref{PRF}, respectively) and that one of the following conditions holds:
	\begin{itemize}
		\item[\textnormal{(G1)}] It holds that
		\begin{align}
			\label{COND:G1}
			\frac \kappa 2 &\ge \Big[ \big\|(P\sigma_\u - A)(\phi_\u - q_\u) - \sigma_\u\rho_\u - \u\phi_\u \big\|_1
			\,\norm{\h''}_{L^\infty(\R)}\,L_1^2 \notag\\
			&\qquad\quad + \norm{\tau_\u}_1\, \norm{\psi'''}_{L^\infty([-r,r])} L_1^2 
			+ \norm{\phi_\u}_2 \,\norm{\h'}_{L^\infty(\R)}\,L_1 \Big].
		\end{align}
		\item[\textnormal{(G2)}] There exists a real number $\theta>0$ such that
		\begin{align}
			\label{COND:G2:1}
			2\kappa\theta \ge \norml{\infty}{\phi_\u}^2 \,\norm{\h''}_{L^\infty(\R)} 
		\end{align}
		and
		\begin{align}
		\label{COND:G2:2}
		\begin{aligned}
			\frac {\upalpha_1} 2 \;\ge\; &\big\|(P\sigma_\u - A)(\phi_\u - q_\u) - \sigma_\u\rho_\u - \u\phi_\u \big\|_\infty  \norm{\h''}_{L^\infty(\R)}   \\
			&+ \norm{\tau_\u}_\infty \norm{\psi'''}_{L^\infty([-r,r])} + \theta \norm{\phi_\u}_\infty^2 \norm{\h'}_{L^\infty(\R)}^2.
		\end{aligned}
		\end{align}
	\end{itemize}
	Then $\u$ is a globally optimal control of problem \eqref{OCP}. \\[1ex]
	In addition, the globally optimal control $\u$ is unique if one of the following conditions holds:
	\begin{itemize}
		\item[\textnormal{(U1)}] Condition \textnormal{(G1)} is satisfied and \eqref{COND:G1} holds with ``$\,>$" instead of ``$\,\ge$".
		\item[\textnormal{(U2)}] Condition \textnormal{(G2)} is satisfied and \eqref{COND:G2:1} holds with ``$\,>$" instead of ``$\,\ge$".
	\end{itemize}
\end{theorem}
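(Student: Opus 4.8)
The plan is to adapt the technique of \cite{AliDeckelnickHinze,AliDeckelnickHinze2}. Fix a critical control $\u\in\U$ (one satisfying \eqref{VIQ}) and an arbitrary $u\in\U$, and write $(\varphi,\mu,\sigma,\v,p):=\S(u)-\S(\u)$ for the difference of the associated states and $(\tilde\varphi,\dots):=\S'(\u)[u-\u]$ for the linearization at $\u$, which is well defined on all of $L^2(L^2)$ by Proposition~\ref{LEM:FRE}. First I would split $J=\tfrac\kappa2\norm{\,\cdot\,}_{L^2(L^2)}^2+\Phi$ with $\Phi(u):=\tfrac{\upalpha_0}2\norml2{\varphi_u(T)-\varphi_f}^2+\tfrac{\upalpha_1}2\norm{\varphi_u-\varphi_d}_{L^2(L^2)}^2$; applying $\tfrac12\norm x^2-\tfrac12\norm y^2=(y,x-y)+\tfrac12\norm{x-y}^2$ to each quadratic term of $J$ and using $J'=\kappa u+\Phi'$ together with \eqref{VIQ}, one obtains
\begin{align}
\label{PLAN:split}
J(u)-J(\u)=\underbrace{J'(\u)[u-\u]}_{\ge0}+\frac\kappa2\norm{u-\u}_{L^2(L^2)}^2+\big(\Phi(u)-\Phi(\u)-\Phi'(\u)[u-\u]\big),
\end{align}
so it suffices to bound the last bracket from below by $-\tfrac\kappa2\norm{u-\u}_{L^2(L^2)}^2$, strictly for $u\ne\u$ to obtain the uniqueness statements. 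Expanding the quadratic terms of $\Phi$ once more and using $\Phi'(\u)[h]=-\intOT\phi_\u\,\h(\varphi_\u)\,h\dxt$ (from \eqref{EQ:FD1}), that bracket equals $\tfrac{\upalpha_1}2\norm{\varphi}_{L^2(L^2)}^2+\tfrac{\upalpha_0}2\norml2{\varphi(T)}^2+E$, where $E:=\upalpha_1(\varphi_\u-\varphi_d,\varphi)_{L^2(L^2)}+\upalpha_0(\varphi_\u(T)-\varphi_f,\varphi(T))_{L^2}+\intOT\phi_\u\,\h(\varphi_\u)(u-\u)\dxt$.

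The central step is to derive an explicit identity for $E$ by duality. By the linearity argument from the proof of Proposition~\ref{LEM:FRE} (with $h=u-\u$), the remainder $(\varphi-\tilde\varphi,\dots)$ solves the linearized system \eqref{EQ:LIN} at $\u$ with the quadratic data $F_1=P\CR_2-A\CR_1$, $\F=\CR_4$, $F_2=P\CR_2-A\CR_1-\CR_3-\CR_5$, $F_3=\CR_6$, $F_4=-\CR_2$, where $\CR_1,\dots,\CR_6$ are the Taylor remainders of that proof. I would test this system against the costate $\A(\u)=(\phi_\u,\tau_\u,\rho_\u,\w_\u,q_\u)$, test \textnormal{(ADJ)} against $(\varphi-\tilde\varphi,\dots)$, add the two identities and integrate by parts in time (using $(\varphi-\tilde\varphi)(0)=0$ and $\phi_\u(T)=\upalpha_0(\varphi_\u(T)-\varphi_f)$); all terms linear in $(\varphi-\tilde\varphi,\dots)$ cancel by construction of \textnormal{(ADJ)}, and inserting $\h(\varphi_u)-\h(\varphi_\u)=\h'(\varphi_\u)\varphi+\tfrac12\h''(\zeta)\varphi^2$ and $\psi'(\varphi_u)-\psi'(\varphi_\u)=\psi''(\varphi_\u)\varphi+\tfrac12\psi'''(\xi)\varphi^2$ (with $\zeta,\xi$ pointwise in $[-r,r]$), one is left with
\begin{align}
\label{PLAN:E}
E=-\frac12\intOT\h''(\zeta)\,\varphi^2\big[(P\sigma_\u-A)(\phi_\u-q_\u)-\sigma_\u\rho_\u-\u\,\phi_\u\big]\dxt-\frac12\intOT\psi'''(\xi)\,\varphi^2\,\tau_\u\dxt-\intOT\phi_\u\,\h'(\zeta)\,(u-\u)\,\varphi\dxt,
\end{align}
the bilinear coupling remainders from $\CR_2,\CR_4,\CR_5$ either cancelling in the pairing or being absorbed into these three integrals.

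Finally I would estimate \eqref{PLAN:E} in two ways, using $\norm{\varphi}_{L^\infty(\OmegaT)}\le L_1\norm{u-\u}_{L^2(L^2)}$ (from Corollary~\ref{COR:LIP} and $H^2\hookrightarrow L^\infty$). Under \textnormal{(G1)} I would drop the two nonnegative norm terms and bound the three integrals of \eqref{PLAN:E} by Hölder's inequality, producing exactly the three summands on the right-hand side of \eqref{COND:G1} times $\norm{u-\u}_{L^2(L^2)}^2$ ($L^1$-norms of the costate weights against $\norm{\varphi}_\infty^2$ for the first two, $\norm{\phi_\u}_2$ against $\norm{\varphi}_\infty\norm{u-\u}_2$ for the third), so \eqref{COND:G1} gives $|E|\le\tfrac\kappa2\norm{u-\u}_{L^2(L^2)}^2$ and, with \eqref{PLAN:split}, $J(u)\ge J(\u)$. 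Under \textnormal{(G2)} I would instead keep $\tfrac{\upalpha_1}2\norm{\varphi}_{L^2(L^2)}^2$, bound the $\h''$- and $\psi'''$-integrals by $L^\infty$-norms of their costate weights times $\norm{\varphi}_{L^2(L^2)}^2$, and split the mixed $(u-\u)\,\varphi$-contributions (the cross term, plus the part of the $\h''$-integral carrying the merely $L^2$-control factor $\u$) via Young's inequality with parameter $\theta$, so that their $(u-\u)$-parts are absorbed into $\tfrac\kappa2\norm{u-\u}_{L^2(L^2)}^2$ (governed by \eqref{COND:G2:1}) and the resulting $\varphi^2$-terms into $\tfrac{\upalpha_1}2\norm{\varphi}_{L^2(L^2)}^2$ (governed by \eqref{COND:G2:2}); again $J(u)\ge J(\u)$ follows. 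For uniqueness, if $u\ne\u$ then $\norm{u-\u}_{L^2(L^2)}>0$, and replacing ``$\ge$'' by ``$>$'' in \eqref{COND:G1} (resp.\ \eqref{COND:G2:1}) makes the corresponding inequality strict, so $J(u)>J(\u)$, i.e.\ \textnormal{(U1)} (resp.\ \textnormal{(U2)}). I expect the main obstacle to be the duality computation behind \eqref{PLAN:E} — matching each Taylor remainder to the correct costate component and sign, handling the pressure and the boundary term $(\,\cdots+\varphi_\u\phi\I)\n$ in \textnormal{(ADJ)}, and checking that the bilinear remainders $\CR_2,\CR_4,\CR_5$ generate nothing beyond \eqref{PLAN:E} — together with calibrating $\theta$ so that the two parts of the mixed terms land precisely on conditions \eqref{COND:G2:1}--\eqref{COND:G2:2}; a careful reading of the proofs of Proposition~\ref{LEM:FRE} and Lemma~\ref{LEM:ADJ} should supply the required bounds.
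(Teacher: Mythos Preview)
Your plan is correct and essentially parallel to the paper's argument, but with one organizational difference worth noting. The paper does \emph{not} pass through the linearization $\tilde\varphi=\S'(\u)[u-\u]$ and the remainder system from Proposition~\ref{LEM:FRE}; instead it tests the full nonlinear difference system \eqref{EQ:DIFF} (for $\S(u)-\S(\u)$) directly against the costate $(\phi_\u,\tau_\u,\rho_\u,\w_\u,q_\u)$, and Taylor-expands only afterward. Both routes deliver the same identity (your \eqref{PLAN:E}, the paper's \eqref{EQ:GLOB7}), but the direct route makes the disappearance of the bilinear remainders $\CR_2^{\text{bil}},\CR_4,\CR_5$ automatic: all terms involving $\tsigma,\tmu,\tv$ cancel in the summation of the five tested equations, leaving only $\tvarphi^2$- and $(u-\u)\tvarphi$-terms. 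In your formulation these bilinear pieces sit in the data $F_i$ of the duality pairing and must be argued to combine to zero --- you flag this as ``the main obstacle'' but leave it vague. The cleanest resolution is precisely to abandon the split $\varphi=(\varphi-\tilde\varphi)+\tilde\varphi$ and test \eqref{EQ:DIFF} against (ADJ) directly.

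Your (G1) estimate is exactly the paper's. For (G2) you slightly overcomplicate matters: there is no need to isolate ``the part of the $\h''$-integral carrying the merely $L^2$-control factor $\u$''. The paper simply bounds the first two integrals of \eqref{PLAN:E} by the $L^\infty$-norms of their full costate weights (including $\u\phi_\u$) times $\norm{\varphi}_{L^2(L^2)}^2$, and applies Young's inequality with parameter $\theta$ \emph{only} to the mixed integral $\int(u-\u)\phi_\u\h'\varphi$, producing $\theta\norm{\phi_\u}_\infty^2\norm{\h'}_\infty^2\norm{\varphi}_2^2+\tfrac{1}{4\theta}\norm{u-\u}_2^2$. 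Conditions \eqref{COND:G2:1}--\eqref{COND:G2:2} then absorb these into $\tfrac\kappa2\norm{u-\u}_2^2$ and $\tfrac{\upalpha_1}2\norm{\varphi}_2^2$ respectively. If $\u\phi_\u\notin L^\infty$ the condition is simply not satisfied; no separate splitting is required.
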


\begin{remark}$\;$ \label{REM:UNI} 
	\begin{enumerate}
		\itema Of course, for the double-well potential $\psi$, we have $\psi'''(s)=6s$ and thus $\norm{\psi'''}_{L^\infty([-r,r])} = 6r$.
		\itemb The conditions (G1) and (G2) will be satisfied if the adjoint variables $\phi_\u$, $\tau_\u$, $\rho_\u$ and $q_\u$ are sufficiently small in the occurring norms. 
		\itemc Since the state and adjoint variables are sufficiently regular, the right-hand side of \eqref{COND:G1} is at least always finite. However, is seems very difficult to verify the condition (G1) by numerical methods as the Lipschitz constant $L_1$ which depends on the domain $\Omega$ has to be determined.
		\itemd Condition (G2) has the advantage that all occurring quantities except for $\norm{\psi'''}_{L^\infty([-r,r])}$ can be computed very easily.
		However, the constant $r$ can hardly be determined explicitly. \\[1ex]
		To overcome this disadvantage, one can use a modified version $\psi_\delta$ of the double-well potential such that $\psi_\delta\in C^\infty(\R)$ with $\psi = \psi_\delta$ on $[-\delta,\delta]$ 
		for some $\delta>1$ and $\psi_\delta'''$ bounded (cf. Example \ref{EX:POT}). It is not difficult to see that all other results in this paper remain true after this replacement (cf. Remark \ref{REM:PSI_H}(a)). \\[1ex]
		Of course, if $\delta> r$ the values of the state and costate variables will not change if $\psi$ is replaced by $\psi_\delta$. Various numerical results for the Cahn--Hilliard equation have shown that $1\le r \ll 2$ can be expected, i.e., $r$ is usually very close to one (see, e.g., \cite{HawkinsZeeKristofferOdenTinsley}). 
		\\[1ex]
		The strategy to verify (G2) is as follows: First, find $\theta$ such that \eqref{COND:G2:2} holds with equality because then $\theta$ is chosen as large as possible. Then check whether \eqref{COND:G2:1} is satisfied.
	\end{enumerate}
\end{remark}

In light of Remark \ref{REM:UNI}(d) we demonstrate a possible construction of such a potential $\psi_\delta$ in the following example.

\begin{example}\normalfont
\label{EX:POT}
It is well known that the function
\begin{align*}
	\zeta: \R\to\R,\quad s \mapsto 
	\begin{cases}
		0, &\text{if}\; s\le 0, \\
		\exp(-s^{-2}), &\text{if}\; s> 0
	\end{cases}
\end{align*}
is smooth. For any $\delta>1$, we define the function
\begin{align*}
	\xi_\delta: [0,\infty) \to\R,\quad s \mapsto 
	\begin{cases}
		1, &\text{if}\; s\le \delta, \\
		 \zeta(1+s-\delta) \zeta(1+\delta-s) /  \zeta(1)^2 , &\text{if}\; \delta < s < \delta + 1,  \\
		0 &\text{if}\; s \ge \delta + 1.
	\end{cases}
\end{align*}
It is straightforward to check that $\xi_\delta$ is smooth with $0 \le \xi_\delta \le 1$. Now, we construct the approximate potential $\psi_\delta$ by
\begin{align*}
	\psi_\delta: [0,\infty) \to\R,\quad s \mapsto \frac 1 4 - \frac 1 2 s^2 + 6 \int_0^s \int_0^x \int_0^y z\, \xi_\delta(|z|) \, \mathrm dz\mathrm dy\mathrm dx.	
\end{align*}
One can easily see that $\psi_\delta(s) \to\psi(s)$ for all $s\in\R$ as $\delta$ tends to infinity. Of course, $\psi_\delta$ is smooth with $\psi_\delta'''(s) = 6s\, \xi_\delta(s)$ and it holds that $\psi_\delta = \psi$ on $[-\delta,\delta]$. Thus, we obtain the bound
\begin{align*}
	\norm{\psi_\delta'''}_{L^{\infty}([-r,r])} \le \norm{\psi_\delta'''}_{L^{\infty}(\R)} \le 6(\delta+1).
\end{align*}
This means that the term $\norm{\psi_\delta'''}_{L^{\infty}([-r,r])}$ in condition (G2) can simply be replaced by $6(\delta+1)$ and thus, explicit knowledge of the constant $r$ is no longer required. The plots in \autoref{FIG:1} and \autoref{FIG:2} show that $\psi_\delta$ is a good approximation to $\psi$ even if $\delta>1$ is very close to one. 

\begin{figure}[h!]
\begin{minipage}{0.49\textwidth}
	\centering
	\includegraphics[scale=0.55]{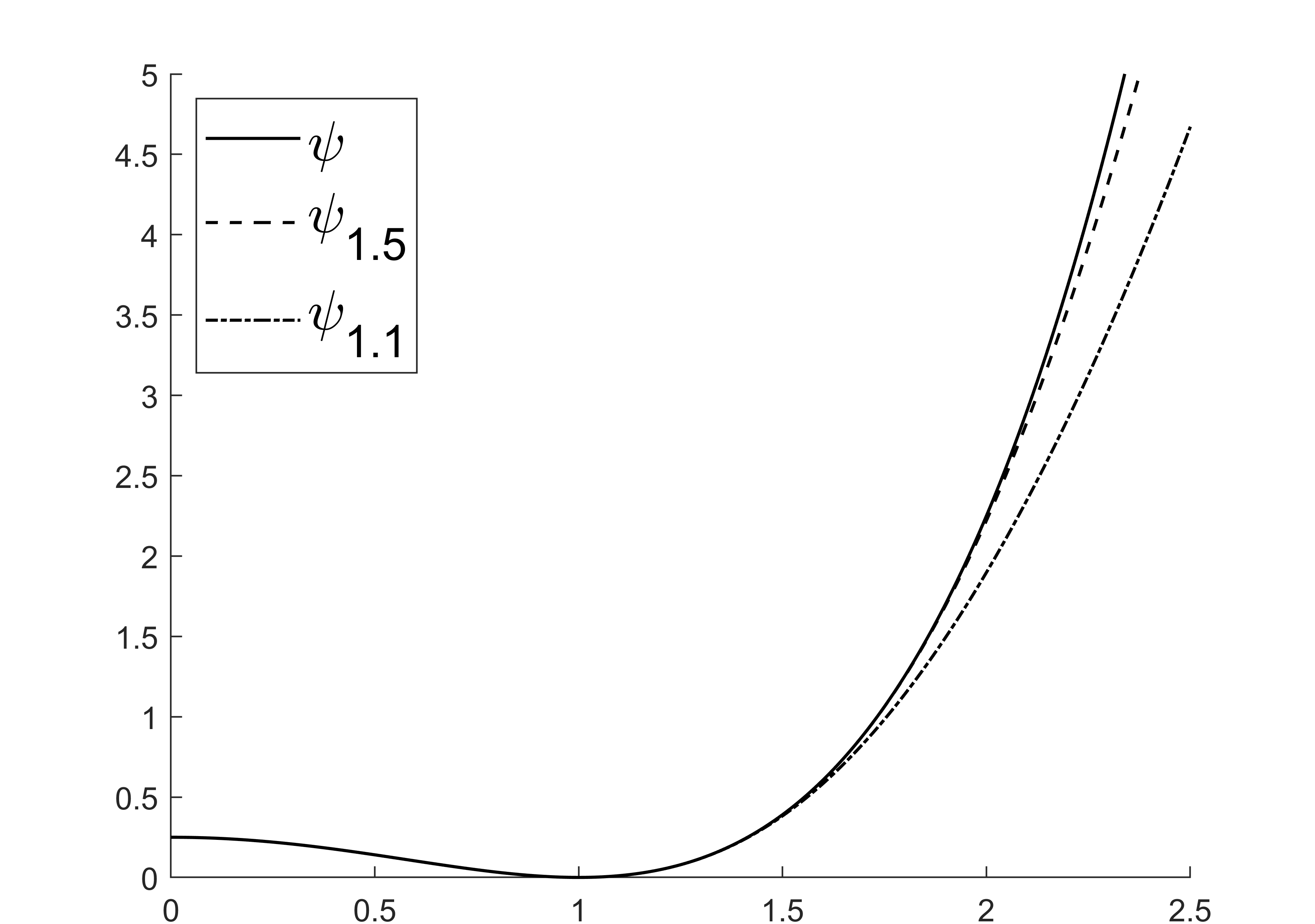}
	\caption{Plot of the double-well potential $\psi$ and \\ the approximate potentials $\psi_{1.5}$ and $\psi_{1.1}$.}
	\label{FIG:1}
\end{minipage}
\hfill
\begin{minipage}{0.49\textwidth}
	\centering
	\includegraphics[scale=0.55]{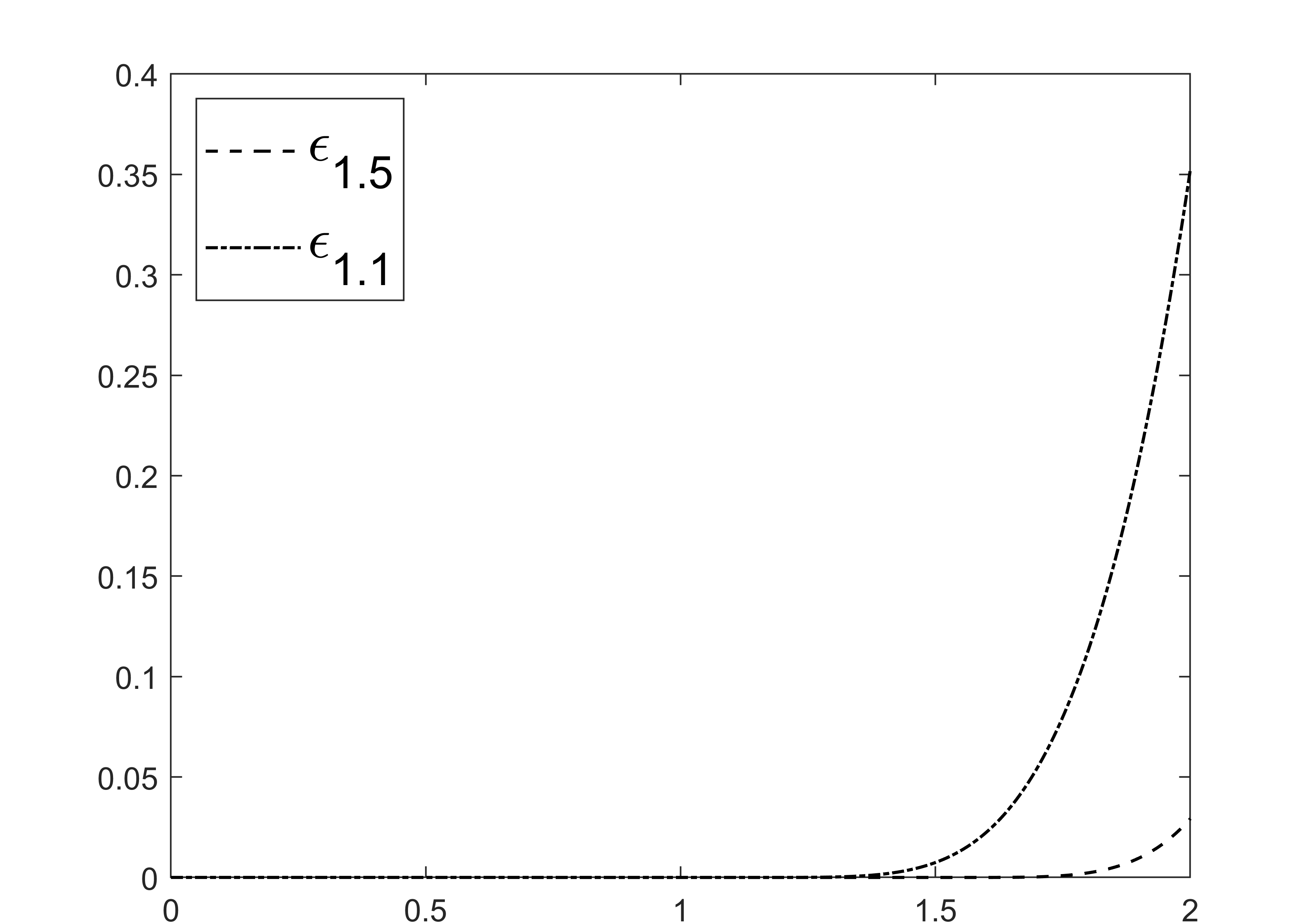}
	\caption{Plot of the error $\epsilon_{1.5} := \psi - \psi_{1.5} $ \\and $\epsilon_{1.1} := \psi - \psi_{1.1}$.}
	\label{FIG:2}
\end{minipage}
\end{figure}
\end{example}

\FloatBarrier

We will now present the proof of our theorem:

\begin{proof}[Proof of Theorem \ref{THM:GLOB}] To prove global optimality of the control $\u$, we intend to show that
	$J(u)-J(\u)\ge 0$ for all $u\in \U\setminus\{\u\}$. The proof is divided into three steps.
	\paragraph{Step 1:}  Let $u\in\U$ be arbitrary. Recall that 
	\begin{align*}
	\big( \kappa\u \,{,}\, u - \u \big)_{L^2(L^2)} \ge - \big( \phi_\u \,\h(\varphi_\u) \,{,}\, u - \u \big)_{L^2(L^2)}
	\end{align*}
	due to the variational inequality \eqref{VIQ}. Then, by a straightforward computation, we obtain that
	\begin{align}
	\label{EQ:GLOB0}
	J(u)-J(\u) 
	&\ge  \frac{\upalpha_0}{2} \norm{\varphi_u(T) - \varphi_\u(T)}_{L^2}^2
	+ \frac{\upalpha_1}{2} \norm{\varphi_u - \varphi_\u}_{L^2(L^2)}^2
	+ \frac{\kappa}{2} \norm{u - \u}_{L^2(L^2)}^2 + \CR,
	\end{align}
	where
	\begin{align*}
	\CR := \alpha_0 \big( \varphi_\u(T) - \varphi_f \,{,}\, \varphi_u(T) - \varphi_\u(T) \big)_{L^2} + \alpha_1 \big( \varphi_\u - \varphi_d \,{,}\, \varphi_u - \varphi_\u \big)_{L^2(L^2)} - \big( \phi_\u \,\h(\varphi_\u) \,{,}\, u - \u \big)_{L^2(L^2)}.
	\end{align*}
	Our aim is to show that
	\begin{align}
	\label{EQ:GLOB:R}
		|\CR|\le \frac{\upalpha_1}{2} \norm{\varphi_u - \varphi_\u}_{L^2(L^2)}^2 + \frac{\kappa}{2} \norm{u - \u}_{L^2(L^2)}^2
		\quad \text{for all}\; u\in\U\setminus\{\u\}
	\end{align}
	if condition (G1) or condition (G2) is fulfilled. Then \eqref{EQ:GLOB0} yields $J(u) \ge J(\u)$ and global optimality of $\u$ directly follows. 
	\paragraph{Step 2:} The idea is to express the remainder $\CR$ by the adjoint variables. For brevity, we write
	\begin{align*}
	(\tvarphi,\tmu,\tsigma,\tv,\tp) := (\varphi_u,\mu_u,\sigma_u,\v_u,p_u) - (\varphi_\u,\mu_\u,\sigma_\u,\v_\u,p_\u).
	\end{align*}
	This means that $(\tvarphi,\tmu,\tsigma,\tv,\tp)$ is a solution of \eqref{EQ:DIFF}. In the following, the strategy is to test the equations of the system \eqref{EQ:DIFF} with the adjoint variables. Testing \eqref{EQ:DIFF3} with $\phi_\u$ and integration by parts with respect to $t$ yields
	\begin{align*}
	0 &= \intOT \Big[\delt\tvarphi +\divergence(\varphi_u\tv) +\divergence(\tvarphi\v_\u) -m\laplace\tmu - P\tsigma\h(\varphi_u) - (P\sigma_\u-A)\big(\h(\varphi_u) - \h(\varphi_\u)\big)\\
	&\qquad\qquad + \big( u\h(\varphi_u) - \u\h(\varphi_\u) \big) \Big] \;\phi_\u \dxt \\
	&=  \intOT \divergence(\varphi_u\tv)\phi_\u +\divergence(\tvarphi\v_\u)\phi_\u -m\laplace\tmu\,\phi_\u - P\tsigma\h(\varphi_u)\phi_\u - (P\sigma_\u-A)\big(\h(\varphi_u) - \h(\varphi_\u)\big)\phi_\u \dxt \\
	&\quad + \intOT u\h(\varphi_u)\phi_\u - \u\h(\varphi_\u)\phi_\u \dxt   + \intO \tvarphi(T)\,\phi_\u(T) \dx - \int_0^T \langle \delt\phi_\u\,{,}\, \tvarphi \rangle_{H^1} \dt.
	\end{align*}
	Now, the term $\langle \delt\phi_\u\,{,}\, \tvarphi \rangle_{H^1}$ can be expressed by \eqref{WF:ADJ3} with test function $\tphi = \tvarphi$. We obtain that
	\begin{align}
	\label{EQ:GLOB1}
		0 &= \intOT \hspace{-8pt}\divergence(\varphi_u\tv)\phi_\u  
			- P\tsigma\h(\varphi_u)\phi_\u - (P\sigma_\u-A)\big(\h(\varphi_u) - \h(\varphi_\u)\big)\phi_\u  + u\h(\varphi_u)\phi_\u - \u\h(\varphi_\u)\phi_\u\dxt \notag\\
		&\quad + \intOT\hspace{-8pt} -m\laplace\tmu\,\phi_\u + (P\sigma_\u-A-\u)\h'(\varphi_\u)\phi_\u\tvarphi + \h'(\varphi_\u)\sigma_\u \rho_\u\tvarphi 
			+ \psi''(\varphi_\u)\tau_\u\tvarphi - (P\sigma_\u - A)\h'(\varphi_\u) q_\u\tvarphi \dxt \notag\\
		&\quad + \intOT \upalpha_1 (\varphi_\u - \varphi_d)\tvarphi + (\mu_\u + \chi\sigma_\u)\w_\u\cdot\grad\tvarphi +\grad\tau_\u\cdot\grad\tvarphi \dxt 
			+ \intO \tvarphi(T)\,\phi_\u(T) \dx.
	\end{align}
	Furthermore, testing \eqref{EQ:DIFF2} with $\w_\u$ yields
	\begin{align*}
		0 &= \intOT -\divergence\big( T(\tv,\tp) \big)\cdot \w_\u + \nu\tv\cdot\w_\u - (\tmu + \chi\tsigma) \grad\varphi_u \cdot \w_\u - (\mu_\u + \chi\sigma_\u) \grad\tvarphi \cdot \w_\u \dxt \\
		&= \intOT 2\eta \D\w_\u : \grad\tv + \nu\tv\cdot\w_\u - (\tmu + \chi\tsigma) \grad\varphi_u \cdot \w_\u - (\mu_\u + \chi\sigma_\u) \grad\tvarphi \cdot \w_\u \dxt	
	\end{align*}
	due to the definition of $T(\tv,\tp)$ and the fact that $\divergence(\w_\u)=0$. The term $2\eta \D\w_\u : \grad\tv$ can be expressed by choosing $\tw = \tv$ in \eqref{WF:ADJ2}. Thus,
	\begin{align}
	\label{EQ:GLOB2}
		0 &= \intOT \divergence(\tv)q_\u - \phi_\u \grad\varphi_u\cdot\tv - \phi_\u \varphi_u \divergence(\tv) - (\tmu + \chi\tsigma) \grad\varphi_u \cdot \w_\u 
			- (\mu_\u + \chi\sigma_\u) \grad\tvarphi \cdot \w_\u \dxt.
	\end{align}
	Proceeding similarly with the other subequations of \eqref{EQ:DIFF} gives
	\begin{align}
	\label{EQ:GLOB3}
		0&= \intOT \grad\varphi_\u\cdot\w_\u\,\tmu + m\laplace\tmu\,\phi_\u - \grad\tvarphi\cdot\grad\tau_\u - \big(\psi'(\varphi_u) - \psi'(\varphi_\u)\big)\tau_\u + \chi\tsigma\tau_\u \dxt, \\
	\label{EQ:GLOB4}
		0&= \intOT -\divergence(\tv)q_\u + P\tsigma\h(\varphi_u)q_\u + (P\sigma_u-A)\big( \h(\varphi_u) - \h(\varphi_\u) \big)q_\u \dxt, \\
	\label{EQ:GLOB5}
		0&= \intOT -\chi\tau_\u\tsigma + P\tsigma\h(\varphi_u)\phi_\u  + \chi\tsigma \grad\varphi_u\cdot\w_\u  - P\tsigma \h(\varphi_u)q_\u - \sigma_\u \rho_\u \big( \h(\varphi_u) - \h(\varphi_\u) \big) \dxt.
	\end{align}
	Adding up \eqref{EQ:GLOB1}-\eqref{EQ:GLOB5}, we ascertain that a large number of terms cancels out. We end up with
	\begin{align}
	\label{EQ:GLOB6}
		0 &= \intO \phi_\u(T) \tvarphi(T) \dx + \upalpha_1 \intOT (\varphi_\u - \varphi_d) \tvarphi \dxt + \intOT (u - \u)\h(\varphi_u)\phi_\u \dxt \notag \\
		&\quad + \intOT (P\sigma_\u - A) \big(\h(\varphi_u) - \h(\varphi_\u) - \h'(\varphi_u)\tvarphi \big)(q_\u - \phi_\u) \dxt \notag \\
		&\quad + \intOT \sigma_\u\rho_\u \big(\h(\varphi_u) - \h(\varphi_\u) - \h'(\varphi_u)\tvarphi \big)  \dxt \notag \\
		&\quad - \intOT \big( \psi'(\varphi_u) - \psi'(\varphi_\u) - \psi''(\varphi_\u)\tvarphi \big)\tau_\u \dxt \notag \\
		&\quad + \intOT \u\big( \h(\varphi_u) - \h(\varphi_\u) - \h'(\varphi_\u)\tvarphi \big)\phi_\u + (u-\u)\big( \h(\varphi_u) - \h(\varphi_\u) \big)\phi_\u \dxt.
	\end{align}
	Since $\phi_\u(T) = \upalpha(\varphi_\u(T) -\varphi_f)$, the first three terms on the right-hand side of \eqref{EQ:GLOB6} are equal to $\CR$. Moreover, using Taylor expansion, we can find $\zeta,\xi,\theta \in [0,1]$ such that 
	\begin{align*}
		\h(\varphi_u) - \h(\varphi_\u) &= \h'(\varphi_\zeta)\tvarphi^2 &&\twith \varphi_\zeta = \varphi_\u + \zeta(\varphi_u-\varphi_\u), \\
		\h(\varphi_u) - \h(\varphi_\u) - \h'(\varphi_u)\tvarphi &= \h''(\varphi_\xi)\tvarphi^2 &&\twith \varphi_\xi = \varphi_\u + \xi(\varphi_u-\varphi_\u), \\
		\psi'(\varphi_u) - \psi'(\varphi_\u) - \psi''(\varphi_u)\tvarphi &= \psi'''(\varphi_\theta)\tvarphi^2 &&\twith \varphi_\theta = \varphi_\u + \theta(\varphi_u-\varphi_\u).
	\end{align*}
	
	\pagebreak[2]
	Hence, it follows that 
	\begin{align}
	\label{EQ:GLOB7}
		\CR &= \intOT \Big[(P\sigma_\u - A)(\phi_\u - q_\u) - \sigma_\u\rho_\u - \u \phi_\u \Big] \h''(\varphi_\xi)\tvarphi^2  \dxt \notag \\
		&\quad + \intOT \tau_\u \psi'''(\varphi_\theta)\tvarphi^2 \dxt  + \intOT(u-\u) \phi_\u \h'(\varphi_\zeta)\tvarphi \dxt.
	\end{align}
	\paragraph{Step 3:} Now, we will use \eqref{EQ:GLOB7} to prove estimates in the fashion of \eqref{EQ:GLOB:R}. A simple computation gives
	\begin{align}
	\label{EQ:GLOB8}
		|\CR| &\le \Big[ \big\|(P\sigma_\u - A)(\phi_\u - q_\u) - \sigma_\u\rho_\u - \u\phi_\u \big\|_1
			\norm{\h''}_{L^\infty(\R)}\,L_1^2   \notag\\
		&\qquad + \norm{\tau_\u}_1\, \norm{\psi'''}_{L^\infty([-r,r])}\, L_1^2 
			+ \norm{\phi_\u}_2 \norm{\h'}_{L^\infty(\R)}\,L_1 \Big] \, \norm{u-\u}_2^2,
	\end{align}
	since $\norm{\varphi_\theta}_\infty \le r$. Furthermore, using Young's inequality with $\theta$, the remainder $\CR$ can also be bounded by
	\begin{align}
	\label{EQ:GLOB9}
		|\CR| &\le \Big[ \big\|(P\sigma_\u - A)(\phi_\u - q_\u) - \sigma_\u\rho_\u - \u\phi_\u \big\|_\infty  \norm{\h''}_{L^\infty(\R)}  \notag\\
		&\qquad\quad + \norm{\tau_\u}_\infty \norm{\psi'''}_{L^\infty([-r,r])} + \theta \norm{\phi_\u}_\infty^2 \norm{\h'}_{L^\infty(\R)}^2 \Big] \norm{\tvarphi}_{L^2(L^2)}^2 \notag\\
		&\qquad + \frac{1}{4\theta} \norm{\phi_\u}_\infty^2 \norm{\h'}_{L^\infty(\R)}^2 \norm{u-\u}_{L^2(L^2)}^2.
	\end{align}
	Hence, if condition (G1) or condition (G2) is satisfied, we can use \eqref{EQ:GLOB8} or \eqref{EQ:GLOB9} to conclude that
	\begin{align}
		|\CR| &\le \frac {\upalpha_1} 2 \norm{\tvarphi}_{L^2(L^2)}^2 + \frac \kappa 2 \norm{u-\u}_{L^2(L^2)}^2
	\end{align}
	and inequality \eqref{EQ:GLOB0} implies that $\u$ is a globally optimal control. If, in addition, either condition (U1) or (U2) is satisfied, it even holds that
	\begin{align}
		|\CR| &< \frac {\upalpha_1} 2 \norm{\tvarphi}_{L^2(L^2)}^2 + \frac \kappa 2 \norm{u-\u}_{L^2(L^2)}^2.
	\end{align}
	Then \eqref{EQ:GLOB0} implies that
	\begin{align*}
		J(u) > J(\u) \quad \text{for all}\; u\in\U\setminus\{\u\}
	\end{align*}
	and uniqueness of the globally optimal control $\u$ follows.
\end{proof}

\subsection{Uniqueness of the optimal control on small time intervals}

Finally, we present a condition on $T$ which ensures uniqueness of the optimal control. A similar result was established, e.g., in \cite{Knopf,KnopfWeber}. The idea behind the approach is as follows: If we choose the final time $T$ sufficiently small, the state equation will differ only slightly from its linearization. In the case $\kappa>0$, a linearized state equation would produce a strictly convex cost functional and the corresponding optimal control would be unique. If $T$ is small enough, we can expect that this property transfers to our problem. On the other hand, if the parameter $\kappa$ is large, the strictly convex part of the cost functional $J$ will be more dominant. Thus, it is not surprising that the size of the time interval on which the optimal control is unique will also depend on $\kappa$.\\[1ex]
In our theorem, we use the following notation: For any $p\in [1,6]$, let $c_\Omega(p) \ge 0$ denote a constant for which Sobolev's inequality
\begin{align*}
	\norm{v}_{L^p(\Omega)} \le c_\Omega(p)\, \norm{v}_{H^1(\Omega)}, \quad \text{for all}\; v\in H^1(\Omega),
\end{align*}
is satisfied.

\begin{theorem}
	\label{THM:UNILOC}
	Suppose that $\kappa>0$ and let $\u\in\U$ be a locally optimal control of problem \eqref{OCP}. Let $p,q \in [3,6]$ with $\frac 1 p + \frac 1 q= \frac 1 2$ be arbitrary.\\[1ex]
	Moreover, we assume that
	\begin{align}
	\label{ASS:UNIQ}
	T < \left(\frac{\sqrt{3}\,\kappa}
		{2 \big(L_3  + \sqrt{2} L_1  c_\Omega(p)\, c_\Omega(q) \|\phi_\u\|_{L^\infty(H^1)}\, \|\h'\|_{L^\infty(\R)}\big)}\right)^{4/3}.
	\end{align}
	Then, $\u$ is the unique locally optimal control.
\end{theorem}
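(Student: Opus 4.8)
\smallskip

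\noindent\textit{Plan of proof.}
The plan is to exploit the first-order necessary optimality condition for both candidate minimisers. Let $\tu\in\U$ be any locally optimal control; then both $\u$ and $\tu$ satisfy the variational inequality \eqref{VIQ}, i.e.\ $\intOT[\kappa\u-\phi_\u\,\h(\varphi_\u)]\,(v-\u)\dxt\ge 0$ and $\intOT[\kappa\tu-\phi_\tu\,\h(\varphi_\tu)]\,(v-\tu)\dxt\ge 0$ for all $v\in\U$. First I would test the inequality for $\u$ with $v=\tu$, the one for $\tu$ with $v=\u$, and add the two resulting inequalities; everything cancels except
\begin{equation}
\label{EQ:UNILOC:KEY}
\kappa\,\norm{\u-\tu}_{L^2(L^2)}^2 \;\le\; \intOT \big(\phi_\u\,\h(\varphi_\u) - \phi_\tu\,\h(\varphi_\tu)\big)\,(\u-\tu) \dxt .
\end{equation}
The whole argument then reduces to bounding the right-hand side of \eqref{EQ:UNILOC:KEY} by $c(T)\,\norm{\u-\tu}_{L^2(L^2)}^2$, where
\begin{equation*}
c(T) := \tfrac{2}{\sqrt{3}}\Big(\sqrt{2}\,L_1\,c_\Omega(p)\,c_\Omega(q) + L_3\,\norm{\phi_\u}_{L^2(H^1)}\,\norm{\h'}_{L^\infty(\R)}\Big)\,T^{3/4} ,
\end{equation*}
since assumption \eqref{ASS:UNIQ} is precisely the statement $c(T)<\kappa$, so that \eqref{EQ:UNILOC:KEY} would force $\norm{\u-\tu}_{L^2(L^2)}=0$, i.e.\ $\u=\tu$. (A locally optimal control exists at all by Theorem~\ref{THM:EXC}.)

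\smallskip

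\noindent To estimate the right-hand side of \eqref{EQ:UNILOC:KEY} I would decompose
\begin{equation*}
\phi_\u\,\h(\varphi_\u) - \phi_\tu\,\h(\varphi_\tu) \;=\; (\phi_\u-\phi_\tu)\,\h(\varphi_\tu) \;+\; \phi_\u\,\big(\h(\varphi_\u)-\h(\varphi_\tu)\big)
\end{equation*}
and treat the two contributions separately. For the first one I would use $|\h|\le 1$ together with the Lipschitz estimate for the control-to-costate operator from Proposition~\ref{PROP:ADJ:LIP} (this is where $L_3$ enters); for the second one, the mean value theorem gives $|\h(\varphi_\u)-\h(\varphi_\tu)|\le\norm{\h'}_{L^\infty(\R)}\,|\varphi_\u-\varphi_\tu|$, and then I would use Hölder's inequality in space with the exponents $p$ and $q$ (legitimate since $\tfrac1p+\tfrac1q+\tfrac12=1$), the Sobolev embeddings $H^1\hookrightarrow L^p$ and $H^1\hookrightarrow L^q$ (which produce $c_\Omega(p)$, $c_\Omega(q)$ and $\norm{\phi_\u}_{L^2(H^1)}$), and the Lipschitz estimate for the control-to-state operator from Corollary~\ref{COR:LIP} (this is where $L_1$ enters). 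The decisive point — and the only place where the smallness of $T$ is used — is that in each contribution one can gain a positive power of $T$: the relevant pieces of $\varphi_\u-\varphi_\tu$, $\phi_\u-\phi_\tu$ and $\phi_\u$ actually enjoy better-than-$L^2$ time integrability (for instance $\varphi_\u-\varphi_\tu\in L^\infty(H^2)$ and $\phi_\u-\phi_\tu,\phi_\u\in L^\infty(H^1)$ as parts of $\V_1$, $\V_3$), so that after placing the factors in suitable Bochner spaces and applying Hölder's inequality in time one is left with the factor $T^{3/4}$. Adding the two contributions gives the bound $c(T)\,\norm{\u-\tu}_{L^2(L^2)}^2$ announced above, and the theorem follows.

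\smallskip

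\noindent The hard part will be the careful bookkeeping needed to land exactly on the constant $c(T)$: one must distribute the time- and space-integrability so that the regularity of $\varphi_\u-\varphi_\tu\in\V_1$, of $\phi_\u-\phi_\tu\in\V_3$ and of the reference costate $\phi_\u$ is exploited as sharply as possible, making sure that only $L_1$, $L_3$, $c_\Omega(p)$, $c_\Omega(q)$, $\norm{\phi_\u}_{L^2(H^1)}$ and $\norm{\h'}_{L^\infty(\R)}$ appear in the end rather than cruder quantities; the exact numerical factors $\tfrac{2}{\sqrt{3}}$ and $\sqrt{2}$ then come out of an optimised Young-type splitting in the two estimates. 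Everything else is a routine, if somewhat lengthy, computation of the same type as those already carried out in the proofs of Corollary~\ref{COR:LIP} and Proposition~\ref{PROP:ADJ:LIP}.
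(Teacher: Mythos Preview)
Your overall plan --- start from the first-order optimality conditions satisfied by both $\u$ and a hypothetical second minimiser $\tu$, combine them to obtain a self-bounding inequality for $\|\u-\tu\|_{L^2(L^2)}$, decompose $\phi_\u\h(\varphi_\u)-\phi_\tu\h(\varphi_\tu)$, and feed in the Lipschitz constants $L_1,L_3$ --- is exactly the skeleton of the paper's argument. (The paper uses the equivalent projection formula \eqref{PRF} rather than the variational inequality \eqref{VIQ}, which yields the pointwise bound $|u-\u|\le\kappa^{-1}|\phi_u\h(\varphi_u)-\phi_\u\h(\varphi_\u)|$; your integrated version via \eqref{VIQ} leads to the same place.)

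There is, however, a genuine gap in the step you describe as ``better-than-$L^2$ time integrability \dots\ Hölder's inequality in time \dots\ factor $T^{3/4}$''. Mere $L^\infty$-in-time bounds combined with Hölder in time can produce at most a factor $T^{1/2}$, not $T^{3/4}$: for instance $\|\phi_\u-\phi_\tu\|_{L^2(L^2)}\le T^{1/2}\|\phi_\u-\phi_\tu\|_{L^\infty(L^2)}\le T^{1/2}L_3\|\u-\tu\|_{L^2(L^2)}$, and similarly for the $\varphi$-term. With that you would prove a theorem, but with the cruder threshold $T<(\ldots)^2$ rather than $(\ldots)^{4/3}$.

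The mechanism that actually yields $T^{3/4}$ --- and that your sketch does not mention --- exploits the \emph{zero initial datum} $\varphi_u(0)-\varphi_\u(0)=0$ together with the $H^1(L^2)$ time regularity: one writes
\[
\|\varphi_u(t)-\varphi_\u(t)\|_{L^2}^2
= 2\int_0^t\big(\del_s(\varphi_u-\varphi_\u),\varphi_u-\varphi_\u\big)_{L^2}\ds
\le 2\sqrt{t}\,\|\varphi_u-\varphi_\u\|_{H^1(L^2)}\|\varphi_u-\varphi_\u\|_{L^\infty(L^2)}
\le 2L_1^2\sqrt{t}\,\|\u-\tu\|_{L^2(L^2)}^2,
\]
and likewise for the gradient; integrating $\sqrt{t}$ over $[0,T]$ gives $\tfrac{2}{3}T^{3/2}$, whence $\|\varphi_u-\varphi_\u\|_{L^2(H^1)}\le \tfrac{2\sqrt{2}}{\sqrt{3}}L_1T^{3/4}\|\u-\tu\|_{L^2(L^2)}$. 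The analogous backward-in-time computation from $t=T$ is used for $\phi_u-\phi_\u$. The constants $\tfrac{2}{\sqrt{3}}$ and $\sqrt{2}$ therefore come from $\int_0^T\sqrt{t}\,\d t=\tfrac{2}{3}T^{3/2}$ and from adding the $L^2$ and gradient contributions to the $H^1$-norm, not from a Young-type splitting as you suggest. Once these refined $T^{3/4}$-estimates are in hand, the rest of your argument (Hölder in space with exponents $p,q$, Sobolev constants $c_\Omega(p),c_\Omega(q)$, and the Lipschitz bounds) goes through and matches the paper.
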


\medskip

%\noindent\textbf{Comment.} 
%\begin{enumerate}
%	\itema Due to condition \eqref{ASS:UNIQ}, the Sobolev constant $c_\Omega(p)$ should be chosen as small as possible.
%	\itemb 
%\end{enumerate}
%
%\medskip

\begin{proof}
	Let us assume that there exists another locally optimal control $u$. Then, it holds that 
	\begin{align*}
		&\|\varphi_u(t) - \varphi_\u(t)\|_{L^2}^2 
		\le 2 \int_0^t \|\del_s\varphi_u(s) - \del_s\varphi_\u(s)\|_{L^2}\,
			\|\varphi_u(s) - \varphi_\u(s)\|_{L^2} \ds\\
		&\quad \le 2\sqrt{t}\,\|\varphi_u - \varphi_\u\|_{H^1(L^2)}\, 
			\|\varphi_u - \varphi_\u\|_{L^\infty(L^2)}
		\le 2 L_1^2 \sqrt{t}\, \|u-\u\|_{L^2(L^2)}^2.
	\end{align*}
	Using integration by parts, we also obtain the estimate
	\begin{align*}
		&\|\grad\varphi_u(t) - \grad\varphi_\u(t)\|_{L^2}^2 
		\le 2 \int_0^t \|\del_s\varphi_u(s) - \del_s\varphi_\u(s)\|_{L^2}\,
			\|\laplace \varphi_u(s) - \laplace\varphi_\u(s)\|_{L^2} \ds\\
		&\quad \le 2\sqrt{t}\,\|\varphi_u - \varphi_\u\|_{H^1(L^2)}\, 
			\|\varphi_u - 	\varphi_\u\|_{L^\infty(H^2)}
		\le 2 L_1^2 \sqrt{t}\, \|u-\u\|_{L^2(L^2)}^2.
	\end{align*}
	Consequently, we have
	\begin{align}
		\label{EQ:UNIQ1}
		\|\varphi_u - \varphi_\u\|_{L^2(H^1)} 
			\le \frac{2\sqrt{2} }{\sqrt{3}}\, L_1\, T^{3/4}\,\|u-\u\|_{L^2(L^2)}.
	\end{align}
	In the same fashion, we derive the estimate
	\begin{align*}
	&\|\phi_u(t) - \phi_\u(t)\|_{L^2}^2 
		= 2 \int_t^T \langle \del_s\phi_u(s) - \del_s\phi_\u(s) \,{,}\, \phi_u(s) - \phi_\u(s) \rangle_{H^1} \ds\\
	&\quad \le 2 \sqrt{T-t}\, \|\phi_u - \phi_\u\|_{H^1((H^1)^*)}\, \|\phi_u - \phi_\u\|_{L^\infty(H^1)}
	\le 2 L_3^2 \sqrt{T-t}\, \|u-\u\|_{L^2(L^2)}^2
	\end{align*}
	and thus,
	\begin{align}
	\label{EQ:UNIQ2}
		\|\phi_u - \phi_\u\|_{L^2(L^2)} 
			\le \frac{2}{\sqrt{3}}\, L_3\, T^{3/4}\,\|u-\u\|_{L^2(L^2)}.
	\end{align}
	Furthermore, we know from Corollary \ref{COR:PRF} that both $u$ and $\u$ satisfy the projection formula \eqref{PRF}. A straightforward computation yields
	\begin{align*}
	|u(x,t)-\u(x,t)| 
	\le \frac 1 \kappa \|\h\|_{L^\infty(\R)} |\phi_u(x,t)-\phi_\u(x,t)|
	+ |\phi_\u(x,t)|\, \|\h'\|_{L^\infty(\R)} |\varphi_u(x,t)-\varphi_\u(x,t)|
	\end{align*}
	for almost all $(x,t)\in\OmegaT$. Recall that $\|\h\|_{L^\infty(\R)}\le 1$. Hence, using \eqref{EQ:UNIQ1} and \eqref{EQ:UNIQ2}, we conclude that
	\begin{align*}
	\norm{u-\u}_{L^2(L^2)} 
	&\le \frac 1 \kappa \|\phi_u-\phi_\u\|_{L^2(L^2)} 
		+ \frac 1 \kappa  \|\phi_\u\|_{L^\infty(L^p)}\, \|\h'\|_{L^\infty(\R)} \, \|\varphi_u-\varphi_\u\|_{L^2(L^q)} \\
	&\le \frac 1 \kappa \|\phi_u-\phi_\u\|_{L^2(L^2)} 
		+ \frac {1} \kappa c_\Omega(p) \|\phi_\u\|_{L^\infty(H^1)}\, \|\h'\|_{L^\infty(\R)} \, c_\Omega(q)\|\varphi_u-\varphi_\u\|_{L^2(H^1)} \\
	&\le \frac{2 }{\sqrt{3}\kappa} \; T^{3/4}\; 
		\Big(  L_3  + \sqrt{2}  L_1  c_\Omega(p)\, c_\Omega(q) \|\phi_\u\|_{L^\infty(H^1)}\, \|\h'\|_{L^\infty(\R)} \Big)\, \|u-\u\|_{L^2(L^2)}.
	\end{align*}
	However, if \eqref{ASS:UNIQ} is satisfied, we have
	\begin{align*}
		\frac{2}{\sqrt{3}\kappa} \; T^{3/4}\; 
		\Big(  L_3  + \sqrt{2}  L_1  c_\Omega(p)\, c_\Omega(q) \|\phi_\u\|_{L^\infty(H^1)}\, \|\h'\|_{L^\infty(\R)} \Big) < 1.
	\end{align*}
	 Therefore, the above inequality can hold true only if $\|u-\u\|_{L^2(L^2)}=0$ which means uniqueness of the locally optimal control.
\end{proof}

\begin{corollary}
	Suppose that $T>0$ and $\kappa>0$ satisfy the assumption \eqref{ASS:UNIQ} of Theorem \ref{THM:UNILOC}. Then, there exists a unique globally optimal control $\u$ of problem \eqref{OCP}.\\[1ex]
	In this case, each of the equivalent necessary optimality conditions \eqref{VIQ} and \eqref{PRF} is a sufficient condition for global optimality.
\end{corollary}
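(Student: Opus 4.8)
The plan is to combine the existence result of Theorem \ref{THM:EXC} with the contraction mechanism underlying Theorem \ref{THM:UNILOC}. First I would note that Theorem \ref{THM:EXC} guarantees at least one globally optimal control $\u$, and that every globally optimal control is in particular locally optimal. Since the assumption \eqref{ASS:UNIQ} is in force, Theorem \ref{THM:UNILOC} tells us that there is exactly one locally optimal control. Consequently $\u$ is the unique globally optimal control, which settles the first assertion.

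For the second assertion I would revisit the proof of Theorem \ref{THM:UNILOC} and make explicit that it is really a statement about solutions of the projection formula \eqref{PRF}, not just about minimizers. Indeed, the only place where local optimality of the two competing controls entered that proof was the invocation of Corollary \ref{COR:PRF}, which yields the pointwise identity coming from \eqref{PRF}; the estimates \eqref{EQ:UNIQ1} and \eqref{EQ:UNIQ2} use solely the Lipschitz continuity of the control-to-state and control-to-costate operators from Corollary \ref{COR:LIP} and Proposition \ref{PROP:ADJ:LIP}, which hold for arbitrary controls in $\UR$. Therefore, if $u\in\U$ is any critical control, i.e.\ any control satisfying the variational inequality \eqref{VIQ} (equivalently the projection formula \eqref{PRF}), then running the same chain of inequalities with $u$ in place of the hypothetical second locally optimal control gives, under \eqref{ASS:UNIQ}, the estimate $\norm{u-\u}_{L^2(L^2)} < \norm{u-\u}_{L^2(L^2)}$ unless $\norm{u-\u}_{L^2(L^2)} = 0$. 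Hence $u = \u$, and since $\u$ is globally optimal, so is $u$; thus each of \eqref{VIQ} and \eqref{PRF} is a sufficient condition for global optimality.

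The only delicate point is precisely this reinterpretation of Theorem \ref{THM:UNILOC}: one must verify that its argument never uses that the two controls are minimizers beyond the fact that they solve \eqref{PRF}, so that the conclusion applies to every critical control and not merely to locally optimal ones. Once this observation is recorded, the corollary follows with no further computation; in particular one obtains that, in this regime, the first-order necessary conditions are automatically also sufficient.
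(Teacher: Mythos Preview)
Your proposal is correct and follows essentially the same approach as the paper's own proof: existence via Theorem \ref{THM:EXC}, then uniqueness of the locally (hence globally) optimal control via Theorem \ref{THM:UNILOC}, and finally the observation that the contraction argument in Theorem \ref{THM:UNILOC} uses local optimality only through the projection identity \eqref{PRF}, so that any critical control must coincide with $\u$. The paper states this last point rather tersely (``Because of Theorem \ref{THM:UNILOC} it is also the only control satisfying these conditions''), whereas you spell out explicitly which ingredients of that proof are needed; but the substance is the same.
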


\begin{proof}
	Theorem \ref{THM:EXC} ensures the existence of at least one globally optimal control $\u\in\U$. Of course, $\u$ is also locally optimal. Hence, since assumption \eqref{ASS:UNIQ} holds, Theorem \ref{THM:UNILOC} implies that $\u$ is the unique locally optimal control. It follows immediately that $\u$ is the unique globally optimal control.\\[1ex]
	Moreover, $\u$ satisfies the equivalent necessary optimality conditions \eqref{VIQ} and \eqref{PRF}. Because of Theorem \ref{THM:UNILOC} it is also the only control satisfying these conditions. Hence, \eqref{VIQ} (or \eqref{PRF}, respectively) is a sufficent condition for global optimality. 
\end{proof}

\section*{Acknowledgements}

Matthias Ebenbeck was supported by the RTG 2339 ``Interfaces, Complex Structures, and Singular Limits" of the
German Science Foundation (DFG). The support is gratefully acknowledged.

%%%%%%%%%%%%%%%%%%%%%%%%%%%%%%%%%%%%%%%%%%%%%%%%%%%%%%%%%%%%%%%%%%%%%%%%%%%%%%%%%%%%%%%%%%%%%%%%%%%%%%%%%%%%%%%%%%%%
%*******************************************************************************************************************
% END OF DOCUMENT
%*******************************************************************************************************************
%%%%%%%%%%%%%%%%%%%%%%%%%%%%%%%%%%%%%%%%%%%%%%%%%%%%%%%%%%%%%%%%%%%%%%%%%%%%%%%%%%%%%%%%%%%%%%%%%%%%%%%%%%%%%%%%%%%%

\bigskip

%%%%%%%%%%%%%%%%%%%%%%%%%%%%%%%%%%%%%%%%%%%%%%%%%%%%%%%%%%%%%%%%%%%%%%%%%%%%%%%%%%%%%%%%%%%%%%%%%%%%%%%%%%%%%%%%%%%%
%*******************************************************************************************************************
% REFERENCES
%*******************************************************************************************************************
%%%%%%%%%%%%%%%%%%%%%%%%%%%%%%%%%%%%%%%%%%%%%%%%%%%%%%%%%%%%%%%%%%%%%%%%%%%%%%%%%%%%%%%%%%%%%%%%%%%%%%%%%%%%%%%%%%%%

\footnotesize
%\nocite{*} % Show all bib entries - both cited and uncited; comment this line to view only cited bib entries;
\bibliographystyle{plain}
\bibliography{oc-chb-2}

%\clearpage

\end{document}